\definecolor{webgreen}{rgb}{0,.5,0}
\definecolor{webbrown}{rgb}{.6,0,0}
\definecolor{RoyalBlue}{cmyk}{1, 0.50, 0, 0}
\newcommand{\R}{{\mathbb R}}
\newcommand{\N}{{\mathbb N}}
\newcommand{\C}{{\mathbb C}}
\newcommand{\Z}{{\mathbb Z}}
\newcommand{\ic}{{\mathrm i}}
\newcommand{\dd}{{\mathrm d}}
\newcommand{\im}{\mathrm{Im}}
\newcommand{\re}{\mathrm{Re}}
\newcommand{\RS}{\boldsymbol{\mathfrak{R}}}
\newcommand{\bd}{\boldsymbol{\Delta}}
\newcommand{\ualpha}{\boldsymbol\upalpha}
\newcommand{\ubeta}{\boldsymbol\upbeta}
\newcommand{\z}	{{\boldsymbol z}}
\newcommand{\tr} {{\boldsymbol t}}
\newcommand{\s}	{{\boldsymbol s}}
\newcommand{\qandq}{\quad \text{and} \quad}
\newcommand{\qasq}{\quad \text{as} \quad }
\newcommand{\rhy}   {\textnormal{RHP}-${\boldsymbol Y}$}
\newcommand{\rhx}   {\textnormal{RHP}-${\boldsymbol X}$}
\newcommand{\rhn}   {\textnormal{RHP}-${\boldsymbol N}$}
\newcommand{\rhz}   {\textnormal{RHP}-${\boldsymbol Z}$}
\newcommand{\rhpi}   {\textnormal{RHP}-${\boldsymbol P_{a_i}}$}
\newcommand{\rhpo}   {\textnormal{RHP}-${\boldsymbol P}_0$}
\newcommand{\rhpsiS}   {\textnormal{RHP}-${\boldsymbol \Psi_{s_1, s_2}}$}
 \newtheorem{thm}{Theorem}[section]
 \newtheorem{lem}[thm]{Lemma}
 \newtheorem{prop}[thm]{Proposition}
 \theoremstyle{definition}
 \newtheorem*{defn}{Definition}
 \theoremstyle{remark}
 \numberwithin{equation}{section}
\begin{document}

%
%
%
%
%
%
%
%
%

\title[Jacobi-type Polynomials on a Cross]
 {Asymptotics of Polynomials Orthogonal on a Cross with a Jacobi-type Weight}

\author{Ahmad Barhoumi}

\address{Department of Mathematical Sciences\\
 Indiana University-Purdue University Indianapolis\\
 402~North Blackford Street, Indianapolis, IN 46202}

\email{\href{mailto:abarhoum@iupui.edu}{abarhoum@iupui.edu}}

\author{Maxim L. Yattselev}
\address{Department of Mathematical Sciences\\
 Indiana University-Purdue University Indianapolis\\
 402~North Blackford Street, Indianapolis, IN 46202}

\email{\href{mailto:maxyatts@iupui.edu}{maxyatts@iupui.edu}}

\thanks{The research was supported in part by a grant from the Simons Foundation, CGM-354538.}

\subjclass{42C05, 41A20, 41A21}

\keywords{Non-Hermitian orthogonality, strong asymptotics, Pad\'e approximation, Riemann-Hilbert analysis}

\date{\today}

\begin{abstract}
We investigate asymptotic behavior of polynomials \( Q_n(z) \) satisfying non-Hermitian orthogonality relations
\[
\int_\Delta s^kQ_n(s)\rho(s)\dd s =0, \quad k\in\{0,\ldots,n-1\},
\]
where \( \Delta := [-a,a]\cup [-\ic b,\ic b] \), \( a,b>0 \), and \( \rho(s) \) is a Jacobi-type weight. 
\end{abstract}

\maketitle

\section{Introduction}

Let \( a,b> 0 \) be fixed. Set
\begin{equation}
\label{Delta}
\Delta := [-a,a]\cup [-\ic b,\ic b] \qandq \Delta^\circ := \Delta\setminus\{0,a_1,a_2,a_3,a_4\},
\end{equation}
where we put \( a_1 = -a_3 = a \) and  \( a_2=-a_4 = \ic b \). Denote by \( \Delta_i \) (resp. \( \Delta_i^\circ \)), \( i\in\{1,2,3,4\} \), the closed (resp. open) segment joining the origin and \( a_i \), which we orient towards the origin. In this work we are interested in strong asymptotics of polynomials \( Q_n(z) \), \( \deg(Q_n)\leq n \), satisfying orthogonality relations
\begin{equation}
\label{orthogonality}
\int_\Delta s^kQ_n(s)\rho(s)\dd s =0, \quad k\in\{0,\ldots,n-1\},
\end{equation}
where \( \Delta \) inherits its orientation from the segments \( \Delta_i \) and \( \rho(s) \) is a certain weight function on \( \Delta \). Orthogonality relations \eqref{orthogonality} are non-Hermitian as \( s^k \) is not conjugated. Hence, there are no a priori reasons to assume that \( \deg(Q_n) = n \). In what follows, we shall understand that \( Q_n(z) \) stands for the monic polynomial of minimal degree satisfying \eqref{orthogonality}. The weight functions we are interested in are holomorphic perturbations of the power functions. More precisely, we define the following nested sequence of classes of weights. 

\begin{defn}
Let \( \ell \) be a positive integer or infinity. We shall say that a function \( \rho(s) \) on \( \Delta \) belongs to the class \( \mathcal W_\ell \) if
\begin{itemize}
\item[(i)]  \( \rho_i(s):=\rho_{|\Delta_i^\circ}(s) \) factors as a product \(  \rho_i(s) = \rho_i^*(s)(s-a_i)^{\alpha_i} \), where the function \( \rho_i^*(z) \) is non-vanishing and holomorphic in some neighborhood of \( \Delta_i \), \( \alpha_i>-1 \), and \( (z-a_i)^{\alpha_i} \) is a branch holomorphic across \( \Delta\setminus\{a_i\} \), \( i\in\{1,2,3,4\} \);
\item[(ii)] the ratio \( (\rho_1\rho_3)(z)/(\rho_2\rho_4)(z) \) is constant in some neighborhood of the origin (notice that each \( \rho_i(s) \) extends holomorphically to a neighborhood of the origin by (i));
\item[(iii)] it holds that \(\rho_1(0) + \rho_2(0) + \rho_3(0) + \rho_4(0) = 0\);
\item[(iv)] the quantities \( \rho_i^{(l)}(0)/\rho_i(0) \), \( 0\leq l<\ell \), do not depend on \( i\in\{1,2,3,4\} \).
\end{itemize}
\end{defn}

Observe that conditions (ii) and (iii) say that one of the functions \( \rho_i(z) \) is fully determined by the other three. In particular, it must hold that
\[
\rho_4(z) = -(\rho_1+\rho_2+\rho_3)(0)(\rho_2/\rho_1\rho_3)(0)(\rho_1\rho_3/\rho_2)(z).
\]
Notice also that \( \mathcal W_{\ell_1}\subset \mathcal W_{\ell_2} \) whenever \( \ell_2<\ell_1 \) and that \( \rho(s)\in\mathcal W_\infty \) if and only if there exists a function \( F(z) \), holomorphic in some neighborhood of \( \Delta\setminus\{a_1,a_2,a_3,a_4\} \), such that  \( \rho_i(s)=c_iF_{|\Delta_i^\circ}(s) \) for some constants \( c_i \) that add up to zero.

Holomorphy of the weights \( \rho_i(z) \) allows one to deform \( \Delta \) in \eqref{orthogonality} to any cross-like contour consisting of four arcs connecting the points \( a_i \) to the origin (some central point if the weight add up to zero in a neighborhood of the origin). Hence, the following question arises: \emph{which contour do we expect to attract the zeros of the polynomials \( Q_n(z) \) as \( n\to\infty\)?} This fundamental question in the theory of non-Hermitian orthogonal polynomials was answered by Herbert Stahl in \cite{St85,St85b,St86}. It turns out that the attracting contour is essentially characterized by having the smallest logarithmic capacity among all continua containing \( \{ a_1,a_2,a_3,a_4 \} \). It is also known from the works \cite{Grot30,Lav30} that this contour must consist of the orthogonal critical trajectories of the quadratic differential
\begin{equation}
\label{differential}
\mathcal D(z)\dd z^2 = \frac{(z-b_1)(z-b_2)\dd z^2}{(z^2-a^2)(z^2+b^2)}
\end{equation}
for some uniquely determined constants \( b_1,b_2 \). It can be readily verified that \( \Delta \) is the desired contour and \( b_1=b_2=0 \). In fact, the work of Stahl not only supplies us with the attracting contour, but also tell us that \( \frac1{\pi\ic}\sqrt{\mathcal D(s)}_+\dd s \) is the limiting distribution of zeros of \( Q_n(z) \), where the subscript \( + \) stands for the trace on the positive side of \( \Delta \) (according to the chosen orientation). 

Strong asymptotics of the polynomials \( Q_n(z) \) was considered as part of a study in \cite{Y15} under much more restrictive assumption \( \rho(s)=h(s)/w_+(s) \), where \( h(z) \) is a holomorphic and non-vanishing function is some neighborhood of \( \Delta \) and \( w(z) \) is defined in \eqref{w} further below. It is also worth pointing out that if the points \( \{ a_1,a_2,a_3,a_4 \} \) do not form a  cross with two symmetries, then the points \( b_1,b_2 \) in \eqref{differential} are distinct and the corresponding minimal capacity contour consists of five arcs: one joining \( b_1 \) and \( b_2 \), two connecting \( b_1 \) to two points in \( \{ a_1,a_2,a_3,a_4 \} \), and two connecting \( b_2 \) to the other two points in \( \{ a_1,a_2,a_3,a_4 \} \). Non-Hermitian orthogonal polynomials on such a contour for a class of weights defined similarly to \( \mathcal W_1 \) are a particular example of polynomials studied in \cite{ApY15}.

This study is not only motivated by the authors' intrinsic interest in the behavior of non-Hermitian orthogonal polynomials, but also by the research program of determining the effective constants in the functional analog of Thue-Siegel-Roth theorem, see \cite{AptY16}  for a more detailed explanation. Briefly, let \( f(z)=\sum_{i=0}^\infty f_iz^{-i} \) be a convergent power series. The \( n \)-th diagonal Pad\'e approximant for \( f(z) \) is a rational function \( [n/n]_f(z) = p_n(z)/q_n(z) \) such that \(\deg(p_n),\deg(q_n)\leq n \) and
\[
(q_nf-p_n)(z) = \mathcal O\left(z^{-n-1}\right) \qasq z\to\infty.
\]
It is known that \( [n/n]_f(z) \) always exists and is unique even though there may be many pairs \( (p_n(z),q_n(z)) \) satisfying the above relation. It is customary then to write \( [n/n]_f(z) \) in the reduced form, that is, to take \( q_n(z) \) to be the solution of the smallest degree, which is also normalized to be monic.  If \( \deg(q_n)=n \) and \( (q_nf-p_n)(z) \sim z^{-n-m_n-1} \) at infinity, then \( (f-[n/n]_f)(z) \) vanishes there with order \( 2n+m_n+1 \). Kolchin \cite{Kolch59} conjectured that for a class of functions \( f(z) \), including algebraic ones, it holds that \( m_n\leq \epsilon n + C_{\epsilon,f} \) for any \( \epsilon>0 \) and some constant \(  C_{\epsilon,f}>0 \) that depends on \( \epsilon \) and \( f(z) \). This conjecture was proven in \cite{Uch61,ChudChud84} with various degrees of specificity about the constants. Around  1984, it was additionally conjectured by A.A.~Gonchar that for an algebraic function \( f(z) \) it must, in fact, hold that \( m_n \leq C_f \). Given an algebraic function \( f(z) \), determining the constant \( C_f \) is the overarching goal which the authors have their eyes on. More precisely, given a germ of an algebraic function \( f(z) \) at infinity, and assuming that \( f(z) \) has no polar singularities and the branching singularities have integrable order, this germ can be written as a Cauchy integral of its jump across Stahl's (minimal capacity) contour. The polynomials \( q_n(z) \) are then orthogonal with respect to the jump of the germ on this contour. Moreover, it also can be shown that \( q_{n+j}(z)=q_n(z) \) for \( j\leq m_n \). Thus, to determine the size of \( m_n \) one can study the strong asymptotic behavior of \( q_n(z) \). In \cite{ApY15}, a generic situation was considered when Stahl's contour does not have more than three Jordan arcs meeting at any point. In this case \( m_n\leq g \), where \( g \) is the genus of the Riemann surface corresponding to Stahl's contour. In the present work we consider the first non-generic model case when there is a point common to four Jordan arcs.

\section{Statement of Results}

The functions describing the asymptotics of the polynomials \( Q_n(z) \) are constructed in three steps, carried out in Sections~\ref{ssec:map}-\ref{ssec:theta}, and naturally defined on a Riemann surface corresponding to \( \Delta \) that is introduced in Section~\ref{ssec:surface}. The main results of this work are stated in Sections~\ref{ssec:asymp} and~\ref{ssec:pade}. A more detailed description of the material in Sections~\ref{ssec:surface}--\ref{ssec:theta} can be found in~\cite{Zver71}.

\subsection{Riemann Surface}
\label{ssec:surface}

Let \( \Delta=\cup_{i=1}^4\Delta_i \) be given by \eqref{Delta}. Set
\begin{equation}
\label{w}
w(z) := \sqrt{(z^2-a^2)(z^2+b^2)}, \quad z\in\C\setminus\Delta,
\end{equation}
to be the branch normalized so that \( w(z) = z^2 + \mathcal O(z) \) as \( z\to\infty \).  Denote by $\RS$ the Riemann surface of $w(z)$ realized as a two-sheeted ramified cover of $\overline\C$ constructed in the following manner. Two copies of $\overline\C$ are cut along each arc $\Delta_i$. These copies are glued together along the cuts in such a manner that the right (resp. left) side of the arc $\Delta_i$ belonging to the first copy, say $\RS^{(0)}$, is joined with the left (resp. right) side of the same arc $\Delta_i$ only belonging to the second copy, $\RS^{(1)}$. 
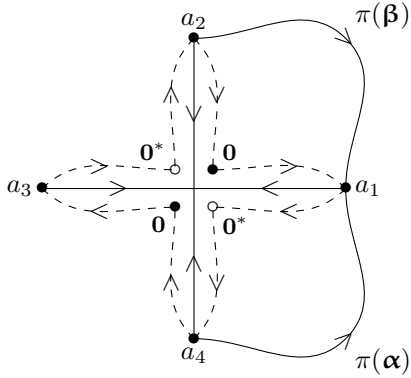
\begin{figure}[!ht]

\begin{tikzpicture}
\draw (2, 0) -- (-2,0);
\draw (0, 2) -- (0, -2);
\node[rotate = 90] at (0, 1){$<$};
\node[rotate = 90] at (0, -1){$>$};
\node at (1, 0){$<$};
\node at (-1, 0){$>$};
\node at (2, 0){\textbullet}; \node[right] at (2, 0){$a_1$};
\node at (-2, 0){\textbullet}; \node[left] at (-2, 0){$a_3$};
\node at (0, 2){\textbullet}; \node[above] at (0, 2){$a_2$};
\node at (0, -2){\textbullet}; \node[below] at (0, -2){$a_4$};
%
\draw (2, 0) to[out= 90, in = -45](2, 2) to[out = 135, in = 0] (0, 2); \node[rotate = 135] at (2, 2){$<$}; \node[above right] at (2, 2){$\pi(\ubeta)$};

\draw (2, 0) to[out= -90, in = 45](2, -2) to[out = -135, in = 0](0, -2); \node[rotate = 45] at (2, -2){$>$}; \node[below right] at (2, -2){$\pi(\ualpha)$};
%
\node at (0.25, 0.25){\textbullet}; \node[above right] at (0.25, 0.25){\small $\boldsymbol 0$};
\node at (-0.25, -0.25){\textbullet}; \node[below left] at (-0.25, -0.25){\small $\boldsymbol 0$}; 
\node at (0.25, -0.25){$\circ$}; \node[below right] at (0.25, -0.25){\small $\boldsymbol 0^*$};
\node at (-0.25, 0.25){$\circ$}; \node[above left] at (-0.25, 0.25){\small $\boldsymbol 0^*$};
%
\draw[style = dashed] (2, 0) to[out = 135, in = 0] (0.25, 0.25); \node at (1.25, 0.3){$>$};
\draw[style = dashed] (2, 0) to[out = -135, in = 0] (0.25, -0.25); \node at (1.25, -0.3){$<$};
\draw[style = dashed] (0, 2) to[out = -45, in = 90] (0.25, 0.25); \node[rotate = 90] at (0.3, 1.25){$<$};
\draw[style = dashed] (0, 2) to[out = -135, in = 90] (-0.25, 0.25); \node[rotate = 90] at (-0.3, 1.25){$>$};
\draw[style = dashed] (-2, 0) to[out = 45, in = 180] (-0.25, 0.25); \node at (-1.25, 0.3){$>$};
\draw[style = dashed] (-2, 0) to[out = -45, in = 180] (-0.25, -0.25); \node at (-1.25, -0.3){$<$};
\draw[style = dashed] (0, -2) to[out = 45, in = -90] (0.25, -0.25); \node[rotate = 90] at (0.3, -1.25){$<$};
\draw[style = dashed] (0, -2) to[out = 135, in = -90] (-0.25, -0.25); \node[rotate = 90] at (-0.3, -1.25){$>$};
\end{tikzpicture}
\caption{\small The arcs \( \Delta_i \) together with their orientation (solid lines), a schematic representation of the arcs \( \bd_i = \pi^{-1}(\Delta_i) \) (dashed lines) as viewed from \( \RS^{(0)} \), and the chosen homology basis \( \{\ualpha,\ubeta\} \) projected down from \( \RS^{(0)} \).}
\label{f:cross}
\end{figure}
We denote by $\pi$ the canonical projection $\pi:\RS\to\overline\C$ and define \( \bd:=\pi^{-1}(\Delta) \), \( \bd_i:=\pi^{-1}(\Delta_i) \), \( i\in\{1,2,3,4\} \). Then \( \bd \) is a curve on \( \RS \) that intersects itself exactly twice (once at each point on top of the origin), see Figures~\ref{f:cross} and~\ref{f:basis}. We orient \( \bd \) so that \( \RS^{(0)} \) remains on the left when $\bd$ is traversed in the positive direction. We shall denote by \( z^{(k)} \), \( k\in\{0,1\} \), the point on \( \RS^{(k)} \) with the canonical projection \( z \)  and designate the symbol $\cdot^*$ to stand for the conformal involution that sends $z^{(k)}$ into $z^{(1-k)}$, $k\in\{0,1\}$. We use bold lower case letters such as $\z,\tr,\s$ to indicate points on $\RS$ with the canonical projections $z,t,s$. Since \( \RS \) has genus \( 1 \), any homology basis on \( \RS \) consists of only two cycles. In what follows, we choose cycle \( \ualpha \) (resp. \(\ubeta \)) to be involution-symmetric and such that \( \pi(\ualpha) \) (resp. \(\pi(\ubeta) \)) is a rectifiable Jordan arc joining \( a_1 \) and \(a_2\) (resp. \( a_4 \) and \(a_1 \)), that belongs to fourth (resp. first) quadrant and does not intersect \( \Delta^\circ \cup \{0\} \), see Figure~\ref{f:cross}. We orient these cycles towards \( \boldsymbol a_1 \) on \( \RS^{(0)} \) and therefore away from \( \boldsymbol a_1 \) on \( \RS^{(1)} \), see Figure~\ref{f:basis}.

\begin{figure}[!ht]

\begin{tikzpicture}[scale = 1.2]
\fill[black!30] (0, 0) -- (-2, 1) -- (0, 2) -- (4, 2) -- (4, 0) -- (2, -1) -- cycle;
\fill[black!30] (-2, 1) -- (-4, 0) -- (-4, 2) -- cycle;
\fill[black!30] (2, -1) -- (0, -2) -- (4, -2) -- cycle;
%
\node at (4, 2){\textbullet};\node[above right] at (4, 2){$\boldsymbol a_1$};
\node at (4, -2){\textbullet};\node[below right] at (4, -2){$\boldsymbol a_1$};
\node at (-4, 2){\textbullet};\node[above left] at (-4, 2){$\boldsymbol a_1$};
\node at (-4, -2){\textbullet};\node[below left] at (-4, -2){$\boldsymbol a_1$};
\node at (-4, 0){\textbullet};
\node[left] at (-4, 0){$\boldsymbol a_2$};
\node at (4, 0){\textbullet};
\node[right] at (4, 0){$\boldsymbol a_2$};
\node at (0, 0){\textbullet }; \node [above] at (0, 0){$\boldsymbol a_3$};
\node at (0, 2){\textbullet}; \node [above] at (0, 2){$\boldsymbol a_4$};
\node at (0, -2){\textbullet}; \node [below] at (0, -2){$\boldsymbol a_4$};
\node at (-2, 1) {\textbullet}; \node [below] at (-2, 0.9){${\boldsymbol 0^*}$};
\node at (2, -1) {\textbullet}; \node [below] at (2, -1.1){${\boldsymbol 0}$};
\draw[line width = 0.4 mm] (-2, 1) -- (-4, 2); \node[rotate = 150] at (-3, 1.5){$<$};\node[above right] at (-3, 1.5){$\bd_1$};
\draw[line width = 0.4 mm] (4, -2) -- (2, -1); \node [rotate = 150] at (3, -1.5){$<$};\node [above right] at (3, -1.5){$\bd_1$};
\draw[line width = 0.4 mm] (-4, 0) -- (-2, 1); \node[rotate = 30] at (-3, 0.5){$<$}; \node[below right] at (-3, 0.5){$\bd_2$};
\draw[line width = 0.4 mm] (2, -1) -- (4, 0); \node [rotate = 30] at (3, -0.5){$<$};\node [above left] at (3, -0.5){$\bd_2$};
\draw[line width = 0.4 mm] (0,0) -- (-2, 1); \node[rotate = -30] at (-1, 0.5){$<$}; \node[above right] at (-1, 0.5){$\bd_3$};
\draw[line width = 0.4 mm] (2, -1) -- (0, 0); \node [rotate = -30] at (1, -0.5){$<$};\node [above right] at (1, -0.5){$\bd_3$};
\draw[line width = 0.4 mm] (-2, 1)--(0, 2); \node[rotate = 210] at (-1, 1.5){$<$}; \node[above left] at (-1, 1.5){$\bd_4$};
\draw[line width = 0.4 mm] (2, -1) -- (0, -2); \node [rotate = 210] at (1, -1.5){$<$};\node [above left] at (1, -1.5){$\bd_4$};
\draw (-4, -2) -- (4, -2); \node at (-2, -2){$<$};\node at (2, -2){$<$};
\node[below] at (-2, -2){$\ualpha$};\node[below] at (2, -2){$\ualpha$};
\draw (-4, 2) -- (4, 2); \node at (-2, 2){$<$};\node at (2, 2){$<$};
\node[above] at (-2, 2){$\ualpha$};\node[above] at (2, 2){$\ualpha$};
\draw (-4, -2) -- (-4, 2); \node[rotate = 90] at (-4, -1){$<$};\node[rotate = 90] at (-4, 1){$<$};
\node[left] at (-4, -1){$\ubeta$};\node[left] at (-4, 1){$\ubeta$};
\draw (4, -2) -- (4, 2); \node[rotate = 90] at (4, -1){$<$};\node[rotate = 90] at (4, 1){$<$};
\node[right] at (4, -1){$\ubeta$};\node[right] at (4, 1){$\ubeta$};
\node at (-2.5, -0.5){$\RS^{(0)}$};
\node at (2.5, 0.5){$\RS^{(1)}$};
\end{tikzpicture}
\caption{\small Schematic representation of the surface \( \RS \) (shaded region represents \( \RS^{(1)} \)), which topologically is a torus, the arcs \( \bd_1,\bd_2,\bd_3,\bd_4 \), and the homology basis \(\ualpha,\ubeta\).}
\label{f:basis}
\end{figure}

\subsection{Geometric Term}
\label{ssec:map}

The main goal of this subsection is to define the function \( \Phi(\z) \), see \eqref{Phi}, that will be responsible for the rate of growth of the polynomials \( Q_n(z) \) and is determined solely by the contour of orthogonality \( \Delta \).

With a slight abuse of notation, let us set
\[
w(\z) := (-1)^kw(z), \quad \z\in\RS^{(k)}\setminus\bd, \quad k\in\{0,1\},
\]
which we then extend by continuity to \( \bd \). Clearly, \( w(\z) \) is a meromorphic function on \( \RS \) with simple zeros at the ramification points of \( \RS \), double poles at \( \infty^{(0)} \) and \( \infty^{(1)} \), and otherwise non-vanishing and finite. Thus,
\begin{equation}
\label{Omega}
\Omega(\z) := \left(\oint_{\ualpha}\frac{\dd s}{w(\s)}\right)^{-1}\frac{\dd z}{w(\z)}
\end{equation}
is the holomorphic differential on \( \RS \) normalized to have unit period on \( \ualpha \). In this case it was shown by Riemann that the constant
\begin{equation}
\label{B}
\mathsf B := \oint_{\ubeta}\Omega
\end{equation}
has positive purely imaginary part. Further, since \( z/w(\z) \) has simple poles at the ramification point of \( \RS \), simple zeros at \( \infty^{(0)} \) and \( \infty^{(1)} \), and behaves like \( 1/z \) around \(\infty^{(0)} \), the differential
\[
G(\z) := \frac{z\dd z}{w(\z)}
\]
is meromorphic on \( \RS \) having two simple poles at \( \infty^{(1)} \) and \( \infty^{(0)} \) with respective residues \( 1 \) and \( -1 \). \( G(\z) \) is also distinguished by having a purely imaginary period on any cycle on \( \RS \). Indeed, it is enough to verify this claim on the cycles of any homology basis. To this end, define
\begin{equation}
\label{constants}
\omega:=-\frac1{2\pi\ic}\oint_{\ubeta}G \quad \mbox{and} \quad \tau:=\frac1{2\pi\ic}\oint_{\ualpha}G.
\end{equation}
By deforming $\ualpha$ (resp. $\ubeta$) into $-\bd_1-\bd_4$ (resp. $\bd_1+\bd_2$) and using the symmetry $G (\z^*) = - G(\z)$, one gets that 
\begin{equation}
\label{omtau12}
\omega = \tau = 
 \frac1{4\pi\ic}\oint_\Gamma\frac{z\dd z}{w(z)} = \dfrac{1}{2},
\end{equation}
where \( \Gamma \) is any positively oriented rectifiable Jordan curve encircling \( \Delta \), which does verify the claim about \( G(\z) \) having purely imaginary periods. Let
\begin{equation}
\label{Phi}
\Phi(\z):= \exp\left\{\int_{\boldsymbol a_3}^\z G\right\}, \quad \z\in\RS_{\ualpha,\ubeta}\setminus\big\{\infty^{(0)},\infty^{(1)}\big\},
\end{equation}
where \( \RS_{\ualpha,\ubeta}:=\RS\setminus\{\ualpha,\ubeta\} \) and the path of integration lies entirely in \( \RS_{\ualpha,\ubeta}\setminus\big\{\infty^{(0)},\infty^{(1)}\big\} \). The function $\Phi(\z)$ is holomorphic and non-vanishing on $\RS_{\ualpha,\ubeta}$ except for a simple pole at $\infty^{(0)}$ and a simple zero at $\infty^{(1)}$. Furthermore, it possesses continuous traces on both sides of each cycle of the canonical basis that satisfy\footnote{Here and in what follows we state jump relations understanding that they hold outside the points of self-intersection of the considered arcs.}
\begin{equation}
\label{Phi-jump}
\Phi_+(\s) = -\Phi_-(\s), \quad \s \in \ualpha \cup \ubeta,
\end{equation}
by \eqref{constants}--\eqref{omtau12}. It is not a difficult computation to check that \( \Phi(\z)\Phi(\z^*) \equiv 1 \) and
\begin{equation}
\label{Green}
\big|\Phi(\z)\big|=\exp\left\{(-1)^kg_\Delta(z;\infty)\right\}, \quad \z\in\RS^{(k)},
\end{equation}
\( k\in\{0,1\} \), where \( g_\Delta(z;\infty) \) is the Green function for \( \overline\C\setminus\Delta \) with pole at \( \infty \).\footnote{\( g_\Delta(z;\infty) \) is equal to zero on \( \Delta \), is positive and harmonic in \( \C\setminus\Delta \), and satisfies \( g(z;\infty)=\log|z|+\mathcal O(1) \) as \(z\to\infty\).} In fact, the above properties allow us to verify that
\begin{equation}
\label{PhiSquared}
\Phi^2\big(z^{(k)}\big) = \frac2{a^2+b^2}\left(z^2 + \frac{b^2-a^2}2 +(-1)^k w(z)\right),
\end{equation}
\( k\in\{0,1\} \). In particular, this implies that the logarithmic capacity of \( \Delta \) is equal to \( \sqrt{a^2+b^2}/2 \) since
\begin{equation}
\label{log-cap}
\Phi\big(z^{(0)}\big) = \frac{-2z}{\sqrt{a^2+b^2}} + \mathcal O(1) \quad \text{as} \quad z\to\infty
\end{equation}
(the sign in \eqref{log-cap} is determined by the fact that \( \Phi(\boldsymbol a_3)=1 \) and \( \Phi(\z) \) is non-vanishing on \( \pi^{-1}((-\infty,-a)) \)). Observe also that a calculus level computation tells us that
\begin{equation}
\label{K0-formula}
\Phi({\boldsymbol 0}) = \overline{\Phi(\boldsymbol 0^*)} = 
  \exp \left\{ \ic \arctan \left(\frac{a}{b}\right) \right \},
\end{equation}
where the point \( \boldsymbol 0 \) and \( \boldsymbol 0^* \) are defined as on Figure~\ref{f:cross}.

\subsection{Szeg\H{o} Function}
\label{ssec:szego}

It is  known since the work of Szeg\H{o} that the finer details of the asymptotics of \( Q_n(z) \) are captured by the so-called Szeg\H{o} function, which depends only on the weight of orthogonality. Below, we construct this function for  \( \rho(s)\in\mathcal W_1 \). 

Given \( i\in\{1,2,3,4\} \), fix \( \log\rho_i(s) \) to be a branch continuous on \( \Delta_i\setminus\{a_i\} \), selected so that
\begin{equation}
\label{nu}
\nu := \frac1{2\pi\ic}\sum_{i=1}^4(-1)^i\log\rho_i(0) \quad \text{satisfies} \quad \mathrm{Re}(\nu)\in\left(-\frac12,\frac12\right].
\end{equation}
Further, it can be readily verified that we can set
\begin{equation}
\label{logw}
\log w_+(s) = \log|w_+(s)| + (-1)^i\frac{\pi\ic}2, \quad s\in\Delta_i^\circ,
\end{equation}
where, as usual, \( w_+(s) \) is the trace of \eqref{w} on the positive side of \( \Delta_i^\circ\) according to the chosen orientation. We also let \( \log(\rho_i w_+)(s) \) to stand for \( \log\rho_i(s) + \log w_+(s) \) with the just selected branches. Put
\begin{equation}
\label{szego}
S_\rho(\z) := \exp\left\{-\dfrac{1}{4 \pi \ic}\oint_{\bd}\log(\rho w_+)(s)\Omega_{\z,\z^*}(\s)\right\},
\end{equation}
where \( \Omega_{\z,\z^*}(\s) \) is the meromorphic differential with two simple poles at $\z$ and $\z^*$ with respective residues \( 1 \) and $-1$ normalized to have zero period on $\ualpha$. When \( \z \) does not lie on top of the point at infinity, it can be readily verified that
\begin{equation}
\label{cauchy-kernel}
\Omega_{\z,\z^*}(\s) = \dfrac{w(\boldsymbol z)}{s - z} \dfrac{\dd s}{w(\s)} - \left(\oint_{\ualpha} \dfrac{w(\boldsymbol z)}{t - z}\dfrac{\dd t}{w(\tr)} \right)\Omega(\s),
\end{equation}
where \( \Omega(\s) \) is the holomorphic differential \eqref{Omega}.

\begin{prop}
\label{prop:szego}
Let \( \rho(s)\in \mathcal W_1 \) and \( S_\rho(\z) \) be given by \eqref{szego}. Define
\begin{equation}
\label{crho}
c_\rho := \frac1{2\pi\ic}\oint_{\bd}\log(\rho w_+)\Omega.
\end{equation}
Then \( S_\rho(\z) \) is a holomorphic and non-vanishing function in \( \RS\setminus\{\bd\cup\ualpha\} \) with continuous traces on \( (\bd\cup\ualpha)\setminus\{\boldsymbol a_1,\boldsymbol a_2,\boldsymbol a_3,\boldsymbol a_4,\boldsymbol 0,\boldsymbol 0^*\} \) that satisfy
\begin{equation}
\label{S-jump}
S_{\rho+}(\s) = S_{\rho-}(\s)\left\{
\begin{array}{rl}
\displaystyle \exp\big\{2\pi\ic c_\rho\big\}, &  \s\in\ualpha, \medskip \\
1/(\rho w_+)(s), & \s\in\bd.
\end{array}
\right.
\end{equation}
It also holds that \( S_\rho(\z)S_\rho(\z^*)\equiv1 \) and \footnote{In what follows we write \( |g_1(z)|\sim|g_2(z)| \) as \( z\to z_0 \) if there exists a constant \( C>1 \) such that \( C^{-1}|g_1(z)|\leq |g_2(z)| \leq C|g_1(z)| \) for all \( z \) close to \( z_0 \).}
\begin{equation}
\label{Srho-ai}
\big|S_\rho\big(z^{(0)}\big)\big| \sim \left\{ \begin{array}{rl}
|z-a_i|^{-(2\alpha_i+1)/4} & \text{as} \quad z\to a_i, \medskip \\
|z|^{(-1)^j \mathrm{Re}(\nu)} & \text{as} \quad \mathcal Q_j\ni z\to 0,
\end{array}
\right.
\end{equation}
for \( i,j\in\{1,2,3,4\} \), where \( \mathcal Q_j \) is the \( j \)-th quadrant and \( \nu \) is given by \eqref{nu}.
\end{prop}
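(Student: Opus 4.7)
The plan is to analyze $I(\z) := -4\pi\ic \log S_\rho(\z) = \oint_\bd f(\s) \Omega_{\z,\z^*}(\s)$, where $f := \log(\rho w_+)$, by substituting the explicit formula \eqref{cauchy-kernel}, which splits
\[
I(\z) = \oint_\bd \frac{f(\s)\,w(\z)\,ds}{(s-z)\,w(\s)} \;-\; 2\pi\ic\, c_\rho\, J(\z),
\qquad J(\z) := \oint_\ualpha \frac{w(\z)\,dt}{(t-z)\,w(\tr)},
\]
reducing all that follows to planar Cauchy-type integrals. Non-vanishing of $S_\rho$ is automatic, and holomorphy on $\RS\setminus(\bd\cup\ualpha)$ follows because the first term is $\z$-holomorphic off $\bd$ and the second off $\ualpha$; continuous traces on $(\bd\cup\ualpha)$ away from the special points come from Hölder-continuity of the densities. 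The involution symmetry $S_\rho(\z)S_\rho(\z^*)\equiv 1$ is immediate from $\Omega_{\z^*,\z} = -\Omega_{\z,\z^*}$, which forces $I(\z^*) = -I(\z)$.

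For the jump across $\bd$, only the first term contributes. At a non-special point $\s_0 \in \bd_i^\circ$, Plemelj-Sokhotski applied on the surface gives two contributions: the pole of $\Omega_{\z,\z^*}$ at $\s = \z$ (residue $+1$) crosses $\bd$ through $\s_0$, and \emph{simultaneously} the pole at $\s = \z^*$ (residue $-1$) crosses $\bd$ through the distinct point $\s_0^*$ in the reversed direction. The two contributions add coherently (the reversed direction cancels the reversed residue) to give $I_+ - I_- = 4\pi\ic f(s_0)$, whence $S_{\rho+}/S_{\rho-} = (\rho w_+)^{-1}$. For the jump across $\ualpha$, only $J(\z)$ contributes (since $\bd\cap\ualpha=\varnothing$). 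Unfolding $\oint_\ualpha$ using the stated orientation ($\ualpha$ toward $\boldsymbol a_1$ on $\RS^{(0)}$, away on $\RS^{(1)}$) yields $J(\z) = \pm 2 w(\z)\tilde J(z)$ on the two sheets, with $\tilde J(z) := \int_{\pi(\ualpha)} dt/((t-z)w(t))$. Planar Plemelj gives $\tilde J_+ - \tilde J_- = 2\pi\ic/w(z)$ across $\pi(\ualpha)$, and the factor $w(\z)$ cancels to produce a uniform $J_+ - J_- = 4\pi\ic$ on both sheets (once the orientation reversal is taken into account); combining with the outer $-1/(4\pi\ic)$ yields $\log(S_{\rho+}/S_{\rho-}) = 2\pi\ic c_\rho$.

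For the local behavior near $\boldsymbol a_i$, extract the singular part $(\alpha_i + \tfrac12)\log(s-a_i)$ of $f$, the $\tfrac12$ arising from $w_+(s)\sim C(s-a_i)^{1/2}$. Work in the branch-point uniformizer $\tau := \sqrt{\s - \boldsymbol a_i}$, in which $\bd$ becomes a straight segment through $\tau=0$ and the two poles of $\Omega_{\z,\z^*}$ at $\s=\z, \s=\z^*$ transfer to $\tau = \pm\tau_0$ with $\tau_0 := \sqrt{z-a_i}$. A partial-fractions calculation of the resulting local integral produces $2\pi\ic(\alpha_i + \tfrac12)\log(a_i - z) + O(1)$; division by $-4\pi\ic$ then gives $|S_\rho(z^{(0)})| \sim |z-a_i|^{-(2\alpha_i+1)/4}$.

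The main obstacle is the analysis near $\boldsymbol 0$ and $\boldsymbol 0^*$, where four arcs of $\bd$ meet. Since each $\rho_i$ is holomorphic at $0$ by condition (i), the integrand has no power-type singularity at the origin; the non-trivial behavior comes entirely from the discontinuities of the branches $\log\rho_i$ across the four arcs, chosen in accordance with \eqref{logw} and \eqref{nu}. As $z \to 0$ from the $j$-th quadrant, each Cauchy transform contributes a $\log z$-type term with a quadrant-dependent phase. Condition (iii) of $\mathcal W_1$ (which is precisely $\rho_1(0)+\rho_2(0)+\rho_3(0)+\rho_4(0)=0$) cancels the leading divergent constant in the sum, and the alternating combination $\nu$ of \eqref{nu} captures the surviving logarithmic monodromy, yielding the advertised $|z|^{(-1)^j \mathrm{Re}(\nu)}$ behavior. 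Executing this cleanly demands careful bookkeeping of the branches in \eqref{logw}, of the orientations of the four $\bd_i$'s meeting at $\boldsymbol 0$ and $\boldsymbol 0^*$, and of the quadrant of approach of $z$.
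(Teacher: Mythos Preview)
Your approach is essentially the paper's: both substitute the explicit kernel \eqref{cauchy-kernel} to reduce everything to planar Cauchy-type integrals (the paper writes this out as \eqref{szego-flat} with the auxiliary function $H(z)$ of \eqref{H}, which is your $\tilde J$), then apply Plemelj--Sokhotski for the jumps and standard endpoint estimates for the local behavior. Your involution-symmetry and jump arguments are correct; near $\boldsymbol a_i$ the paper simply cites Gakhov's endpoint formulas where you work in the uniformizer, but the content is the same.

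There is, however, a genuine error in your sketch of the origin analysis. You attribute the crucial cancellation to condition~(iii) of $\mathcal W_1$, namely $\sum_i \rho_i(0)=0$. That condition plays \emph{no role} here (it is used elsewhere, to make $\boldsymbol Y$ bounded at the origin in \hyperref[rhy]{\rhy}(c)). What actually happens is purely a matter of signs coming from $w$: the leading $\log z$ contribution from the Cauchy integral along $\Delta_i$ carries the factor $-w(z)/w_{|\Delta_i+}(0)$; since $w(z)\to (-1)^{j-1}\ic ab$ as $\mathcal Q_j\ni z\to 0$ while $w_{|\Delta_i+}(0)=(-1)^i\ic ab$ by \eqref{logw}, this ratio equals $(-1)^{j+i}$. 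Summing over $i$, the $\log|w_+(0)|$ pieces cancel by alternation of $(-1)^i$, and the remaining $\log\rho_i(0)$ pieces combine---by the very definition \eqref{nu}---into $(-1)^j\nu\log z$. The coefficients involved are $\log\rho_i(0)$, not $\rho_i(0)$, so $\sum_i\rho_i(0)=0$ cannot be the mechanism. Once you correct this attribution, your outline matches the paper's proof.
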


Proposition~\ref{prop:szego} is proved in Section~\ref{sec:5}.

\subsection{Theta Function}
\label{ssec:theta}

As it turns out, the product \( (S_\rho\Phi^n)(\z) \) is not sufficient to capture the strong asymptotics of the polynomials \( Q_n(z) \). What needs to be done now is to remove the jumps of this product from the cycles of the homology basis. This is done with the help of the functions \( T_k(\z) \), \( k\in\{0,1\} \), constructed further below in \eqref{T0T1}.

Let \( \mathsf{Jac}(\RS) := \C/ \{\Z+\mathsf B\Z\} \) be the Jacobi variety of \( \RS \), where \( \mathsf B \) is given by \eqref{B}. We shall represent elements of \( \mathsf{Jac}(\RS) \) as equivalence classes \( [s] = \{s+ l + \mathsf Bm:l,m\in \Z\} \), where \( s\in\C \). Since \( \RS \) has genus 1, Abel's map
\[
\z\in\RS \mapsto \left[\int_{\boldsymbol a_3}^\z\Omega\right]\in \mathsf{Jac}(\RS)
\]
is a holomorphic bijection. Hence, given any \( s \in \C \), there exists a unique \( \z_{[s]} \in\RS \) such that \( \left[\int_{\boldsymbol a_3}^{\z_{[s]}}\Omega\right] = [s] \).

Denote by \( \theta(\zeta) \) the Riemann theta function associated to \( \mathsf B \), i.e.,
\[
\theta(\zeta) := \sum_{n \in \Z} \exp \left \{ \pi \ic \mathsf B n^2 + 2\pi \ic n\zeta \right\}.
\]
As shown by Riemann, $\theta(\zeta)$ is an entire, even function that satisfies
\begin{equation}
\label{theta-periods}
\theta(\zeta + l+ m\mathsf B) = \theta(\zeta)\exp\{ -\pi \ic m^2\mathsf B -2\pi \ic m\zeta \}
\end{equation}
for any integers \( l,m \). Moreover, its zeros are simple and \( \theta\left(\zeta \right) = 0 \) if and only if  \( [\zeta] = [(1+\mathsf B)/2] \). The constant \( (1+\mathsf B)/2 \), known as the Riemann constant, will appear often in our computations. So, we choose to abbreviate the representatives of its ``half''-classes by
\begin{equation}
\label{Kpm}
\mathsf K_+ := (1+\mathsf B)/4 \qandq \mathsf K_- := (1-\mathsf B)/4,
\end{equation}
i.e., \([2\mathsf K_+]=[2\mathsf K_-]\). The symmetries of \( \Omega(\z) \) (\( \Omega(-\z)=-\Omega(\z) = \Omega(\z^*) \)) yield that
\begin{equation}
\label{infinite-integral}
\int_{\infty^{(1)}}^{\infty^{(0)}} \Omega = \frac12\int_{\boldsymbol\delta}\Omega = 2\mathsf K_+ \quad \Rightarrow \quad \int_{\boldsymbol a_3}^{\infty^{(k)}} = (-1)^k\mathsf K_+,
\end{equation}
\( k\in\{0,1\} \), where \( \boldsymbol\delta =\pi^{-1}\big((-\infty,-a]\cup[a,\infty)\big) \) is a cycle on \( \RS \) oriented from \( \infty^{(1)} \) to \( \infty^{(0)} \) (on Figure~\ref{f:basis}, \( \boldsymbol\delta \) would be represented by the anti-diagonal), which is clearly homologous to \( \ualpha+\ubeta \). 

With \( c_\rho \) given by \eqref{crho}, define
\begin{equation}
\label{T0T1}
T_k(\z) := \exp \left \{ \pi \ic k\int_{\boldsymbol a_3}^{\z} \Omega \right \}\dfrac{\theta \big( \int_{\boldsymbol a_3}^{\z} \Omega - c_\rho - (-1)^k\mathsf K_+ \big)}{\theta \big( \int_{\boldsymbol a_3}^{\z} \Omega - \mathsf K_+ \big)}
\end{equation}
for \( k\in\{0,1\} \) and \( \z\in\RS_{\ualpha,\ubeta} \), where the path of integration lies entirely within $\RS_{\ualpha, \ubeta}$. Each \( T_k(\z) \) is a meromorphic function that is finite and non-vanishing except for a simple pole at \( \infty^{(1)} \), see \eqref{infinite-integral}, and a simple zero at \( \z_k:=\z_{[c_\rho-(-1)^k\mathsf K_+]} \), where \( \z_k\in\RS \) is uniquely characterized by 
\begin{equation}
\label{lms}
\int_{\boldsymbol a_3}^{\z_k}\Omega = c_\rho -(-1)^k\mathsf K_+ + l_k +m_k\mathsf B,
\end{equation}
\( k\in\{0,1\} \), for some \( l_0,m_0,l_1,m_1\in\Z \). Furthermore, it follows from the normalization in \eqref{Omega}, the definition of \( \mathsf B \) in \eqref{B}, and \eqref{theta-periods} that
\begin{equation}
\label{T-jump}
T_{k+}(\s) = T_{k-}(\s)\left\{
\begin{array}{ll}
\displaystyle \exp\big\{2\pi \ic (k/2- c_{\rho}) \big\}, &  \s\in\ualpha, \medskip \\
\displaystyle \exp\big\{\pi \ic k \big\}, &  \s\in\ubeta.
\end{array}
\right.
\end{equation}


Now we are ready to define the function that will be responsible for the asymptotic behavior of the polynomials \( Q_n(z) \). Given \( \rho(s)\in\mathcal W_1 \), let \( c_\rho \) be defined by \eqref{crho}. Set
\[
\{0,1\}\ni \imath(n) := n\mod 2, \quad n\in\Z,
\]
to be the parity function. Then it follows from \eqref{Phi-jump}, \eqref{S-jump}, and \eqref{T-jump} that the function
\begin{equation}
\label{Psin}
\Psi_n(\z) := \big(\Phi^nS_\rho T_{\imath(n)} \big)(\z), \quad \z\in\RS\setminus\bd,
\end{equation}
is meromorphic in \( \RS\setminus\bd \) with a pole of order \( n \) at \( \infty^{(0)} \), a zero of multiplicity \( n-1 \) at \( \infty^{(1)} \), a simple zero at \( \z_{\imath(n)} \), and otherwise non-vanishing and finite, whose traces on \( \bd  \) satisfy
\begin{equation}
\label{Psin-jump}
\Psi_{n+}(\s) =\Psi_{n-}(\s)/(\rho w_+)(s), \quad \s\in\bd,
\end{equation}
and whose behavior around the ramification points of \( \RS \) as well as  \( {\boldsymbol 0^*},{\boldsymbol 0} \) is governed by \eqref{Srho-ai}. 

\subsection{Asymptotics}
\label{ssec:asymp}

In this section we formulate the main theorem on the behavior of the polynomials \( Q_n(z) \). As was alluded to in the introduction, we do not expect to be able to handle all the possible indices \( n \) as \( Q_n(s) \) might have degree smaller than \( n \). One source of this degeneration already can be seen from \eqref{Psin} since this function can have a pole of order \( n-1 \) at \( \infty^{(0)} \) when \( \z_{\imath(n)}=\infty^{(0)} \). In fact, this is the only reason for the degeneration in the generic cases described in \cite{ApY15}. However, this is no longer the case for the considered model.

To restrict the indices we need the following, unfortunately very technical, definition. Let us set
\begin{equation}
\label{sigma-o}
\varsigma_\nu := \left\{ \begin{array}{rl} 1, & \re(\nu)>0, \medskip \\-1, & \re(\nu)<0, \end{array}\right. \qandq \boldsymbol o := \left\{ \begin{array}{rl} \boldsymbol 0, & \re(\nu)>0, \medskip \\ \boldsymbol 0^*, & \re(\nu)<0. \end{array}\right.
\end{equation}
We do not make any choice for \( \varsigma_\nu \) and \( \boldsymbol o \) when \( \re(\nu)=0 \). Given \( \rho(s)\in\mathcal W_1 \) and the constant \( c_\rho \) from \eqref{crho}, define
\begin{equation}
\label{Arhon}
A_{\rho,n} := \left\{ \begin{array}{rl} \sigma_{\imath(n)}A_{\rho,n}^\prime\Phi(\z_{\imath(n)})\Phi^{2(n-1)}(\boldsymbol o), & \re(\nu)\neq 0, \medskip \\
0, & \re(\nu)=0, \end{array}\right.
\end{equation}
where \( \sigma_k := (-1)^{l_k+m_k+ k} \), \( k\in\{0,1\} \), see \eqref{lms}, and
\begin{multline*}
A_{\rho,n}^\prime := A_\rho e^{\pi\ic\varsigma_\nu( c_\rho+1/4)} \frac{\sqrt{a^2+b^2}}2\frac{\Gamma(1-\varsigma_\nu\nu)}{\sqrt{2\pi}}  \times \\  \left[\lim_{z\to0,\arg(z)=5\pi/4}|z|^{2\nu}S^2_\rho\big(z^{(0)}\big)\right]^{\varsigma_\nu}\left(\frac{ab}{2n}\right)^{1/2-\varsigma_\nu\nu},
\end{multline*}
and
\[
A_\rho := e^{\pi\ic\nu}\rho_3(0)\frac{(\rho_2+\rho_3)(0)}{\rho_2(0)} \quad \text{or} \quad A_\rho := \frac1{(ab)^2}\frac{(\rho_3+\rho_4)(0)}{(\rho_3\rho_4)(0)}
\]
depending on whether \( \re(\nu)>0 \) or \( \re(\nu)<0 \) (it follows from the last display in Section~\ref{sec:5}, devoted to the proof of Proposition~\ref{prop:szego}, that the limit in the definition of the constant \( A_{\rho,n}^\prime \) is indeed well defined). 

Given the above constants \( A_{\rho,n} \) and  \( \varepsilon\in(0,1/2) \), we define subsequences of allowable indices \( n \) for the weight \( \rho(s) \) by
\begin{equation}
\label{Nrhoepsilon}
\N_{\rho,\varepsilon} := \left\{n\in\N: \z_{\imath(n)}\neq\infty^{(0)}\;\text{and}\;\;|1-A_{\rho,n}|\geq\varepsilon\right\}.
\end{equation}
The following proposition states that such sequences are non-empty.

\begin{prop}
\label{prop:N}
Let \( \N_{\rho,\varepsilon} \) be given by \eqref{Nrhoepsilon}. If \( [c_\rho] =[0] \) or \( [c_\rho] =[(1+\mathsf B)/2] \), then it holds that
\begin{equation}
\label{N1}
\N_{\rho,\varepsilon} = \N_\rho := \left\{
\begin{array}{rll}
2\N & \text{when} & [c_\rho] =[0], \medskip \\
\N\setminus2\N & \text{when} & [c_\rho] = [(1+\mathsf B)/2].
\end{array}
\right.
\end{equation}
If \( [c_\rho] \neq [0] \) and \( [c_\rho] \neq  [(1+\mathsf B)/2] \) while \( \re(\nu)\in(-1/2,1/2) \), it holds that
\begin{equation}
\label{N2}
\N_{\rho,\varepsilon} =\N_\rho := \N. 
\end{equation}
If \( [c_\rho] \neq [0] \) and \( [c_\rho] \neq  [(1+\mathsf B)/2] \), and  \( \re(\nu)=1/2 \), then \( \N_{\rho,\varepsilon} \) is an infinite subsequence with gaps of size at most \( 2 \) (clearly, this is the only case when \( \N_{\rho,\varepsilon} \) might depend on \( \varepsilon \)).
\end{prop}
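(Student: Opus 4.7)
The plan is to dissect the two conditions defining \( \N_{\rho,\varepsilon} \) in \eqref{Nrhoepsilon}, the positional condition \( \z_{\imath(n)}\neq\infty^{(0)} \) and the size condition \( |1-A_{\rho,n}|\geq\varepsilon \), and to match each to the corresponding case. The first step is to determine when \( \z_k \) lies above \(\infty\). From \eqref{lms} together with \eqref{infinite-integral}, this happens exactly when \( c_\rho\equiv((-1)^j+(-1)^k)\mathsf K_+ \pmod{\Z+\mathsf B\Z} \), and injectivity of Abel's map on \( \mathsf{Jac}(\RS) \) yields: \( [c_\rho]=[(1+\mathsf B)/2] \) iff \( \z_k=\infty^{(k)} \); \( [c_\rho]=[0] \) iff \( \z_k=\infty^{(1-k)} \); otherwise neither \( \z_0 \) nor \( \z_1 \) lies above \( \infty \).

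For \eqref{N1}, take \( [c_\rho]=[0] \) (the other case is symmetric): for odd \( n \), the positional condition fails since \( \z_{\imath(n)}=\infty^{(0)} \); for even \( n \), \( \z_{\imath(n)}=\infty^{(1)} \) satisfies it, and since \( \Phi \) has a simple zero at \( \infty^{(1)} \), the formula \eqref{Arhon} forces \( A_{\rho,n}=0 \), whence \( |1-A_{\rho,n}|=1\geq\varepsilon \) automatically. So \( \N_{\rho,\varepsilon}=2\N \). For \eqref{N2}, neither \( \z_k \) lies above \( \infty \), so the positional condition holds for all \( n \), and \( |\Phi(\z_{\imath(n)})| \) is a positive finite parity-dependent constant. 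If \( \re(\nu)=0 \), then \( A_{\rho,n}\equiv 0 \) by definition and the size condition is immediate. If \( \re(\nu)\in(-1/2,1/2)\setminus\{0\} \), I would invoke \eqref{K0-formula} to conclude \( |\Phi^{2(n-1)}(\boldsymbol o)|=1 \), leaving \( (ab/(2n))^{1/2-\varsigma_\nu\re(\nu)} \) --- with real exponent in \( (0,1/2) \) by the definition of \( \varsigma_\nu \) --- as the only \( n \)-dependent factor of \( |A_{\rho,n}| \); hence \( |A_{\rho,n}|\to 0 \) and the size condition is satisfied for all sufficiently large \( n \), yielding the claimed equality \( \N_{\rho,\varepsilon}=\N \) up to finitely many exceptional indices.

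The remaining case \( \re(\nu)=1/2 \) is the principal technical obstacle. Here \( 1/2-\varsigma_\nu\nu=-\ic\varsigma_\nu\im(\nu) \) is purely imaginary, so \( (ab/(2n))^{1/2-\varsigma_\nu\nu} \) has unit modulus and \( |A_{\rho,n}|\to c_{\imath(n)} \) for positive constants \( c_0,c_1 \); the argument of \( A_{\rho,n} \) is driven by the unimodular rotation \( \Phi^{2(n-1)}(\boldsymbol o)=e^{\pm 2(n-1)\ic\arctan(a/b)} \) coming from \eqref{K0-formula}. To show gaps of at most \( 2 \), I would argue that among any three consecutive integers \( n,n+1,n+2 \), at least one lies in \( \N_{\rho,\varepsilon} \). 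Two of them, say \( n \) and \( n+2 \), share parity, and the asymptotic ratio
\[
\frac{A_{\rho,n+2}}{A_{\rho,n}}\longrightarrow e^{\pm 4\ic\arctan(a/b)}\neq 1,
\]
together with the triangle inequality, prevents \( A_{\rho,n} \) and \( A_{\rho,n+2} \) from being simultaneously within \( \varepsilon \) of \( 1 \) unless \( c_{\imath(n)}|\sin(2\arctan(a/b))|<\varepsilon \); combined with \( c_{\imath(n)}>1-\varepsilon \) (forced by \( |1-A_{\rho,n}|<\varepsilon \)), this bounds \( |\sin(2\arctan(a/b))|<\varepsilon/(1-\varepsilon) \). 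The hard residual subcase occurs when \( c_0,c_1 \) are both close to \( 1 \) and \( \arctan(a/b) \) is extremal; one must then invoke the opposite-parity ratio \( A_{\rho,n+1}/A_{\rho,n}\to(\sigma_1/\sigma_0)(\Phi(\z_1)/\Phi(\z_0))e^{\pm 2\ic\arctan(a/b)} \) and a geometric argument on the disk of radius \( \varepsilon \) around \( 1 \) to rule out simultaneous proximity of all three of \( A_{\rho,n},A_{\rho,n+1},A_{\rho,n+2} \) to \( 1 \). Making this ruling out uniform in \( n \) is the crux of the argument.
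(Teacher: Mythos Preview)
Your handling of \eqref{N1} and \eqref{N2} matches the paper's: reduce the positional condition to the dichotomy $[c_\rho]\in\{[0],[(1+\mathsf B)/2]\}$ via \eqref{lms} and \eqref{infinite-integral}, and for \eqref{N2} use $|\Phi(\boldsymbol o)|=1$ together with the decaying factor $n^{\varsigma_\nu\re(\nu)-1/2}$ to force $A_{\rho,n}\to 0$. Your caveat that this yields $\N_{\rho,\varepsilon}=\N$ only for large $n$ is fair; the paper glosses over it, but in the asymptotic context it is harmless.

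The divergence is in the case $\re(\nu)=1/2$. You set out to show that among any three consecutive integers at least one survives, and after the same-parity ratio $A_{\rho,n+2}/A_{\rho,n}\to e^{\pm 4i\arctan(a/b)}$ fails to close the argument on its own, you reach for the opposite-parity ratio $A_{\rho,n+1}/A_{\rho,n}$ and a geometric case analysis that you concede you cannot finish. The paper sidesteps this entirely by a dichotomy you never make: write
\[
A_{\rho,n}=B_{\rho,\imath(n)}\exp\big\{2(n-1)i\arctan(a/b)+i\,\im(\nu)\log n\big\},
\]
with $B_{\rho,\imath(n)}$ a parity-dependent constant of fixed modulus. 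If $|B_{\rho,\imath(n)}|\neq 1$, then $|A_{\rho,n}|$ is a fixed positive distance from $1$ and the size condition holds for every $n$ of that parity. If $|B_{\rho,\imath(n)}|=1$, then each $A_{\rho,n}$ lies on the unit circle and the same-parity ratio is asymptotically a rotation by the \emph{fixed} nonzero angle $4\arctan(a/b)\in(0,2\pi)$; two unimodular numbers related by such a rotation cannot both lie in the $\varepsilon$-disc about $1$ once $\varepsilon$ is small relative to the fixed data $a,b$. Either way, among $n$ and $n+2$ at least one survives, and that already forces gaps of size at most $2$. No third index, no opposite-parity comparison, no ``hard residual subcase''. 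The missing ingredient in your proposal is precisely this split on $|B_{\rho,\imath(n)}|$.
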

\begin{proof}
It readily follows from \eqref{lms} and \eqref{infinite-integral}  that
\[
[c_\rho] = [k(1+\mathsf B)/2] \quad \Leftrightarrow \quad \z_1 = \infty^{(k)} \quad \Leftrightarrow \quad \z_0=\infty^{(1-k)}
\]
for \( k\in\{0,1\} \) (in which case \( \Phi(\z_{\imath(n)})=\Phi\big(\infty^{(1)}\big)=0=A_{\rho,n}\)). On the other hand, because Abel's map is a bijection, we also get that $|\pi(\z_1)| < \infty \Leftrightarrow |\pi(\z_0)| < \infty$. This proves \eqref{N1}. Observe that
\begin{equation}
\label{Arhon-0}
A_{\rho,n} = B_{\rho,\imath(n)}\Phi(\boldsymbol o)^{2(n-1)}n^{\varsigma_\nu\nu-1/2},
\end{equation}
where \( B_{\rho,\imath(n)} \) depends only on the parity of \( n \) and \( |\Phi(\boldsymbol o)|=1 \) by \eqref{K0-formula}. Hence,  \( A_{\rho,n}\to 0 \) as \( n\to\infty \) when  \( \re(\nu)\in(-1/2,1/2) \), which proves \eqref{N2}. In the remaining situation,
\[
A_{\rho,n} = B_{\rho,\imath(n)}\exp\big\{2(n-1)\ic\arctan(a/b)+\ic\im(\nu)\log n\big\}
\]
by \eqref{K0-formula}. If \( |B_{\imath(n)}|\neq 1 \), then, in fact, \( \N_{\rho,\varepsilon} = \N \). Otherwise, we have that
\[
A_{\rho,n+2}/A_{\rho,n} = \exp\big\{2\ic\arctan(a/b)+\ic\im(\nu)\log(1+ 2/n)\big\}.
\]
As \( \arctan(a/b)\in(0,\pi/2) \) and \( \log(1+2/n)=o(1) \), both constants \( A_{\rho,n+2} \) and \( A_{\rho,n} \) cannot be simultaneously close to 1.
\end{proof}

When \( \re(\nu)<1/2 \), the sequence \( \N_{\rho,\epsilon} = \N_\rho \) is equal to the whole set of the natural numbers or consists of every other one. This is consistent with the explanation given at the beginning of the subsection and is supported by the examples in Sections~\ref{sec:3.1} and~\ref{sec:3.2} where two weights \( \rho(s) \) are provided for which \( Q_{2n}(z)=Q_{2n+1}(z) \). As mentioned before, this is a generic behavior observed in \cite{ApY15}. On the technical level this degeneration manifests itself as our inability to construct the ``global parametrix'', see Section~\ref{sec:6.3}, since we are no longer able to properly renormalize \( Q_n(z) \) by \( \Psi_n(z^{(0)}) \) when \( \z_{\imath(n)}=\infty^{(0)} \).

When \( \re(\nu)=1/2 \), new phenomenon occurs. The sequence \( \N_{\rho,\epsilon} \) can have gaps of size 2 depending on the behavior of the constants \( A_{n,\rho} \). This suggests that there might be indices \( n \) such that \( Q_n(z) = Q_{n+1}(z) = Q_{n+2}(z) \). Such a possibility can in fact occur, see Section~\ref{sec:3.3} for an example. On the technical level, the second condition in \eqref{Nrhoepsilon} appears in an attempt to match the behavior of \( Q_n(z) \) at the origin, that is, during the construction of the so-called ``local parametrix'', see Sections~\ref{sec:4.5} and~\ref{sec:5.2}, and manifests itself through the constants \( L_{ni} \), see \eqref{Lni}.

Recall that the weight \( \rho(s) \) defines two constants: \( \ell \), which says how well the restrictions of \( \rho(s) \) to different segments \( \Delta_i \) match each other at the origin, and \( \nu \), defined in \eqref{nu}. Our analysis does not allow us to handle all possible combinations of these constants. In what follows we assume that
\begin{equation}
\label{ell-nu}
|\re(\nu)| \in \left\{ 
\begin{array}{rcl}
~[0,\sqrt7/2-1)& \text{when} & \ell=1, \medskip \\
~[0,1/2) & \text{when} & \ell=2, \medskip \\
~[0,1/2] & \text{when} & \ell>3.
\end{array}
\right.
\end{equation}
This technical condition appears in the rate of decay of the error, which we quantify by the following exponent:
\begin{equation}
\label{dnuell}
d_{\nu,\ell} := \left\{ \begin{array}{rl}
\frac{(\frac12+|\re(\nu)|)(\ell-2|\re(\nu)|)}{\ell+1+2|\re(\nu)|}, & \ell\geq\frac{4|\re(\nu)|(1+|\re(\nu)|)}{1-2|\re(\nu)|}, \medskip \\
\frac{\ell(3-2|\re(\nu)|)-2|\re(\nu)|(3+2|\re(\nu)|)}{2(\ell+3+2|\re(\nu)|)}, & \text{otherwise},
\end{array}
\right.
\end{equation}
where we understand that \( d_{\nu,\infty}=1/2+|\re(\nu)|\). It is a straightforward computation to check that requiring positivity of the numerator of \( d_{\nu,\ell} \) in the second line of \eqref{dnuell} produces restriction \eqref{ell-nu}.  Observe also that \( d_{1/2,\ell} = \frac{\ell-2}{\ell+4} \). 

\begin{thm}
\label{thm:asymptotics}
Let \( \rho(s)\in \mathcal W_\ell \), where \( \ell \) is a positive integer or infinity. Define \( \nu \) by \eqref{nu} and assume that \eqref{ell-nu} is satisfied. Let \( \Psi_n(\z) \) be given by \eqref{Psin} and \( \N_{\rho,\varepsilon} \) be as in \eqref{Nrhoepsilon} for some  \(\varepsilon\in(0,1/2) \) fixed. Then it holds for all \( n\in\N_{\rho,\varepsilon} \) large enough that
\begin{equation}
\label{Qnasymp}
Q_n(z) = \gamma_n \big(1 + \upsilon_{n1}(z) \big)\Psi_n\big(z^{(0)} \big) +\gamma_n \upsilon_{n2}(z) \Psi_{n-1}\big( z^{(0)} \big)
\end{equation}
for $z\in\C\setminus\Delta$, where \( \gamma_n:=\lim_{z\to\infty}z^n\Psi_n^{-1}\big(z^{(0)}\big) \) is the normalizing constant;
\begin{multline}
\label{Qnasymp1}
Q_n(s) = \gamma_n\big(1 + \upsilon_{n1}(s) \big)\left(\Psi_{n+}^{(0)}(s) + \Psi_{n-}^{(0)}(s) \right) + \\ \gamma_n \upsilon_{n2}(s)\left(\Psi_{n-1+}^{(0)}(s) + \Psi_{n-1-}^{(0)}(s) \right)
\end{multline}
for \( s\in\Delta^\circ \), where \( \Psi_{n\pm}^{(0)}(s) \) are the traces of \( \Psi_n\big(z^{(0)} \big) \) on the positive and negative sides of \( \Delta \). The functions \( \upsilon_{ni}(z) \) are such that
\begin{equation}
\label{upsilons}
\upsilon_{ni}(\infty)=0 \qandq  \upsilon_{ni}(z) = L_{n,i}z^{-1} + \mathcal O\left(n^{-d_{\nu,\ell}}\right)
\end{equation}
where \( \mathcal O(\cdot) \) holds locally uniformly on \( \overline\C\setminus\Delta \) in \eqref{Qnasymp} and on \( \Delta^\circ \) in \eqref{Qnasymp1}, \( d_{\nu,\ell} \) was defined in \eqref{dnuell}, and \( L_{ni} \) are constants given by
\begin{equation}
\label{Lni}
L_{ni} = (-1)^{\imath(n)}\frac{A_{\rho,n}}{1-A_{\rho,n}}\left(-\frac{\Phi T_{\imath(n)}}{T_{\imath(n-1)}} \right)^{i-1}(\boldsymbol o)\frac{(T_0/T_1)(\boldsymbol o)}{(T_0/T_1)^\prime(\boldsymbol o)}
\end{equation}
when \( |\pi(\z_k)|<\infty \), \( i\in\{1,2\} \), where  \( \boldsymbol o \) was defined in \eqref{sigma-o} (when \( |\pi(\z_k)|=\infty \), the expressions for \( L_{ni} \) are even more cumbersome and therefore are omitted here).

\end{thm}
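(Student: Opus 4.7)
My proof follows the Deift-Zhou nonlinear steepest-descent analysis applied to the Fokas-Its-Kitaev problem \rhy\ for a $2\times 2$ matrix $\boldsymbol Y(z)$, holomorphic in $\C\setminus\Delta$, whose $(1,1)$-entry is $Q_n(z)$ and whose $(1,2)$-entry is the Cauchy integral $\frac{1}{2\pi\ic}\int_\Delta Q_n(s)\rho(s)(s-z)^{-1}\dd s$. The jump is $\boldsymbol Y_+ = \boldsymbol Y_-\bigl(\begin{smallmatrix} 1 & \rho \\ 0 & 1 \end{smallmatrix}\bigr)$ on $\Delta^\circ$, with $\boldsymbol Y(z)\,\mathrm{diag}(z^{-n},z^n) = I + O(1/z)$ at infinity and controlled endpoint behavior reflecting the exponents $\alpha_i$ and $\re(\nu)$ at the $a_i$ and at the origin. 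Solvability of \rhy\ is equivalent to $\deg Q_n=n$; the extraction at the end will read $Q_n$ off the $(1,1)$-entry.

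\textbf{Transformations.} I would carry out the standard chain $\boldsymbol Y\to\boldsymbol X\to\boldsymbol Z$: first, strip off the geometric growth using the function $\Phi$ of Subsection~\ref{ssec:map}, conjugating by $\mathrm{diag}\bigl(\Phi^{-n}(z^{(0)}),\Phi^{-n}(z^{(1)})\bigr)$ so that by \eqref{Phi-jump}, \eqref{Green} and \eqref{log-cap} the new matrix $\boldsymbol X$ is bounded at infinity and its jump on $\Delta$ admits a triangular factorization. Next, I would open lenses around each $\Delta_i^\circ$, pushing the off-diagonal triangular factors onto the lens boundaries where they become $I+O(e^{-cn})$ off any neighborhood of $\{0,a_1,a_2,a_3,a_4\}$ by \eqref{Green}. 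The remaining model problem \rhz\ has, up to exponentially small errors, a constant jump on $\Delta^\circ$ involving $\rho w_+$.

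\textbf{Parametrix.} The \emph{global parametrix} \rhn\ matching that jump is the matrix
\[
\boldsymbol N(z) := \begin{pmatrix} \Psi_n\bigl(z^{(0)}\bigr) & \Psi_n\bigl(z^{(1)}\bigr) \\ \Psi_{n-1}\bigl(z^{(0)}\bigr) & \Psi_{n-1}\bigl(z^{(1)}\bigr) \end{pmatrix},
\]
whose jumps follow from \eqref{Psin-jump}, \eqref{S-jump}, and \eqref{T-jump}, and which is finite and nondegenerate at $\infty^{(0)}$ precisely when $\z_{\imath(n)}\neq\infty^{(0)}$ — the first condition in \eqref{Nrhoepsilon}. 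Around each $a_i$ I would install the usual Bessel local parametrix \rhpi\ of exponent $\alpha_i>-1$ matched to $\boldsymbol N$ on a shrinking boundary. The genuinely new element is the \emph{origin parametrix} \rhpo: with four lens regions meeting at $0$, the zero-sum condition $\sum\rho_i(0)=0$ of part (iii) is exactly what turns the local weight into one solvable by a model \rhpsiS\ (or \rhpsiA\ when $\re(\nu)=0$) on an $n^{-1/2}$-disk. I expect the construction and solvability of this model, together with its matching to $\boldsymbol N$, to be the main obstacle; the parameter $\ell$ governs how many Taylor coefficients of $\rho$ at $0$ are uniform across the four arcs and hence the size of the matching discrepancy, producing the restriction \eqref{ell-nu} and the exponent \eqref{dnuell}.

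\textbf{Small-norm step and extraction.} With the full parametrix $\boldsymbol N^*$ in place, set $\boldsymbol R := \boldsymbol Z(\boldsymbol N^*)^{-1}$. The jumps of $\boldsymbol R$ live on the lens boundaries and on small circles around $\{0,a_1,a_2,a_3,a_4\}$ and, by the matching conditions, equal $I + O(n^{-d_{\nu,\ell}})$ uniformly, \emph{provided} the matching matrix at the origin is invertible; this is exactly the nondegeneracy $|1-A_{\rho,n}|\geq\varepsilon$ encoded in \eqref{Arhon}--\eqref{Nrhoepsilon}, since $A_{\rho,n}$ arises as the ``resonance'' coefficient between \rhpo\ and $\boldsymbol N$. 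Standard small-norm Riemann-Hilbert theory then yields $\boldsymbol R(z) = I + \boldsymbol R_1 z^{-1} + O(n^{-d_{\nu,\ell}})$ locally uniformly outside the jump contours, with $\boldsymbol R_1$ a contour integral dominated by its residue at $0$, which after computation matches the explicit formula \eqref{Lni} for the $L_{ni}$. Tracing back $\boldsymbol R \to \boldsymbol N^* \to \boldsymbol Z \to \boldsymbol X \to \boldsymbol Y$ and extracting the $(1,1)$-entry produces \eqref{Qnasymp} for $z\in\C\setminus\Delta$; on $\Delta^\circ$, summing the $+$ and $-$ boundary values of $\boldsymbol Y$ eliminates the Cauchy-transform contribution via the jump relation, giving \eqref{Qnasymp1}.
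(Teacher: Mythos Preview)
Your outline captures the Deift--Zhou architecture correctly, and the identification of \eqref{Psin-jump} as the reason $\Psi_n$, $\Psi_{n-1}$ build the global parametrix is right (though the second column of $\boldsymbol N$ must carry a factor $1/w(z)$ for the anti-diagonal jump to come out). There is, however, a genuine gap at the origin that your sketch does not bridge.

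The parabolic-cylinder parametrix \rhpo\ matches $\boldsymbol N$ only up to $\boldsymbol I + \boldsymbol{\mathcal O}\bigl((n\delta^2)^{|\re(\nu)|-1/2}\bigr)$ on $\partial U_\delta$, \emph{not} $\boldsymbol{\mathcal O}\bigl((n\delta^2)^{-1/2-|\re(\nu)|}\bigr)$, if you build $\boldsymbol E_0$ in the naive way. To get the stronger matching the paper modifies the global parametrix itself to $\boldsymbol M(z) = (\boldsymbol I + \boldsymbol L_\nu/z)\,\boldsymbol M^\star(z)$ with a specific rank-one nilpotent $\boldsymbol L_\nu$ chosen so that the leading subdominant term of \hyperref[rhpsiS]{\rhpsiS}(c) (the $(2\zeta)^{-1}d_\nu\boldsymbol A_\nu$ term, see \eqref{Psic-mod}) is absorbed. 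The existence of $\boldsymbol L_\nu$ requires $D_n := 1 - A_{\rho,n} \neq 0$, and \emph{this} is where the second condition in \eqref{Nrhoepsilon} enters---not as a vague ``invertibility of a matching matrix'' but as the explicit obstruction \eqref{Dnnot0}--\eqref{target}. Without $\boldsymbol L_\nu$ the case $\re(\nu)=1/2$ cannot be closed at all, for $|\re(\nu)|<1/2$ you get a worse error than $n^{-d_{\nu,\ell}}$, and you have no mechanism to produce the explicit constants $L_{ni}$ in \eqref{Lni}: they are precisely the first-row entries of $\boldsymbol L_\nu$, read off in \eqref{L}.

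Two further technical points your sketch elides. First, the origin disk is not of radius $n^{-1/2}$; when $\ell<\infty$ its radius $\delta_n$ is optimized (see \eqref{deltan}) to balance the matching error $(n\delta_n^2)^{-1/2-|\re(\nu)|}$ against the error $n^{|\re(\nu)|}\delta_n^\ell$ coming from the discrepancy \eqref{approximations} between the actual jump on $\tilde\Delta_i$ and the model one---this balancing is exactly what produces the exponent $d_{\nu,\ell}$ and the restriction \eqref{ell-nu}. Second, \rhpsiA\ is the Bessel model used at the endpoints $a_i$; at the origin one always uses \rhpsiS, regardless of whether $\re(\nu)=0$.
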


Notice that the behavior of the polynomials \( Q_n(z) \) is qualitatively different for \( \re(\nu)<1/2 \) and \( \re(\nu)=1/2 \) as the first summand in \eqref{upsilons} is decaying in the former case by \eqref{Arhon-0}, but does not decay in the latter.

Recall that the traces of \( \Phi(\z) \) are unimodular on \( \bd \), see \eqref{Green}. Since \( \Psi_n(\z) = (S_\rho T_{\imath(n)})(\z)\Phi^n(\z) \), it is exactly the sum of the terms \( \big(\Phi_+^{(0)}(s)\big)^n \) and \( \big(\Phi_-^{(0)}(s)\big)^n \) that creates oscillations describing the zeros of \( Q_n(z) \). Of course, since the traces of \( (S_\rho T_{\imath(n)})^{(0)}_{\pm}(s) \) are in general complex-valued, the zeros of \( Q_n(z) \) do not lie exactly on \( \Delta \). However, we do prove that \eqref{Qnasymp1} holds on compact subsets ``close'' to \( \Delta^\circ \), where \( \Psi_{n\pm}^{(0)}(s) \) are analytically continued from \( \Delta^\circ \) into the complex plane with the help of \eqref{Psin-jump}.

When \( \ell<\infty \), we cannot control the error functions \( \upsilon_{ni}(z) \) around the origin and therefore cannot describe the polynomials \( Q_n(z) \) there (however, we can extend \eqref{Qnasymp1} to hold on a sequence of compact subsets of \( \Delta^\circ \) that are allowed to approach the origin with a certain speed at the expense of worsening the rate of decay in the error estimates). When \( \ell=\infty \), we can provide an asymptotic formula for \( Q_n(z) \) around the origin, but due to its technical nature we placed it at the very end of the paper in Section~\ref{ssec:at0}.

Theorem~\ref{thm:asymptotics}, as well as Theorem~\ref{thm:pade} further below, is proved in Section~\ref{sec:6} with the derivation of some technical identities relegated to Section~\ref{sec:4}.

\subsection{Pad\'e Approximation}
\label{ssec:pade}

For an integrable weight \( \rho(s) \) on \( \Delta \) define
\begin{equation}
\label{hatrho}
\widehat\rho(z) := \frac1{2\pi\ic}\int_\Delta\frac{\rho(s)\dd s}{s-z}, \quad z\in\overline\C\setminus\Delta.
\end{equation}
In particular, it can be readily verified that the functions \( \sum_{i=1}^4 C_i\log(z-a_i) \) and \( \prod_{i=1}^4(z-a_i)^{\alpha_i} \), where the constants \( C_i \) add up to zero and the exponents \( -1<\alpha_i\not\in\Z \) add up to an integer, possess branches holomorphic off \( \Delta \) that can be represented by \eqref{hatrho} for certain weight functions in \( \mathcal W_\infty \) (the second function can be represented by \eqref{hatrho} up to an addition of a polynomial).

Given \( \widehat\rho(z) \), it follows from orthogonality relations \eqref{orthogonality} that
\begin{equation}
\label{linear-system}
R_n(z) := \displaystyle \frac1{2\pi\ic}\int_\Delta\frac{Q_n(s)\rho(s)\dd s}{s-z} = \mathcal O\big(z^{-n-1} \big) \qasq z\to\infty.
\end{equation}
Observe also that \( R_n(z) \) can be rewritten as
\[
R_n(z) = \big(Q_n\widehat\rho\big)(z) +  \frac1{2\pi\ic}\int_\Delta\frac{Q_n(s)-Q_n(z)}{s-z}\rho(s)\dd s = \big(Q_n\widehat\rho\big)(z) - P_n(z),
\]
where \( P_n(z) \) is a polynomial of degree at most \( n-1 \). The rational function \( (P_n/Q_n)(z) \) is called the \( n \)-th diagonal Pad\'e approximant of \( \widehat\rho(z) \). 

\begin{thm}
\label{thm:pade}
Let \( \widehat\rho(z) \) be given by \eqref{hatrho} and \( R_n(z) \) be defined by \eqref{linear-system}. In the setting of Theorem~\ref{thm:asymptotics},  it holds for all \( n\in\N_{\rho,\varepsilon} \) large enough that
\begin{equation}
\label{Rnasymp}
(wR_n)(z) = \gamma_n \big(1 + \upsilon_{n1}(z) \big) \Psi_n\big(z^{(1)}\big) +\gamma_n \upsilon_{n2}(z) \Psi_{n-1}\big(z^{(1)}\big)
\end{equation}
locally uniformly in $\overline\C\setminus\Delta$, where \( \upsilon_{ni}(z) \) are the same as in Theorem~\ref{thm:asymptotics}.
\end{thm}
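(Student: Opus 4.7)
The proof runs in parallel with, and as a direct consequence of, the Riemann-Hilbert analysis performed for Theorem~\ref{thm:asymptotics}. The starting point is the Fokas-Its-Kitaev \( 2\times 2 \) matrix
\[
Y(z) := \begin{pmatrix} Q_n(z) & R_n(z) \\ * & * \end{pmatrix},
\]
whose first row encodes precisely \( (Q_n,R_n) \) by \eqref{orthogonality} and \eqref{linear-system}, and whose second row is the standard auxiliary built from \( Q_{n-1} \) and \( R_{n-1} \). This \( Y \) solves a Riemann-Hilbert problem with jump \( \bigl(\begin{smallmatrix} 1 & \rho \\ 0 & 1 \end{smallmatrix}\bigr) \) across \( \Delta \) and normalization \( Y(z) \sim \mathrm{diag}(z^n,z^{-n}) \) at \( \infty \).

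The Deift-Zhou steepest-descent chain \( Y \mapsto T \mapsto S \mapsto X \) carried out in Section~\ref{sec:6} (\( g \)-function normalization using \( \Phi \); lens opening using the analyticity of \( \rho \); model approximation by a global parametrix \( N \); local parametrices near each \( \boldsymbol a_i \) and near \( \boldsymbol 0,\boldsymbol 0^* \)) terminates with an identity of the form \( X = E \cdot N \), where the error matrix \( E \) satisfies \( E(z) = I + \mathcal{O}(n^{-d_{\nu,\ell}}) \) uniformly on \( \overline\C \setminus \Delta \). The global parametrix \( N \) is built on \( \RS \): its first column is formed from values on sheet \( \RS^{(0)} \), and its second column from the corresponding values on \( \RS^{(1)} \) divided by \( w(z) \) (this division is exactly what converts the sheet-swap on \( \bd \) into the jump required by \eqref{Psin-jump} for \( N \) to live on \( \C\setminus\Delta \)). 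Concretely, the first row of \( N \) is proportional to \( \bigl(\Psi_n(z^{(0)}),\,\Psi_n(z^{(1)})/w(z)\bigr) \) and its second row to the analogous expression with \( \Psi_{n-1} \). Theorem~\ref{thm:asymptotics} is obtained by reading off the \( (1,1) \) entry of \( EN \) after undoing the transformations, setting \( E_{11}(z) = 1 + \upsilon_{n1}(z) \) and letting \( \upsilon_{n2}(z) \) be essentially \( E_{12}(z) \) up to a constant ratio of norming factors.

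The plan for Theorem~\ref{thm:pade} is then to read off the \( (1,2) \) entry of the same product \( EN \). Since \( E \) multiplies \( N \) from the left, the coefficients \( E_{11} = 1+\upsilon_{n1} \) and \( E_{12}\propto\upsilon_{n2} \) are identical to those used in the \( (1,1) \) calculation, so the same error functions appear and automatically inherit the estimates \eqref{upsilons}. The \( (1,2) \) entries of the first two rows of \( N \) contribute \( \Psi_n(z^{(1)})/w(z) \) and \( \Psi_{n-1}(z^{(1)})/w(z) \), yielding
\[
R_n(z) = \frac{\gamma_n}{w(z)}\Big[(1+\upsilon_{n1}(z))\Psi_n\bigl(z^{(1)}\bigr) + \upsilon_{n2}(z)\Psi_{n-1}\bigl(z^{(1)}\bigr)\Big],
\]
and multiplication by \( w(z) \) produces \eqref{Rnasymp}. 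All uniform-control statements transfer verbatim.

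The only point that needs verification beyond what has been established for Theorem~\ref{thm:asymptotics} is the compatibility at infinity: since \( \Psi_n \) has a pole of order \( n \) at \( \infty^{(0)} \) and a zero of order \( n-1 \) at \( \infty^{(1)} \), one has \( \Psi_n(z^{(1)})/w(z) = \mathcal{O}(z^{-n-1}) \), matching the decay \( R_n(z) = \mathcal{O}(z^{-n-1}) \) required by \eqref{linear-system}; this is a routine check against \eqref{Psin-jump} and the behavior of \( \Phi \) and \( T_{\imath(n)} \) at the points over infinity. The genuine hard work — construction of the parametrix \( N \), of the local parametrices at \( \boldsymbol 0 \) and \( \boldsymbol 0^* \) under the technical condition \eqref{ell-nu}, and estimation of \( E \) — has already been performed for Theorem~\ref{thm:asymptotics}, so Theorem~\ref{thm:pade} follows with essentially no additional effort beyond the bookkeeping of the second-column factor \( 1/w(z) \).
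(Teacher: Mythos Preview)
Your proposal is correct and follows essentially the same route as the paper: the proof of Theorem~\ref{thm:pade} in Section~\ref{sec:6} amounts to reading off the \((1,2)\) entry of \(\boldsymbol C\boldsymbol Z\boldsymbol M\boldsymbol D\) in place of the \((1,1)\) entry used for Theorem~\ref{thm:asymptotics}, with the same left factor supplying identical error functions \(\upsilon_{ni}\). One minor imprecision: in the paper \(1+\upsilon_{n1},\upsilon_{n2}\) are the first-row entries of \(\boldsymbol Z(z)(\boldsymbol I+\boldsymbol L_\nu/z)\), not of the bare error matrix \(\boldsymbol Z\) alone, since the \(\boldsymbol L_\nu\) modification of the global parametrix is what produces the leading \(L_{ni}z^{-1}\) term in~\eqref{upsilons}; but this nuance is already settled in the proof of Theorem~\ref{thm:asymptotics} and carries over unchanged.
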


Theorem~\ref{thm:pade} has the following consequences for Pad\'e approximation: it holds that
\begin{multline*}
\widehat\rho(z) - \frac{P_n(z)}{Q_n(z)} = \frac{R_n(z)}{Q_n(z)} = \frac1{w(z)(S_\rho^2\Phi^{2n-1})(z^{(0)})} \times \\ \frac{\big(1 + \upsilon_{n1}(z) \big)(\Phi T_{\imath(n)})(z^{(1)}) + \upsilon_{n2}(z)T_{\imath(n-1)}(z^{(1)})}{\big(1 + \upsilon_{n1}(z) \big) T_{\imath(n)}(z^{(0)}) + \upsilon_{n2}(z)(T_{\imath(n-1)}/\Phi)(z^{(0)})},
\end{multline*}
where we used \eqref{Psin} and the fact that \( S_\rho(\z)S_\rho(\z^*) \equiv 1 \) and \( \Phi(\z)\Phi(\z^*)\equiv 1 \). It follows from \eqref{Green} that the first fraction on the right-hand side of the equality above is geometrically small in \( \overline\C\setminus\Delta \) with the zero of order \( 2n+1 \) at infinity. However, if \( \z_{\imath(n)} \in \RS^{(0)}\setminus\bd \), the second fraction, and hence the Pad\'e approximant, will have a pole in the vicinity of  \( z_{\imath(n)} \) by Rouche's theorem, which will prevent the convergence around \( z_{\imath(n)} \). On the other hand, if  \( \z_{\imath(n)} \in \RS^{(1)}\setminus\bd \), then Rouche's theorem yields that the Pad\'e approximant has an additional interpolation point near \( \z_{\imath(n)} \).

\section{Examples}

In this section, we illustrate Theorem~\ref{thm:asymptotics} by three examples. In them, we shall not compute \( S_\rho(\z) \) and \( c_\rho \) via their integral representations, \eqref{szego} and \eqref{crho}, but rather construct a candidate \( \widehat S_\rho(\z) \) with the desired jump over \( \boldsymbol\Delta \) and the singular behavior as in \eqref{Srho-ai}. This construction will also determine a candidate constant \(  \widehat c_\rho \).  It is simple to argue that
\[
S_\rho(\z) = \widehat S_\rho(\z)\exp\left\{2\pi\ic m\int_{\boldsymbol a_3}^\z\Omega\right\}, \quad c_\rho=\widehat c_\rho -m\mathsf B,
\]
for some integer \( m \). Using \( \widehat c_\rho \) in \eqref{T0T1}, we then construct \( \widehat T_{\imath(n)}(\z) \) for which it holds that
\[
T_{\imath(n)}(\z) = \widehat T_{\imath(n)}(\z)\exp\left\{-2\pi\ic m\int_{\boldsymbol a_3}^\z\Omega-\pi\ic m^2\mathsf B + 2\pi\ic(-1)^{\imath(n)}\mathsf K_+\right\}
\]
with the same integer \( m \). This means that
\[
\big(S_\rho T_{\imath(n)}\big)(\z)/\big(S_\rho T_{\imath(n)}\big)\big(\infty^{(0)}\big) = \big(\widehat S_\rho \widehat T_{\imath(n)}\big)(\z)/\big(\widehat S_\rho \widehat T_{\imath(n)}\big)\big(\infty^{(0)}\big)
\]
and therefore \eqref{Qnasymp} and \eqref{Rnasymp} remain valid with \( S_\rho(\z) \), \( T_{\imath(n)}(\z) \) replaced by \( \widehat S_\rho(\z) \), \( \widehat  T_{\imath(n)}(\z) \). Furthermore, the value of \( A_{\rho,n} \) in \eqref{Arhon} will not change either as the limit in the definition of \( A_{\rho,n}^\prime \) will be augmented by \( e^{\pi\ic m(1-\mathsf B)} \), see \eqref{0-integral}, that will be offset by the change in \( c_\rho \) and \( \sigma_k \) (\( \widehat\sigma_k = (-1)^m\sigma_k \)). Thus, with a slight abuse of notation, we shall keep on writing \( S_\rho(\z) \), \( T_{\imath(n)}(\z) \) below.

\subsection{Chebysh\"ev-type case} 
\label{sec:3.1}

Let \(2\widehat\rho(z)=1/w(z) \), in which case it holds that
\[
\rho(s)=1/w_+(s), \quad s\in\Delta,
\]
where \( \widehat\rho(z) \) and \( w(z) \) were defined in \eqref{hatrho} and \eqref{w}, respectively, and the implication follows from the Plemelj-Sokhotski formulae and Privalov's theorem. Using analytic continuations of \( w(z) \) one can easily see that \( \rho(s)\in\mathcal W_\infty \) and \( \nu=0 \).  Since \( (\rho w_+)(s)\equiv1 \), we get that \( S_\rho(\z)\equiv 1 \) and necessarily \( c_\rho=0 \). Thus, \( \N_{\rho,\varepsilon} =2\N \) and \( \z_0=\infty^{(1)} \) (\( \z_1=\infty^{(0)}\)). Moreover, we get that \( T_0(\z)\equiv1 \) and \( T_1(\z) = 1/\Phi(\z) \), see \eqref{Phitheta}. Hence, it follows from \eqref{PhiSquared} and \eqref{Qnasymp} that
\[
Q_{2n}(z) = \frac{1+o(1)}{2^n}\left(z^2 + \frac{b^2-a^2}2 + w(z)\right)^n,
\]
where it holds that \( o(1) \) is geometrically small on closed subsets of \( \overline\C\setminus\Delta \) (see \cite{Y15} for the error rate in this case). To show that the above result is in a way best possible, assume that \( a=b=1 \). Recall that the \( n \)-th monic Chebysh\"ev polynomial of the first kind is defined by
\[
2^nT_n(z) = \left(z+\sqrt{z^2-1}\right)^n + \left(z-\sqrt{z^2-1}\right)^n
\]
and is orthogonal to \( x^j \), \( j\in\{0,\ldots,n-1\} \), on \( (-1,1) \) with respect to the weight \( 1/\sqrt{1-x^2} \). Hence,
\begin{multline*}
\ic\int_\Delta s^kT_n\big(s^2\big)\rho(s)\dd s  = \\ \left(\int_0^1-\int_{-1}^0\right)\frac{x^kT_n\big(x^2\big)\dd x}{\sqrt{1-x^4}} - \ic^{k+1}\left(\int_0^1-\int_{-1}^0\right)\frac{x^kT_n\big(-x^2\big)\dd x}{\sqrt{1-x^4}}.
\end{multline*}
Clearly, the above expression is zero for all even \( k \). Assume now that \( k=2j+1 \), \( j\in\{0,\ldots,n-1\} \). Then we can continue the above chain of equalities by
\[
\int_0^1 \frac{x^jT_n(x)\dd x}{\sqrt{1-x^2}} - (-1)^{j+1}\int_0^1\frac{x^jT_n(-x)\dd x}{\sqrt{1-x^2}} = \int_{-1}^1\frac{x^jT_n(x)\dd x}{\sqrt{1-x^2}} = 0,
\]
where the last equality follows from the orthogonality properties of the Chebysh\"ev polynomials. Thus, it holds that
\[
Q_{2n+1}(z) = Q_{2n}(z) = T_n\big(z^2\big)
\]
in this case, which justifies the exclusion of odd indices from \( \N_\rho=\N_{\rho,\varepsilon} \) as for such indices polynomials can and do degenerate.

\subsection{Legendre-type case}
\label{sec:3.2}

Let \( \widehat\rho(z) = \frac1{2\pi\ic}\big(\log(z^2-1)-\log(z^2+1)\big) \), in which case it holds that
\[
\rho(s)=(-1)^i, \quad s\in\Delta_i,
\]
\( i\in\{1,2,3,4\} \), where the justification for the implication is the same as before. As in the previous case, it holds that \( \nu=0 \). Let \( \sqrt w(z) \) be the branch holomorphic in \( \C\setminus\Delta \) such that \( \sqrt w(z) = z + \mathcal O(1) \) as \( z\to\infty \).  Further, let
\[
\Phi_*(z) := \sqrt{\frac2{a^2+b^2}}\left(z^2 + \frac{b^2-a^2}2 + w(z)\right)^{1/2},
\]
be the branch holomorphic in \( \C\setminus\Delta \) such that \( \Phi_*(z) = z + \mathcal O(1) \) as \( z\to\infty \). It easily follows from \eqref{Phi-jump}, \eqref{PhiSquared}, and \eqref{log-cap} that \( \Phi_*(z) \) is an analytic continuation of \( -\Phi\big(z^{(0)}\big) \) across \( \pi(\boldsymbol\alpha) \cup \pi(\boldsymbol\beta) \). It is now straightforward to check that
\[
S_\rho\big( z^{(0)} \big) = e^{-\pi\ic/4}\Phi_*(z)/\sqrt w(z)
\]
and thus \( c_\rho = 0 \). Hence, as in the previous subsection, \( \N_{\rho,\varepsilon}=2\N \) and \( T_0(\z)\equiv1 \) while \( T_1(\z) = 1/\Phi(\z) \). Therefore, we again deduce from \eqref{PhiSquared} and \eqref{Qnasymp} that
\[
Q_{2n}(z) = \frac{1+\mathcal O(n^{-1/2})}{2^{n+1/2}\sqrt w(z)}\left(z^2 + \frac{b^2-a^2}2 + w(z)\right)^{n+1/2},
\]
uniformly on closed subsets of \( \overline\C\setminus\Delta \). Again, to show that the above result is best possible, assume that \( a=b=1 \). Then we can check exactly as in the previous subsection that 
\[
Q_{2n+1}(z)=Q_{2n}(z) = L_n\big(z^2\big),
\]
where \( L_n(x) \) is the \( n \)-th monic Legendre polynomial, that is, degree \( n \) polynomial orthogonal to \( x^j \), \( j\in\{0,\ldots,n-1\} \), on \( (-1,1) \) with respect to a constant weight.

\subsection{Jacobi-\(1/4\) case}
\label{sec:3.3}

Let \( \sqrt2\widehat\rho(z)=1/\sqrt w(z) \), in which case it holds that
\[
\rho(s)=-\ic^{4-i}/|\sqrt w(s)|, \quad s\in\Delta_i, \quad  i\in\{1,2,3,4\},
\]
where \( \sqrt w(z) \) is the branch defined in the previous subsection. Observe that
\[
(\rho w_+)(s)=\ic^{i-1}|\sqrt w(s)|, \quad s\in\Delta_i,
\]
and that \( \nu=1/2 \). In particular, the constant \( A_\rho \) appearing in the definition of \( A_{\rho,n} \) in \eqref{Arhon} is equal to \( A_\rho = \sqrt2e^{-\pi\ic/4}/\sqrt{ab} \).

To construct a Szeg\H{o} function of \( \rho(s) \), let
\[
\Theta^2(\z) := \frac{\theta\big(\int_{\boldsymbol a_3}^\z\Omega + \mathsf K_-\big)}{\theta\big(\int_{\boldsymbol a_3}^\z\Omega - \mathsf K_-\big)} \frac{\theta\big(\int_{\boldsymbol a_3}^\z\Omega - \mathsf K_+\big)}{\theta\big(\int_{\boldsymbol a_3}^\z\Omega + \mathsf K_+\big)}, \quad z\in\RS_{\ualpha,\ubeta},
\]
where the path of integration lies entirely in \( \RS_{\ualpha,\ubeta} \). It follows from \eqref{infinite-integral} and \eqref{0-integral} further below that \( \Theta^2(\z) \) is a meromorphic function in \( \RS_{\ualpha,\ubeta} \) with two simple poles, namely, \( \infty^{(0)}, {\boldsymbol 0} \), and two simple zeros \( \infty^{(1)},{\boldsymbol 0^*} \). Moreover, \( \Theta^2(\z) \) is continuous across \( \ubeta \) and satisfies \( \Theta^2_+(\s)=\Theta^2_-(\s)e^{-2\pi\ic \mathsf B} \) on \( \ualpha \) by \eqref{theta-periods} and \( \Theta^2(\z)\Theta^2(\z^*) \equiv 1 \) by the symmetries of \( \theta(\zeta) \) and \( \Omega(\z) \). Since each individual fraction in the definition of \( \Theta^2(\z) \) is injective, we can define a branch \( \Theta(\z) \) such that 
\[
\Theta_+(\s) = \Theta_-(\s)\left\{
\begin{array}{ll}
e^{-\pi\ic\mathsf B}, & \s \in \ualpha, \medskip \\
-1,& \s\in \boldsymbol\Delta_3\cup\pi^{-1}((-\infty,-a]),
\end{array}
\right.
\]
and \( \Theta(\z)\Theta(\z^*) \equiv 1 \). Further, let \( w^{1/4}(z) \) be the branch holomorphic in \( \C\setminus\big(\Delta\cup(-\infty,a)\big) \) that is positive for \( z>a \). Now, one can verify that \( c_\rho=-\mathsf B/2 \) and
\[
S_\rho\big(z^{(k)} \big) = \Theta\big(z^{(k)}\big)w^{\frac{2k-1}4}(z), \quad k\in\{0,1\}.
\]

Let us now compute \( A_{\rho,n}^\prime \) appearing in \eqref{Arhon}. Since \( \sqrt w(z)\to e^{-3\pi\ic/4}\sqrt{ab} \) as  \(\mathcal Q_3\ni z\to 0 \), we get that
\begin{eqnarray*}
\lim_{z\to0,\arg(z)=5\pi/4}|z|S^2_\rho\big(z^{(0)}\big) &=& \frac{e^{-\pi\ic/2}}{\sqrt{ab}}\lim_{\mathcal Q_3\ni z\to0}z\Theta^2\big(z^{(0)}\big) \\
&=&  e^{\pi\ic\mathsf B/2}\frac{2\sqrt{ab}}{\sqrt{a^2+b^2}}\Phi({\boldsymbol 0}),
\end{eqnarray*}
where the second equality follows from \eqref{0-integral}, \eqref{Phi01}, \eqref{z-theta}, and \eqref{moduli} further below. Therefore, it holds that \( A_{\rho,n}^\prime = \Phi(\boldsymbol 0)\). It is easy to see from \eqref{0-integral} that \( \z_0 = {\boldsymbol 0} \), \( l_0=0,m_0=1 \), and \( \z_1={\boldsymbol 0^*} \), \( l_1=m_1=0 \). Therefore, \( \sigma_{\imath(n)}=-1 \) and the condition defining \( \N_{\rho,\varepsilon} \) in Proposition~\ref{prop:N} specializes to
\[
\left|1 + \exp\big\{ 2\ic(n-\imath(n))\arctan(a/b)\big\}\right|>\varepsilon
\] 
by \eqref{K0-formula} and since \( \Phi(\z_1)\Phi(\z_0)=1 \), see \eqref{ProdPhizk} further below. As \( T_0(\boldsymbol 0) =0 \) and respectively \( L_{n1}=0 \), we then get that \( Q_n(z) \), \( n\in\N_{\rho,\varepsilon} \), is equal to
\[
\gamma_n\big(S_\rho\Phi^n\big)\big(z^{(0)}\big)\left\{
\begin{array}{ll}
\big(T_0\big(z^{(0)}\big)+\mathcal O_\varepsilon(n^{-1})\big), & n\in2\N, \medskip \\
\big(T_1\big(z^{(0)}\big)+z^{-1}L_{n2}(T_0/\Phi)\big(z^{(0)}\big)+\mathcal O_\varepsilon(n^{-1})\big), & n\not\in2\N,
\end{array}
\right.
\]
uniformly on closed subsets of \( \overline\C\setminus\Delta \), where
\[
L_{n2} = \frac{-1}{(T_0/T_1)^\prime(\boldsymbol 0)}\frac{\Phi^{2n-1}(\boldsymbol 0)}{1+\Phi^{2(n-1)}(\boldsymbol 0)}
\]
for all odd \( n \).  When \( a=b \), we further get that \( L_{n2} = -e^{\pi\ic/4}/[2(T_0/T_1)^\prime(\boldsymbol 0)] \) for \( n\in\N_{\rho,\varepsilon} \) and
\[
\N_{\rho,\varepsilon} = \{ n=4k,4k+1:~k\in\N \}.
\]
Assume further that \( a=b=1 \) and let \( P_{n,1}(x) \) be the \( n \)-th degree monic polynomial orthogonal on \( [0,1] \) to \( x^j \), \( j\in\{0,\ldots,n-1 \} \), with respect to the weight function \( x^{-3/4}(1-x)^{-1/4} \). Then
\[
\int_\Delta s^kP_{n,1}\big(s^4\big)\rho(s)\dd s = \big(1+\ic^k\big)\int_{-1}^1 y^kP_{n,1}\big(y^4\big)\frac{\dd y}{(1-y^4)^{1/4}},
\]
which is equal to zero for all \( k \) odd by symmetry and for all \( k=4j+2 \) due to the factor \( 1+\ic^k \). When \( k=4j \), \( j\in\{0,\ldots,n-1\} \), we can further continue the above equality by
\[
4\int_0^1 y^{4j}P_{n,1}\big(y^4\big)\frac{\dd y}{(1-y^4)^{1/4}} = \int_0^1 x^jP_{n,1}(x)\frac{\dd x}{x^{3/4}(1-x)^{1/4}} = 0,
\]
where the last equality now holds by the very choice of \( P_{n,1}(z) \). Hence, it holds that
\[
Q_{4n}(z) = P_{n,1}\big(z^4\big) \qandq Q_{4n+1}(z) = Q_{4n+2}(z) = Q_{4n+3}(z) = zP_{n,2}\big(z^4\big),
\]
where the second set of relations can be shown similarly with \( P_{n,2}(x) \) being the \( n \)-th degree monic polynomial orthogonal on \( [0,1] \) to \( x^j \), \( j\in\{0,\ldots,n-1 \} \), with respect to the weight function \( x^{1/4}(1-x)^{-1/4} \). That is, the restriction to the sequence of indices \( \{ n=4k,4k+1:~k\in\N \} \) is not superfluous and the main term of the asymptotics of the polynomials does depend on the parity of \( n \).

\section{Auxiliary Identities}
\label{sec:4}

In this section we state a number of identities, some of which we have already used and some of which we shall use later.

\begin{lem}
Recall \eqref{Kpm}. It holds that
\begin{equation}
\label{0-integral}
\int_{\boldsymbol a_3}^{\boldsymbol 0}\Omega = -\mathsf K_- \qandq \int_{\boldsymbol a_3}^{\boldsymbol 0^*}\Omega = \mathsf K_-,
\end{equation}
where the path of integration lies entirely in \( \RS_{\ualpha,\ubeta} \).
\end{lem}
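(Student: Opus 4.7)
The proof rests on two involutions of $\RS$ whose pullbacks negate $\Omega$. The first is the sheet involution $\z\mapsto\z^*$, which fixes the branch points $\boldsymbol a_i$, swaps $\boldsymbol 0\leftrightarrow\boldsymbol 0^*$, and preserves $\RS_{\ualpha,\ubeta}$ as a set because $\ualpha,\ubeta$ are chosen involution-symmetric. Hence, if $\gamma:\boldsymbol a_3\to\boldsymbol 0$ is a path in $\RS_{\ualpha,\ubeta}$, then its pointwise image $\gamma^*:\boldsymbol a_3\to\boldsymbol 0^*$ also lies in $\RS_{\ualpha,\ubeta}$ and $\int_{\gamma^*}\Omega=-\int_\gamma\Omega$. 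This reduces the two identities in \eqref{0-integral} to one, so it suffices to prove $\int_{\boldsymbol a_3}^{\boldsymbol 0}\Omega=-\mathsf K_-$.

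The second involution, which I denote $T$, is the map $(z,w)\mapsto(-z,w)$ on $\RS$; it is well defined because $(z^2-a^2)(z^2+b^2)$ is even in $z$. One checks that $T^*\Omega=-\Omega$, that $T$ swaps $\boldsymbol a_1\leftrightarrow\boldsymbol a_3$ and $\boldsymbol a_2\leftrightarrow\boldsymbol a_4$, and that its fixed-point set is exactly $\{\boldsymbol 0,\boldsymbol 0^*,\infty^{(0)},\infty^{(1)}\}$. Writing $A(\z):=\int_{\boldsymbol a_3}^\z\Omega$, splitting any path through $\boldsymbol a_1=T(\boldsymbol a_3)$ and changing variables via $T$ yields $A(T\z)\equiv A(\boldsymbol a_1)-A(\z)\pmod{\Z+\mathsf B\Z}$. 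Specializing at any $T$-fixed point $\boldsymbol p$ gives $2A(\boldsymbol p)\equiv A(\boldsymbol a_1)$ modulo periods; plugging in $\boldsymbol p=\infty^{(0)}$ and invoking \eqref{infinite-integral} fixes $A(\boldsymbol a_1)\equiv 2\mathsf K_+$. Thus for each $T$-fixed point $\boldsymbol p$, $A(\boldsymbol p)$ is congruent to $\mathsf K_+$ shifted by one of the four half-periods, and these four shifted values reduce modulo $\Z+\mathsf B\Z$ to $\{\mathsf K_+,-\mathsf K_+,\mathsf K_-,-\mathsf K_-\}$.

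Since $A(\infty^{(k)})=(-1)^k\mathsf K_+$ accounts for two of these four slots, the remaining pair $\{A(\boldsymbol 0),A(\boldsymbol 0^*)\}$ equals $\{\mathsf K_-,-\mathsf K_-\}$ modulo periods, consistent with the identity $A(\boldsymbol 0^*)=-A(\boldsymbol 0)$ from the first step. The step that symmetry alone cannot resolve---and hence the main obstacle---is deciding which of $\pm\mathsf K_-$ corresponds to $A(\boldsymbol 0)$ for the actual path in $\RS_{\ualpha,\ubeta}$. I would settle the sign by explicit bookkeeping: take a concrete path $\gamma$ on $\RS^{(0)}$ from $\boldsymbol a_3$ through the third quadrant to $\boldsymbol 0$, form the chain consisting of $\gamma$ concatenated with the reverse of $T\gamma$ (which runs from $\boldsymbol a_3$ to $\boldsymbol a_1$ with $\Omega$-integral $2A(\boldsymbol 0)$), and compare it with the chain from $\boldsymbol a_3$ to $\boldsymbol a_1$ through $\infty^{(0)}$, whose $\Omega$-integral equals $2\mathsf K_+$ by \eqref{infinite-integral} and the $T$-symmetry. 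A careful reading of Figure~\ref{f:cross}---noting that $T$ does not preserve $\ualpha\cup\ubeta$ and that $T\gamma$ crosses $\ubeta$ once---identifies the difference loop as homologous to $-\ualpha$, yielding the period correction $-1$ and forcing $A(\boldsymbol 0)=\mathsf K_+-1/2=-\mathsf K_-$, whence $A(\boldsymbol 0^*)=\mathsf K_-$.
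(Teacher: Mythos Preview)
Your overall strategy is sound and the first two thirds are correct, but the argument is considerably more circuitous than the paper's, and your final sign determination is under-justified.

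The paper's proof shares your first step (the sheet involution gives $\int_{\boldsymbol a_3}^{\boldsymbol 0^*}\Omega=-\int_{\boldsymbol a_3}^{\boldsymbol 0}\Omega$), but then proceeds directly: using the \emph{same} symmetry you call $T$ (namely $\Omega(-\z)=-\Omega(\z)$) together with the sheet symmetry, it writes
\[
\int_{\boldsymbol a_3}^{\boldsymbol 0^*}\Omega=\tfrac12\int_{\bd_3}\Omega=\tfrac14\int_{\bd_3-\bd_1}\Omega,
\]
and then reads off from Figure~\ref{f:basis} that $\bd_3-\bd_1$ is homologous to $\ualpha-\ubeta$, giving $\tfrac14(1-\mathsf B)=\mathsf K_-$ exactly. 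No sign ambiguity ever arises, and the only homology identification needed is for a cycle that is already drawn on the surface.

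Your route through the half-period structure of the $T$-fixed locus is conceptually pleasant---it explains \emph{why} the answer must be one of $\{\pm\mathsf K_+,\pm\mathsf K_-\}$, and the elimination of $\pm\mathsf K_+$ via injectivity of Abel's map is clean. But it leaves you with a sign to resolve, and that is where the argument thins out. The difference loop you propose involves four arcs ($\gamma$, $T\gamma$, $\delta$, $T\delta$); you specify $\gamma$ but not $\delta$, and you account for crossings of $T\gamma$ with $\ubeta$ but say nothing about $T\delta$, which (if $\delta$ runs along $(-\infty,a_3]$, say) passes along $[a_1,\infty)$ and interacts with both $\ualpha$ and $\ubeta$ near $\boldsymbol a_1$. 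Moreover, a single transverse crossing with $\ubeta$ fixes only the $\ualpha$-coefficient of the homology class via intersection number; it does not by itself pin down the full class. The computation can certainly be made rigorous, but by the time you carry it out carefully you are doing a homology identification at least as delicate as the paper's one-line $\bd_3-\bd_1\sim\ualpha-\ubeta$, only on a messier loop. The direct approach avoids the detour entirely and delivers the exact value (not merely a congruence class) in one stroke.
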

\begin{proof}
Exactly as in the case of \eqref{infinite-integral}, the symmetries of \( \Omega(\z) \) imply that
\[
-\int_{\boldsymbol a_3}^{\boldsymbol 0}\Omega = \int_{\boldsymbol a_3}^{\boldsymbol 0^*}\Omega = \frac12\int_{\boldsymbol \Delta_3} \Omega = \frac14\int_{\boldsymbol \Delta_3-\boldsymbol \Delta_1} \Omega. 
\]
The claim now follows from the fact that \( \boldsymbol\Delta_3 - \boldsymbol \Delta_1 \) is homologous to \( \ualpha-\ubeta \).
\end{proof}

\begin{lem}
It holds that
\begin{equation}
\label{Phitheta}
\Phi(\z) = \exp \left \{ -\pi \ic \int_{\boldsymbol a_3}^{\z} \Omega \right \} \dfrac{\theta \big( \int_{\boldsymbol a_3}^{\z} \Omega - \mathsf K_+ \big)}{\theta \big( \int_{\boldsymbol a_3}^{\z} \Omega + \mathsf K_+ \big)}.
\end{equation}
\end{lem}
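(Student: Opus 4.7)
The plan is to show that both sides of \eqref{Phitheta} are meromorphic on $\RS$ with the same divisor and the same multiplicative jumps on $\ualpha\cup\ubeta$, so that their ratio is a constant which is pinned down by evaluation at $\boldsymbol a_3$. Denote the right-hand side by $\Phi_*(\z)$ and abbreviate $u(\z):=\int_{\boldsymbol a_3}^\z\Omega$.

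First I would locate the zeros and poles of $\Phi_*(\z)$. Since $\RS$ has genus one and Abel's map is a bijection, $\theta(u\pm\mathsf K_+)$ has a single simple zero, obtained by solving $[u\pm\mathsf K_+]=[2\mathsf K_+]$. Using \eqref{infinite-integral}, which gives $u(\infty^{(0)})=\mathsf K_+$ and $u(\infty^{(1)})=-\mathsf K_+$, the denominator vanishes precisely at $\infty^{(0)}$ and the numerator precisely at $\infty^{(1)}$. Since the exponential prefactor is non-zero, $\Phi_*$ is non-vanishing and finite on $\RS_{\ualpha,\ubeta}\setminus\{\infty^{(0)},\infty^{(1)}\}$, with a simple pole at $\infty^{(0)}$ and a simple zero at $\infty^{(1)}$, matching $\Phi$ by the discussion following \eqref{Phi}.

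Next I would verify the jump $\Phi_{*+}=-\Phi_{*-}$ on $\ualpha\cup\ubeta$, matching \eqref{Phi-jump}. Across $\ualpha$ the function $u(\z)$ changes by $\pm 1$, under which the theta functions are invariant by \eqref{theta-periods} while $\exp\{-\pi\ic u\}$ picks up the factor $-1$. Across $\ubeta$ the function $u(\z)$ changes by $\pm\mathsf B$; the exponential prefactor then produces $\exp\{\mp\pi\ic\mathsf B\}$, and \eqref{theta-periods} applied to the numerator and denominator separately yields an additional multiplicative factor of $\exp\{\pm 2\pi\ic((u+\mathsf K_+)-(u-\mathsf K_+))\}=\exp\{\pm 4\pi\ic\mathsf K_+\}=\exp\{\pm\pi\ic(1+\mathsf B)\}$, so that the total jump equals $-1$ once again. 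Hence $\Phi/\Phi_*$ extends continuously across $\ualpha\cup\ubeta$ and defines a meromorphic function on all of $\RS$ with no zeros or poles, therefore a constant by Liouville on a compact Riemann surface.

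Finally I would pin the constant by evaluating at $\z=\boldsymbol a_3$: the path of integration is trivial so $u(\boldsymbol a_3)=0$, and the exponential factors in both $\Phi$ (by definition \eqref{Phi}) and $\Phi_*$ reduce to $1$; evenness of $\theta$ yields $\theta(-\mathsf K_+)=\theta(\mathsf K_+)$, whence $\Phi_*(\boldsymbol a_3)=1=\Phi(\boldsymbol a_3)$, and \eqref{Phitheta} follows. The only subtle point is keeping the sign conventions consistent across the two cycles when tracking the period shifts, but since both sides have been shown to acquire the same factor $-1$ under each elementary period change, any consistent orientation choice produces the same conclusion.
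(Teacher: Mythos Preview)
Your argument is correct and follows essentially the same route as the paper: both verify that the right-hand side has a simple pole at $\infty^{(0)}$, a simple zero at $\infty^{(1)}$, satisfies the jump relations \eqref{Phi-jump}, and agrees with $\Phi$ at $\boldsymbol a_3$, so that the ratio is a holomorphic (hence constant, equal to $1$) function on $\RS$. You have simply supplied the explicit period computations and the evenness-of-$\theta$ step that the paper leaves implicit.
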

\begin{proof}
It follows from \eqref{infinite-integral} and \eqref{theta-periods} that the right hand side of \eqref{Phitheta} is a meromorphic functions with a simple pole at \( \infty^{(0)} \), a simple zero at \( \infty^{(1)} \), and otherwise non-vanishing and finite that satisfies \eqref{Phi-jump}. As only holomorphic functions on \( \RS \) are constants, the normalization  at \( \boldsymbol a_3 \) yields \eqref{Phitheta}. 
\end{proof}

\begin{lem}
Let \( l_0,l_1,m_0,m_1 \) be given by \eqref{lms}. Then it holds that
\begin{equation}
\label{Phizk}
\left\{
\begin{array}{ll}
\Phi(\z_0) & = (-1)^{l_0+m_0}e^{-\pi\ic(c_\rho-\mathsf K_+)}\theta(c_\rho+2\mathsf K_-)/\theta(c_\rho), \medskip \\
\Phi(\z_1) & = (-1)^{l_1+m_1}e^{-\pi\ic(c_\rho+\mathsf K_+)}\theta(c_\rho)/\theta(c_\rho+2\mathsf K_+).
\end{array}
\right.
\end{equation}
In particular, when \( |\pi(\z_k)|<\infty \), it holds that
\begin{equation}
\label{ProdPhizk}
\Phi(\z_0)\Phi(\z_1) = -(-1)^{l_0-l_1+m_0-m_1}.
\end{equation}
Moreover, we have that
\begin{equation}
\label{Phi01}
\Phi\big({\boldsymbol 0}\big) = e^{\pi\ic \mathsf K_-}\theta(1/2)/\theta(\mathsf B/2).
\end{equation}
\end{lem}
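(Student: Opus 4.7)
\smallskip

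The plan is to derive all three identities directly from the theta-function representation \eqref{Phitheta} together with the characterization \eqref{lms} of the points \( \z_k \), leaning on the quasi-periodicity \eqref{theta-periods} and the elementary identities
\[
2\mathsf K_+ + 2\mathsf K_- = 1, \qquad 2\mathsf K_+ - 2\mathsf K_- = \mathsf B, \qquad \mathsf K_+ - \mathsf K_- = \mathsf B/2
\]
that are immediate from \eqref{Kpm}.

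For the formulas in \eqref{Phizk}, the first step is to substitute the defining relation \( \int_{\boldsymbol a_3}^{\z_k}\Omega = c_\rho - (-1)^k\mathsf K_+ + l_k + m_k\mathsf B \) into \eqref{Phitheta}. In the exponential prefactor, the term \( -\pi\ic(l_k + m_k\mathsf B) \) produces \( (-1)^{l_k}\exp\{-\pi\ic m_k\mathsf B\} \). In the theta-function quotient, use \eqref{theta-periods} to strip off the shifts by \( l_k + m_k\mathsf B \) from both numerator and denominator; the numerator and denominator generate identical \( \exp\{-\pi\ic m_k^2\mathsf B\} \) factors that cancel, leaving a net factor \( \exp\{\pm 4\pi\ic m_k \mathsf K_+\} = \exp\{\pm\pi\ic m_k(1+\mathsf B)\} \) (plus for \( k=0 \), minus for \( k=1 \), because of which of \( \theta(c_\rho\mp 2\mathsf K_+) \) receives the shift). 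Combining with the \( \exp\{-\pi\ic m_k\mathsf B\} \) from the prefactor collapses to \( (-1)^{m_k} \). Finally, the identity \( -2\mathsf K_+ = 2\mathsf K_- - 1 \) combined with the trivial periodicity \( \theta(\zeta - 1) = \theta(\zeta) \) turns \( \theta(c_\rho - 2\mathsf K_+) \) into \( \theta(c_\rho + 2\mathsf K_-) \), yielding the two displayed formulas.

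For \eqref{ProdPhizk}, the direct product of the two expressions from \eqref{Phizk} gives
\[
\Phi(\z_0)\Phi(\z_1) = (-1)^{l_0+l_1+m_0+m_1}\, e^{-2\pi\ic c_\rho}\,\frac{\theta(c_\rho + 2\mathsf K_-)}{\theta(c_\rho + 2\mathsf K_+)}.
\]
Since \( (-1)^{l_0+l_1+m_0+m_1} = (-1)^{l_0-l_1+m_0-m_1} \), it only remains to show the product of the last two factors is \( -1 \). This follows by applying \eqref{theta-periods} with \( m=-1 \) to the identity \( \theta(c_\rho + 2\mathsf K_-) = \theta(c_\rho + 2\mathsf K_+ - \mathsf B) \): one gets the extra multiplier \( \exp\{-\pi\ic\mathsf B + 2\pi\ic(c_\rho + 2\mathsf K_+)\} \), and the substitution \( 4\pi\ic\mathsf K_+ = \pi\ic(1+\mathsf B) \) simplifies the overall exponent to \( \pi\ic \), giving \(-e^{2\pi\ic c_\rho}\) for the theta ratio, which exactly cancels \( e^{-2\pi\ic c_\rho} \) up to the sign \( -1 \).

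Finally, for \eqref{Phi01}, I evaluate \eqref{Phitheta} at \( \z = \boldsymbol 0 \) using \eqref{0-integral} to substitute \( \int_{\boldsymbol a_3}^{\boldsymbol 0}\Omega = -\mathsf K_- \). The exponential prefactor becomes \( e^{\pi\ic\mathsf K_-} \). For the theta quotient, the evenness of \( \theta \) (which is part of the Riemann theta function statement above \eqref{theta-periods}) converts \( \theta(-\mathsf K_- - \mathsf K_+) \) into \( \theta(\mathsf K_- + \mathsf K_+) = \theta(1/2) \) and \( \theta(-\mathsf K_- + \mathsf K_+) \) into \( \theta(\mathsf K_+ - \mathsf K_-) = \theta(\mathsf B/2) \), which is exactly the displayed formula. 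The only delicate point throughout is bookkeeping of the quasi-periodicity factors, but since all three identities reduce to plugging explicit values into \eqref{Phitheta} and applying \eqref{theta-periods} once or twice, the argument is entirely computational with no real obstacle.
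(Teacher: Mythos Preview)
Your approach is essentially identical to the paper's: direct substitution of \eqref{lms} and \eqref{0-integral} into the theta representation \eqref{Phitheta}, followed by applications of the quasi-periodicity \eqref{theta-periods}. One small slip in your bookkeeping: the net factor coming from stripping the \(l_k+m_k\mathsf B\) shift off the theta ratio is \(\exp\{+4\pi\ic m_k\mathsf K_+\}\) in \emph{both} cases \(k=0,1\) (for \(k=1\) the shift sits in the denominator, but that produces a \(+\) sign in the ratio as well), so your ``minus for \(k=1\)'' parenthetical is off---though your stated conclusion \((-1)^{m_k}\) and the final formulas are correct.
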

\begin{proof}
Since \( -2\mathsf K_+=2\mathsf K_- -1 \), we get from \eqref{Phitheta} that
\[
\Phi(\z_0) = e^{\pi\ic(\mathsf K_+-c_\rho-l_0-m_0\mathsf B)}\frac{\theta(c_\rho+2\mathsf K_-+m_0\mathsf B)}{\theta(c_\rho+m_0\mathsf B)}.
\]
The first relation in \eqref{Phizk} now follows from \eqref{theta-periods}. Similarly, we have that
\[
\Phi(\z_1) = e^{\pi\ic(-\mathsf K_+-c_\rho-l_1-m_1\mathsf B)}\frac{\theta(c_\rho+m_1\mathsf B)}{\theta(c_\rho+2\mathsf K_++m_1\mathsf B)},
\]
which yields the second relation in \eqref{Phizk}, again by \eqref{theta-periods}. To get \eqref{ProdPhizk}, observe that
\[
\theta(c_\rho+2\mathsf K_-) = \theta(c_\rho+2\mathsf K_+-\mathsf B)=-e^{2\pi\ic c_\rho} \theta(c_\rho+2\mathsf K_+)
\]
by \eqref{theta-periods}. Finally, \eqref{Phi01} follows from \eqref{Phitheta} and \eqref{0-integral}.
\end{proof}

\begin{lem}
Let
\begin{equation}
\label{product-Psis}
X_n:=\lim_{z\to\infty}z^{-2}\Psi_n\big(z^{(0)}\big)\Psi_{n-1}\big(z^{(1)}\big).
\end{equation}
When \( |\pi(\z_k)|<\infty \), it holds that
\begin{equation}
\label{product-Psis-V}
X_n = \frac{4}{a^2+b^2}\frac{\theta^2(c_\rho)}{\theta^2(0)} \frac{(-1)^{\imath(n)}}{\Phi^{2\imath(n)}(\z_1)}.
\end{equation}
\end{lem}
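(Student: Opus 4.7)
The key simplification comes from involution symmetry: since \( \Phi(\z)\Phi(\z^*)\equiv 1 \) and \( S_\rho(\z)S_\rho(\z^*)\equiv 1 \), the definition \eqref{Psin} collapses to
\[
\Psi_n\big(z^{(0)}\big)\Psi_{n-1}\big(z^{(1)}\big)=\Phi\big(z^{(0)}\big)\,T_{\imath(n)}\big(z^{(0)}\big)\,T_{\imath(n-1)}\big(z^{(1)}\big).
\]
As \( z\to\infty \), the factor \( \Phi(z^{(0)}) \) grows like \( z \) by \eqref{log-cap}, and \( T_{\imath(n-1)}(z^{(1)}) \) grows like \( z \) because of its simple pole at \( \infty^{(1)} \), while \( T_{\imath(n)}(z^{(0)}) \) is finite there under the assumption \( |\pi(\z_k)|<\infty \). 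Hence \( X_n \) is the product of the leading coefficients of \( (\Phi T_{\imath(n)})(z^{(0)})/z \) and \( T_{\imath(n-1)}(z^{(1)})/z \).

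To extract these coefficients, set \( u_k:=\int_{\boldsymbol a_3}^{z^{(k)}}\Omega \), so that \( u_k\to(-1)^k\mathsf K_+ \) with \( u_k-(-1)^k\mathsf K_+\sim(-1)^{k+1}/(Cz) \), where \( C:=\oint_\ualpha\dd s/w(\s) \). Combining \eqref{Phitheta} and \eqref{T0T1} yields
\[
(\Phi T_k)\big(z^{(0)}\big)=e^{\pi\ic(k-1)u_0}\,\frac{\theta\big(u_0-c_\rho-(-1)^k\mathsf K_+\big)}{\theta(u_0+\mathsf K_+)},
\]
whose denominator has a simple zero at \( u_0=\mathsf K_+ \) with derivative \( \theta'(2\mathsf K_+) \); similarly \( \theta(u_1-\mathsf K_+) \) vanishes at \( u_1=-\mathsf K_+ \) with derivative \( -\theta'(2\mathsf K_+) \) (by oddness of \( \theta' \) at the zero \( 2\mathsf K_+\equiv-2\mathsf K_+\pmod{\Z+\mathsf B\Z} \)). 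Substituting the local coordinate, I obtain both leading coefficients in closed form.

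Assembly now unifies the two parities. For \( \imath(n)=0 \) the theta values collapse immediately to \( \theta^2(c_\rho) \). For \( \imath(n)=1 \) I rewrite the product \( \theta(c_\rho-2\mathsf K_+)\theta(c_\rho+2\mathsf K_+) \) by shifting the first factor by \( 4\mathsf K_+=1+\mathsf B \) in \eqref{theta-periods} (yielding the sign \( -e^{2\pi\ic c_\rho} \)) and then invoke \eqref{Phizk} to produce \( -e^{-2\pi\ic\mathsf K_+}\theta^2(c_\rho)/\Phi^2(\z_1) \). The upshot is
\[
X_n=\frac{C^2 e^{-2\pi\ic\mathsf K_+}}{\big(\theta'(2\mathsf K_+)\big)^2}\,\theta^2(c_\rho)\,\frac{(-1)^{\imath(n)}}{\Phi^{2\imath(n)}(\z_1)}.
\]

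The prefactor is pinned down for free by applying the very same local theta expansion to \( \Phi(z^{(0)}) \) alone via \eqref{Phitheta}: this yields \( \Phi(z^{(0)})\sim-Cz\theta(0)e^{-\pi\ic\mathsf K_+}/\theta'(2\mathsf K_+) \), which, matched with \eqref{log-cap}, gives the identity
\[
\frac{C^2 e^{-2\pi\ic\mathsf K_+}}{\big(\theta'(2\mathsf K_+)\big)^2}=\frac{4}{(a^2+b^2)\theta^2(0)}.
\]
Substitution produces \eqref{product-Psis-V}. The main obstacle is the bookkeeping of quasi-periodicity signs from \eqref{theta-periods} and consistent handling of the local coordinate at the two infinities; conceptually the pleasant point is that no external theta-constant identity needs to be invoked, since \eqref{log-cap} already encodes the required relation between \( C \), \( \theta'(2\mathsf K_+) \), and \( \theta(0) \).
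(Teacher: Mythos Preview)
Your proof is correct and uses the same essential ingredients as the paper---the involution identities \(\Phi(\z)\Phi(\z^*)\equiv S_\rho(\z)S_\rho(\z^*)\equiv1\), the normalization \eqref{log-cap}, the theta representations \eqref{Phitheta} and \eqref{T0T1}, and \eqref{Phizk} for the odd-parity case. The only tactical difference is the grouping: the paper factors \(z^{-2}\Phi^2(z^{(0)})\to 4/(a^2+b^2)\) first, leaving two \emph{finite} evaluations \(T_{\imath(n)}(\infty^{(0)})\) and \((\Phi T_{\imath(n-1)})(\infty^{(1)})\) (the zero of \(\Phi\) at \(\infty^{(1)}\) cancels the pole of \(T_{\imath(n-1)}\)), so \(\theta'(2\mathsf K_+)\) and the period \(C\) never enter the computation; you instead group \((\Phi T_{\imath(n)})(z^{(0)})\) and \(T_{\imath(n-1)}(z^{(1)})\), compute both leading coefficients at their poles, and then eliminate the resulting combination \(C^2 e^{-2\pi\ic\mathsf K_+}/(\theta'(2\mathsf K_+))^2\) by re-expanding \(\Phi(z^{(0)})\) via \eqref{Phitheta} and matching to \eqref{log-cap}. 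Both routes are clean; the paper's avoids the auxiliary derivative altogether, while yours has the pleasant feature of making explicit that no independent theta-constant identity is needed beyond what \eqref{log-cap} already encodes.
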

\begin{proof}
Since \( \Phi(\z)\Phi(\z^*)\equiv1 \) and \( S_\rho(\z)S_\rho(\z^*)\equiv1 \), the desired limit is equal to
\[
\frac{4}{a^2+b^2}T_{\imath(n)}\big(\infty^{(0)}\big)\lim_{z\to\infty}\Phi\big(z^{(1)}\big)T_{\imath(n-1)}\big(z^{(1)}\big),
\]
where we also used \eqref{log-cap}. Since \( -2\mathsf K_+=2\mathsf K_- -1 \), it follows from \eqref{T0T1} and \eqref{infinite-integral} that
\[
T_{\imath(n)}\big(\infty^{(0)}\big) = e^{ \pi \ic \imath(n)\mathsf K_+}\dfrac{\theta \big( c_\rho + 2\imath(n)\mathsf K_- \big)}{\theta(0)}.
\]
We further deduce from \eqref{T0T1} and \eqref{Phitheta} that
\[
\big(\Phi T_{\imath(n-1)}\big)(\z) = \exp \left \{ -\pi \ic \imath(n) \int_{\boldsymbol a_3}^{\z} \Omega \right \}  \dfrac{\theta \big( \int_{\boldsymbol a_3}^{\z} \Omega - c_\rho + (-1)^{\imath(n)}\mathsf K_+ \big)}{\theta \big( \int_{\boldsymbol a_3}^{\z} \Omega + \mathsf K_+ \big)}.
\]
Therefore, it follows from \eqref{infinite-integral} that
\[
\big(\Phi T_{\imath(n-1)}\big)\big(\infty^{(1)}\big) = e^{ \pi \ic \imath(n)\mathsf K_+}\dfrac{\theta \big( c_\rho + 2\imath(n)\mathsf K_+ \big)}{\theta(0)}.
\]
Hence, we get from \eqref{Phizk} that
\[
X_n = \frac{4}{a^2+b^2}\frac{\theta^2(c_\rho)}{\theta^2(0)} \left( (-1)^{l_0-l_1+m_0-m_1} \frac{\Phi(\z_0)}{\Phi(\z_1)}\right)^{\imath(n)}.
\]
The claim of the lemma now follows from \eqref{ProdPhizk}.
\end{proof}

\begin{lem}
It holds that
\begin{equation}
\label{theta-derivative}
\frac{\dd}{\dd\zeta}\left(e^{\pi\ic\zeta}\frac{\theta(\zeta+\mathsf K_+)}{\theta(\zeta-\mathsf K_+)} \right) = \ic\pi\theta^2(0)e^{\pi\ic\zeta}\frac{\theta(\zeta-\mathsf K_-)\theta(\zeta+\mathsf K_-)}{\theta^2(\zeta-\mathsf K_+)}.
\end{equation}
\end{lem}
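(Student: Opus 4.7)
The plan is to clear denominators in \eqref{theta-derivative}: multiplying both sides by $e^{-\pi\ic\zeta}\theta^2(\zeta-\mathsf K_+)$ and executing the derivative on the left via the product rule reduces the claim to the entire identity $N(\zeta)=M(\zeta)$, where
\[
N(\zeta):=\pi\ic\,\theta(\zeta+\mathsf K_+)\theta(\zeta-\mathsf K_+)+\theta'(\zeta+\mathsf K_+)\theta(\zeta-\mathsf K_+)-\theta(\zeta+\mathsf K_+)\theta'(\zeta-\mathsf K_+)
\]
and $M(\zeta):=\pi\ic\,\theta^2(0)\theta(\zeta-\mathsf K_-)\theta(\zeta+\mathsf K_-)$. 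The strategy is a divisor-theoretic comparison: show that $N$ and $M$ are sections of the same degree-$2$ theta line bundle with the same zero set, hence proportional, then fix the proportionality constant with one scalar evaluation.

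For the quasi-periodicity, I would use \eqref{theta-periods} together with its differentiated form $\theta'(\zeta+\mathsf B)=e^{-\pi\ic\mathsf B-2\pi\ic\zeta}[\theta'(\zeta)-2\pi\ic\theta(\zeta)]$. A direct computation shows both $N$ and $M$ are $1$-periodic and both satisfy $F(\zeta+\mathsf B)=e^{-2\pi\ic\mathsf B-4\pi\ic\zeta}F(\zeta)$; the $\pi\ic\,\theta(\zeta+\mathsf K_+)\theta(\zeta-\mathsf K_+)$ term in $N$ is present precisely to absorb the $-2\pi\ic\,\theta\theta$ cross terms produced when the differentiated quasi-period is applied. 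The zeros of $M$ modulo $\Z+\mathsf B\Z$ are manifestly $\zeta\equiv 2\mathsf K_+\pm\mathsf K_-$. To check that $N$ vanishes there as well, I would first establish the two half-period values $\theta'(1/2)/\theta(1/2)=0$ (immediate from the fact that $\theta'/\theta$ is both $1$-periodic and odd, the latter by the evenness of $\theta$) and $\theta'(\mathsf B/2)/\theta(\mathsf B/2)=-\pi\ic$ (obtained by differentiating \eqref{theta-periods}, evaluating at $\zeta=-\mathsf B/2$, and using the oddness of $\theta'$). Substituting these into $N(\zeta)$ at $\zeta\equiv 2\mathsf K_+\pm\mathsf K_-$---after using $1$-periodicity and the quasi-period to normalize the arguments of all four factors---produces an explicit cancellation yielding $N=0$ at both points.

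With $N$ and $M$ in the same two-dimensional space and sharing their full divisor, $N/M$ is a holomorphic function on the torus, hence a constant. I would fix this constant at $\zeta=\mathsf K_+$: the relations $\theta(2\mathsf K_+)=0$ and $\theta'(0)=0$ (the latter from the evenness of $\theta$) collapse the expression to $N(\mathsf K_+)=\theta'(2\mathsf K_+)\theta(0)$, while $M(\mathsf K_+)=\pi\ic\,\theta^2(0)\theta(1/2)\theta(\mathsf B/2)$. Matching these is exactly Jacobi's classical derivative identity $\theta'\!\bigl((1+\mathsf B)/2\bigr)=\pi\ic\,\theta(0)\theta(1/2)\theta(\mathsf B/2)$---the genus-one content of the familiar formula $\theta_1'(0)=\pi\theta_2(0)\theta_3(0)\theta_4(0)$---which is the only nontrivial external input to the argument. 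The main hurdle is thus locating and invoking this classical identity; everything else is routine bookkeeping with the transformation properties of $\theta$ under the lattice.
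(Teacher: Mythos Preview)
Your proof is correct. The paper, however, does not prove this lemma at all: it simply cites \cite[Eq.~(20.7.25)]{DLMF} and notes the dictionary \(\theta(\zeta)=\theta_3(\pi\zeta\mid\mathsf B)\). What you have written is therefore a genuinely different, self-contained argument rather than an appeal to a handbook. Your divisor-theoretic comparison (same quasi-periodicity, same zero set, hence proportional; fix the constant via Jacobi's derivative formula) is precisely the classical route by which identities of this type are established, and it is the natural proof one would supply if the DLMF reference were unavailable. The only thing to be careful about is the normalization: the form \(\theta_1'(0)=\pi\theta_2(0)\theta_3(0)\theta_4(0)\) you quote carries a \(\pi\) that depends on whether the Jacobi functions have argument \(z\) or \(\pi z\); in the paper's convention the needed scalar statement is exactly \(\theta'\bigl((1+\mathsf B)/2\bigr)=\pi\ic\,\theta(0)\theta(1/2)\theta(\mathsf B/2)\), which is what your evaluation at \(\zeta=\mathsf K_+\) requires, so the constant matches. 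What your approach buys is transparency about \emph{why} the identity holds; what the paper's citation buys is brevity.
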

\begin{proof}
See \cite[Eq. (20.7.25)]{DLMF} (observe that \( \theta(\zeta)=\theta_3(\pi\zeta|\mathsf B) \) in the notation of \cite[Chapter~20]{DLMF}).
\end{proof}

\begin{lem}
It holds that
\begin{equation}
\label{z-theta}
z= -\frac{\sqrt{a^2+b^2}}2\frac{e^{-\pi\ic\mathsf K_+}\theta^2(0)}{\theta(1/2)\theta(\mathsf B/2)} \frac{\theta\big(\int_{\boldsymbol a_3}^\z\Omega-\mathsf K_-\big)\theta\big(\int_{\boldsymbol a_3}^\z\Omega+\mathsf K_-\big)}{\theta\big(\int_{\boldsymbol a_3}^\z\Omega-\mathsf K_+\big)\theta\big(\int_{\boldsymbol a_3}^\z\Omega+\mathsf K_+\big)}.
\end{equation}
\end{lem}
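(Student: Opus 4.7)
The plan is to recognize both sides of \eqref{z-theta} as meromorphic functions on $\RS$ with identical divisors, hence proportional, and then pin down the proportionality constant $C$ by comparing their leading behaviors at $\infty^{(0)}$. Denote by $F(\z)$ the theta-function ratio on the right-hand side of \eqref{z-theta}, stripped of the constant prefactor, and write $u := \int_{\boldsymbol a_3}^{\z}\Omega$.

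First I would verify that $F(\z)$ descends to a well-defined meromorphic function on $\RS$. Monodromy around $\ualpha$ sends $u \mapsto u+1$, under which every $\theta$-factor is invariant; monodromy around $\ubeta$ sends $u \mapsto u+\mathsf B$, and by \eqref{theta-periods} each $\theta$-factor gains a unimodular exponential. Adding the exponents of the two numerator factors produces $-2\pi\ic\mathsf B - 4\pi\ic u$ (the $\pm\mathsf K_-$ shifts cancel), and the two denominator factors contribute exactly the same total with $\mathsf K_-$ replaced by $\mathsf K_+$; these cancel, so $F(\z)$ is single-valued. Next I would match the divisor. Recall $\theta(\zeta)=0$ iff $[\zeta]=[2\mathsf K_+]=[2\mathsf K_-]$. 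Using \eqref{infinite-integral} and \eqref{0-integral}, one checks that $\theta(u+\mathsf K_+)$ vanishes precisely at $\z=\infty^{(0)}$ and $\theta(u-\mathsf K_+)$ at $\z=\infty^{(1)}$, while $\theta(u-\mathsf K_-)$ vanishes at $\boldsymbol 0$ and $\theta(u+\mathsf K_-)$ at $\boldsymbol 0^*$, all simply (since $\theta$ has only simple zeros). This matches the divisor of $\pi(\z)=z$, so the ratio $z/F(\z)$ is a holomorphic non-vanishing function on the compact surface $\RS$, therefore a constant $C$.

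To evaluate $C$, I would pass to the local coordinate $\xi=1/z$ at $\infty^{(0)}$. Since $w(\z) = z^2 + \mathcal O(1)$ there, the normalization of $\Omega$ gives
\[
u = \mathsf K_+ - c\,\xi + \mathcal O(\xi^3), \qquad c^{-1} := \oint_{\ualpha}\frac{\dd z}{w(\s)}.
\]
The only vanishing theta factor in $F$ is $\theta(u+\mathsf K_+) \sim -c\,\xi\,\theta'(2\mathsf K_+)$; the other three factors approach $\theta(0)$, $\theta(\mathsf K_+-\mathsf K_-)=\theta(\mathsf B/2)$, and $\theta(\mathsf K_++\mathsf K_-)=\theta(1/2)$. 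Matching $z=1/\xi = C\,F(\z)$ yields
\[
C = -\,\frac{c\,\theta(0)\,\theta'(2\mathsf K_+)}{\theta(1/2)\,\theta(\mathsf B/2)}.
\]

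Finally, to express $c\,\theta'(2\mathsf K_+)$ in closed form, I would apply the same local expansion to \eqref{Phitheta}, which yields $\Phi(z^{(0)}) \sim e^{-\pi\ic\mathsf K_+}\theta(0)/[-c\,\xi\,\theta'(2\mathsf K_+)]$ as $\xi\to 0$. Comparing with the normalization \eqref{log-cap}, $\Phi(z^{(0)}) \sim -2/(\sqrt{a^2+b^2}\,\xi)$, one reads off
\[
c\,\theta'(2\mathsf K_+) = \tfrac{1}{2}\sqrt{a^2+b^2}\,e^{-\pi\ic\mathsf K_+}\,\theta(0),
\]
and substitution produces exactly the prefactor in \eqref{z-theta}. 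The main (only) obstacle is careful bookkeeping of the $\theta$-arguments and the branch-choice conventions governing $\mathsf K_\pm$, the sign in \eqref{log-cap}, and the orientation of $\ualpha$; notably, the derivative identity \eqref{theta-derivative} is not required here because $\theta'(2\mathsf K_+)$ cancels between the two local expansions at $\infty^{(0)}$.
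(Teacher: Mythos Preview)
Your proof is correct and follows essentially the same approach as the paper: both establish $z=C\,F(\z)$ by matching divisors (invoking \eqref{theta-periods}, \eqref{infinite-integral}, \eqref{0-integral}) and then fix $C$ by combining the theta-representation \eqref{Phitheta} of $\Phi$ with its known leading behavior \eqref{log-cap} at $\infty^{(0)}$. The only cosmetic difference is that the paper evaluates the finite limit $\lim_{z\to\infty} z\Phi^{-1}(z^{(0)}) = C\,e^{\pi\ic\mathsf K_+}\theta(1/2)\theta(\mathsf B/2)/\theta^2(0)$ directly, thereby bypassing the intermediate appearance of $\theta'(2\mathsf K_+)$ that you introduce and then cancel.
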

\begin{proof}
It follows from \eqref{theta-periods}, \eqref{infinite-integral}, and \eqref{0-integral} that
\[
z = C \frac{\theta\big(\int_{\boldsymbol a_3}^\z\Omega-\mathsf K_-\big)\theta\big(\int_{\boldsymbol a_3}^\z\Omega+\mathsf K_-\big)}{\theta\big(\int_{\boldsymbol a_3}^\z\Omega-\mathsf K_+\big)\theta\big(\int_{\boldsymbol a_3}^\z\Omega+\mathsf K_+\big)}
\]
for some normalizing constant \( C \). It further follows from \eqref{log-cap}, \eqref{Phitheta}, and \eqref{infinite-integral} that
\[
-\frac{\sqrt{a^2+b^2}}2 = \lim_{z\to\infty} z\Phi^{-1}\big(z^{(0)}\big) = Ce^{\pi\ic \mathsf K_+}\frac{\theta(1/2)\theta(\mathsf B/2)}{\theta^2(0)},
\]
which yields the desired result.
\end{proof}

\begin{lem}
It holds that
\begin{equation}
\label{moduli}
e^{\pi\ic\mathsf B/2}\frac{\theta^2(1/2)\theta^2(\mathsf B/2)}{\theta^4(0)} = \frac{a^2+b^2}{4ab}.
\end{equation}
\end{lem}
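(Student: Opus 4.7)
The plan is to pin down the constant $C := \bigl(\oint_\ualpha \dd s/w(\s)\bigr)^{-1}$ appearing in $\Omega = C\,\dd z/w(\z)$ in two independent ways by specializing \eqref{z-theta} at the two points of $\RS$ where its right-hand side takes an indeterminate form, namely $\boldsymbol 0$ and $\infty^{(0)}$. Multiplying the resulting relations will eliminate $C$ and yield \eqref{moduli}.

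As a preliminary I would compute $w(\boldsymbol 0)$ explicitly. Evaluating \eqref{PhiSquared} at $z=0$ on $\RS^{(0)}$ and invoking \eqref{K0-formula} gives
\[
\frac{(b^2-a^2) + 2w(\boldsymbol 0)}{a^2+b^2} = \Phi^2(\boldsymbol 0) = e^{2\ic\arctan(a/b)} = \frac{b^2-a^2+2\ic ab}{a^2+b^2},
\]
so $w(\boldsymbol 0)=\ic ab$. Setting $\zeta := \int_{\boldsymbol a_3}^\z \Omega$, one has $\dd z/\dd\zeta = w(\z)/C$.

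Next I would carry out the two local expansions of \eqref{z-theta}. Near $\boldsymbol 0$, by \eqref{0-integral}, $\zeta \to -\mathsf K_-$; of the four theta factors on the right-hand side of \eqref{z-theta}, only $\theta(\zeta-\mathsf K_-)$ vanishes, since $-2\mathsf K_- = 2\mathsf K_+ - 1$ lies in the zero class of $\theta$ by \eqref{theta-periods}. Expanding to first order in $v := \zeta + \mathsf K_-$ and using evenness of $\theta$ produces
\[
\frac{\ic ab}{C} = -\frac{\sqrt{a^2+b^2}}{2}\frac{e^{-\pi\ic\mathsf K_+}\theta'(-2\mathsf K_-)\theta^3(0)}{\theta^2(1/2)\theta^2(\mathsf B/2)}.
\]
Near $\infty^{(0)}$, by \eqref{infinite-integral}, $\zeta \to \mathsf K_+$, and $\Omega \sim -C\,\dd(1/z)$ gives $z\sim -C/v$ with $v := \zeta - \mathsf K_+$. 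Now only $\theta(\zeta + \mathsf K_+)$ vanishes, and matching leading singularities yields
\[
-C = -\frac{\sqrt{a^2+b^2}}{2}\frac{e^{-\pi\ic\mathsf K_+}\theta(0)}{\theta'(2\mathsf K_+)}.
\]

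Multiplying the two displayed relations and using $\theta'(-2\mathsf K_-) = \theta'(2\mathsf K_+)$ (from $\theta(\zeta+1)=\theta(\zeta)$, since $-2\mathsf K_- = 2\mathsf K_+ -1$) eliminates $C$ and gives
\[
-\ic ab = \frac{a^2+b^2}{4}\frac{e^{-2\pi\ic\mathsf K_+}\theta^4(0)}{\theta^2(1/2)\theta^2(\mathsf B/2)}.
\]
Finally, $e^{-2\pi\ic\mathsf K_+} = -\ic\,e^{-\pi\ic\mathsf B/2}$, so the factors of $-\ic$ on both sides cancel and rearrangement produces \eqref{moduli}. The only delicate point in the argument is the determination of $w(\boldsymbol 0) = \ic ab$ with the correct sign; without this input, the residue computations at $\boldsymbol 0$ and $\infty^{(0)}$ are symmetric enough that the factor of $ab$ on the right-hand side of \eqref{moduli} would be lost.
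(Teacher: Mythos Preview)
Your proof is correct and follows a route that is structurally parallel to, but distinct from, the paper's. Both arguments specialize the identity \eqref{z-theta} at two points of $\RS$ and multiply the results so that the auxiliary theta factors cancel. The paper chooses the \emph{regular} points $\boldsymbol a_3$ (where $\int_{\boldsymbol a_3}^\z\Omega=0$) and $\boldsymbol a_2$ (where it first shows $\int_{\boldsymbol a_3}^\z\Omega=1/2$ via a homology argument), obtaining two direct evaluations that involve $\theta^2(\mathsf K_-)/\theta^2(\mathsf K_+)$ and its reciprocal, which cancel upon multiplication. You instead choose the \emph{singular} points $\boldsymbol 0$ and $\infty^{(0)}$, where the right-hand side of \eqref{z-theta} has a simple zero and a simple pole; matching leading-order behavior produces two relations involving $\theta'(2\mathsf K_+)$ and the period constant $C$, both of which then cancel. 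The paper's route is slightly more economical---no derivatives, no need for $w(\boldsymbol 0)$, and no appeal to \eqref{K0-formula}---but your approach has the pleasant by-product of determining $C$ along the way, which is essentially the content of the next lemma, \eqref{alpha-period}.
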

\begin{proof}
To prove \eqref{moduli}, evaluate \eqref{z-theta} at \( \boldsymbol a_3 \) to get
\[
\frac{\theta(1/2)\theta(\mathsf B/2)}{\theta^2(0)} = \frac{\sqrt{a^2+b^2}}{2a}e^{-\pi\ic\mathsf K_+}\frac{\theta^2(\mathsf K_-)}{\theta^2(\mathsf K_+)}.
\]
Since \( \boldsymbol\Delta_3 - \boldsymbol\Delta _1 \) is homologous to \( \ualpha-\ubeta \), one can easily deduce from Figure~\ref{f:cross} that it also holds that
\[
\int_{\boldsymbol a_3}^{\boldsymbol a_2}\Omega = \left(\int_{\boldsymbol a_3}^{{\boldsymbol 0^*}} + \int_{{\boldsymbol 0^*}}^{\boldsymbol a_1}+\int_{\boldsymbol a_1}^{\boldsymbol a_2}\right)\Omega = \frac12\int_{\boldsymbol \Delta_3-\boldsymbol \Delta_1+\boldsymbol\beta}\Omega = \frac12,
\]
where the initial path of integration (except for \( \boldsymbol a_2 \)) belongs to \( \RS_{\ualpha,\ubeta} \). Thus, evaluating \eqref{z-theta} at \( \boldsymbol a_2 \) gives us
\[
\frac{\theta(1/2)\theta(\mathsf B/2)}{\theta^2(0)} = -\frac{\sqrt{a^2+b^2}}{2\ic b}e^{-\pi\ic\mathsf K_+}\frac{\theta^2(\mathsf K_+)}{\theta^2(\mathsf K_-)},
\]
where we used \eqref{theta-periods}. Multiplying two expressions for \( \theta(1/2)\theta(\mathsf B/2)/\theta^2(0) \) yields the desired result.
\end{proof}

\begin{lem}
It holds that
\begin{equation}
\label{alpha-period}
\oint_{\boldsymbol\alpha}\frac{\dd s}{w(\s)}  = \frac{2\pi\ic}{\sqrt{a^2+b^2}}e^{\pi\ic\mathsf K_+}\theta(1/2)\theta(\mathsf B/2).
\end{equation}
\end{lem}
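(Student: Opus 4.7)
The plan is to extract $C := \oint_{\boldsymbol\alpha}\dd s/w(\s)$ from the behavior of $\Phi(\z)$ near the ramification point $\boldsymbol a_3$. By the normalization in \eqref{Omega} one has $\Omega(\z) = C^{-1}\dd z/w(\z)$, so the differential $G(\z) = z\,\dd z/w(\z)$ appearing in the exponent of \eqref{Phi} satisfies $G(\z) = C z\,\Omega(\z)$. Since $\dd z/w(\z)$ is holomorphic and nonzero at every ramification point (in the local parameter $t = \sqrt{z+a}$ at $\boldsymbol a_3$, both $\dd z$ and $w$ have a simple zero in $t$), the function $\zeta := \int_{\boldsymbol a_3}^\z\Omega$ is a holomorphic uniformizing coordinate near $\boldsymbol a_3$. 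In this coordinate, $\dd\log\Phi = Cz\,\dd\zeta$. Because $\Phi(\boldsymbol a_3) = 1$ (by the choice of base point in \eqref{Phi}) and $a_3 = -a$, evaluating at $\zeta = 0$ gives
\[
\Phi'(0) = -Ca, \qquad ' := \dd/\dd\zeta.
\]

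The second step is to read off $\Phi'(0)$ from the theta representation \eqref{Phitheta}. Inverting that formula yields $1/\Phi(\z) = e^{\pi\ic\zeta}\theta(\zeta+\mathsf K_+)/\theta(\zeta-\mathsf K_+)$, which is precisely the function differentiated on the left-hand side of \eqref{theta-derivative}. Evaluating the right-hand side of \eqref{theta-derivative} at $\zeta = 0$ and using the evenness of $\theta$ yields
\[
(1/\Phi)'(0) = \ic\pi\theta^2(0)\frac{\theta^2(\mathsf K_-)}{\theta^2(\mathsf K_+)}.
\]
Since $\Phi(0) = 1$, this equals $-\Phi'(0) = Ca$, so
\[
C = \frac{\ic\pi\theta^2(0)}{a}\frac{\theta^2(\mathsf K_-)}{\theta^2(\mathsf K_+)}.
\]

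To finish, I would invoke the identity that appears already in the proof of \eqref{moduli}: specializing \eqref{z-theta} at $\z = \boldsymbol a_3$ and rearranging gives
\[
\frac{\theta^2(\mathsf K_-)}{\theta^2(\mathsf K_+)} = \frac{2a}{\sqrt{a^2+b^2}}\,e^{\pi\ic\mathsf K_+}\frac{\theta(1/2)\theta(\mathsf B/2)}{\theta^2(0)}.
\]
Substituting this into the formula for $C$, the factors of $a$ and $\theta^2(0)$ cancel and one arrives at \eqref{alpha-period} on the nose. The step I expect to be the most delicate is the first: one must justify that $\zeta$ is a genuine local coordinate at the ramification point and track the sign coming from $a_3 = -a$ to land on $\Phi'(0) = -Ca$ with the correct sign; once that is secured, the remainder is a direct bookkeeping exercise combining \eqref{theta-derivative} with the already-established theta identity from the proof of \eqref{moduli}.
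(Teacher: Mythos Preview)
Your argument is correct and is essentially the paper's own proof: both differentiate the theta representation \eqref{Phitheta} via \eqref{theta-derivative}, compare with $\dd\log\Phi = Cz\,\Omega$ coming from \eqref{Phi}, and then use \eqref{z-theta} to extract~$C$. The only cosmetic difference is that the paper carries out the comparison globally (matching the two expressions for $\Phi'(\z)$ as functions and reading off the constant directly from \eqref{z-theta}), whereas you evaluate at the single point $\zeta=0$ and quote the specialization of \eqref{z-theta} at $\boldsymbol a_3$ that already appears in the proof of \eqref{moduli}; the content is identical.
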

\begin{proof}
We can deduce from \eqref{Phitheta}, \eqref{theta-derivative}, and the evenness of the theta function that
\[
\Phi^\prime(\z) = -\ic\pi\theta^2(0)\left(\oint_{\boldsymbol\alpha}\frac{\dd s}{w(\s)}\right)^{-1}\frac{\Phi(\z)}{w(\z)}\frac{\theta\big(\int_{\boldsymbol a_3}^\z\Omega+\mathsf K_-\big)\theta\big(\int_{\boldsymbol a_3}^\z\Omega-\mathsf K_-\big)}{\theta\big(\int_{\boldsymbol a_3}^\z\Omega+\mathsf K_+\big)\theta\big(\int_{\boldsymbol a_3}^\z\Omega-\mathsf K_+\big)}.
\]
Since \( \Phi^\prime(\z) = z\Phi(\z)/w(\z) \) by \eqref{Phi}, \eqref{alpha-period} follows from \eqref{z-theta}.
\end{proof}

\begin{lem}
\label{lem:4.9}
Let
\begin{equation}
\label{T-ratio}
Y_n := \big(T_{\imath(n)}^\prime T_{\imath(n-1)}/\Phi-T_{\imath(n)}(T_{\imath(n-1)}/\Phi)^\prime\big)\big({\boldsymbol 0}\big).
\end{equation}
When \( |\pi(\z_k)|=\infty \), it holds that \( Y_n = 0 \), otherwise, we have that
\begin{equation}
\label{T-ratio-V}
Y_n = (-1)^{l_0+m_0+\imath(n)}\frac{2e^{\pi\ic c_\rho}}{\sqrt{a^2+b^2}}\frac{\Phi(\z_0)}{\Phi^2\big({\boldsymbol 0}\big)}\frac{\theta^2(c_\rho)}{\theta^2(0)},
\end{equation}
where the integers \( l_0,m_0 \) were defined in \eqref{lms}.
\end{lem}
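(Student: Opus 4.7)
The plan is to recognize $Y_n$ as a Wronskian and then evaluate it using the theta-function identity \eqref{theta-derivative} together with the auxiliary identities already proved in this section.

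My first step is to differentiate \eqref{Phi} to get $\Phi'(\z)=z\Phi(\z)/w(\z)$, which vanishes at $\boldsymbol 0$ since $\pi(\boldsymbol 0)=0$ and $\boldsymbol 0$ is not a branch point of $w$. Expanding $(T_{\imath(n-1)}/\Phi)'$ by the quotient rule and using $\Phi'(\boldsymbol 0)=0$, the definition \eqref{T-ratio} simplifies to
\[
Y_n \;=\; \frac{W(T_{\imath(n)},T_{\imath(n-1)})(\boldsymbol 0)}{\Phi(\boldsymbol 0)}, \qquad W(F,G):=F'G-FG'.
\]
Antisymmetry of $W$ gives $W(T_{\imath(n)},T_{\imath(n-1)})=(-1)^{\imath(n)}W(T_0,T_1)$, so the task reduces to evaluating $W(T_0,T_1)(\boldsymbol 0)$. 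Setting $u(\z):=\int_{\boldsymbol a_3}^\z\Omega$, so that $\dd u/\dd z=1/(w(\z)C)$ with $C:=\oint_{\ualpha}\dd s/w(\s)$, I would rewrite \eqref{T0T1} as $T_1/T_0=e^{\pi\ic c_\rho}\cdot e^{\pi\ic\zeta}\theta(\zeta+\mathsf K_+)/\theta(\zeta-\mathsf K_+)$ with $\zeta=u-c_\rho$. Combining $W(T_0,T_1)=-T_0^2(T_1/T_0)'$, the chain rule, and \eqref{theta-derivative} then produces a closed form for $W(T_0,T_1)$ as a quotient of theta values.

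To evaluate at $\boldsymbol 0$, I invoke \eqref{0-integral} to get $u(\boldsymbol 0)=-\mathsf K_-$. Using $\mathsf K_++\mathsf K_-=1/2$, the evenness of $\theta$, and the identity $\theta(c_\rho+2\mathsf K_-)=-e^{2\pi\ic c_\rho}\theta(c_\rho+2\mathsf K_+)$ (already established in the derivation of \eqref{ProdPhizk}), the numerator of $(T_1/T_0)'(\boldsymbol 0)$ collapses to a scalar multiple of $\theta(c_\rho)\theta(c_\rho+2\mathsf K_+)$. Since the zero set of $\theta$ is exactly the class $[(1+\mathsf B)/2]=[2\mathsf K_+]$, this factor vanishes iff $[c_\rho]\in\{[0],[2\mathsf K_+]\}$, which by the proof of Proposition~\ref{prop:N} is exactly the condition $|\pi(\z_k)|=\infty$, yielding the first assertion $Y_n=0$ of the lemma.

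In the non-degenerate case, I use \eqref{Phizk} to replace $\theta(c_\rho+2\mathsf K_+)$ by $\Phi(\z_0)\theta(c_\rho)$ up to explicit exponentials and signs, producing the factor $\theta^2(c_\rho)/\theta^2(0)$ appearing in \eqref{T-ratio-V}. The remaining constants $C$, $\theta(1/2)$, and $\theta(\mathsf B/2)$ are eliminated in turn by \eqref{alpha-period}, \eqref{Phi01}, and the modular identity \eqref{moduli}. The final ingredient is the value $w(\boldsymbol 0)=\ic ab$, which is read off by solving \eqref{PhiSquared} at $z=0$ (with $k=0$, since $\boldsymbol 0\in\RS^{(0)}$ as indicated in Figure~\ref{f:cross}) using \eqref{K0-formula}. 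The main obstacle I anticipate is not a single conceptual step but rather the careful bookkeeping of the many exponential prefactors and the half-period arithmetic (juggling $\mathsf K_\pm$, $2\mathsf K_\pm$, and $1/2$) arising from the quasi-periodicity of $\theta$.
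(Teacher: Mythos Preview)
Your proposal is correct and follows essentially the same route as the paper: both reduce $Y_n$ via $\Phi'(\boldsymbol 0)=0$ to $(T_{\imath(n-1)}^2/\Phi)(\boldsymbol 0)\cdot (T_{\imath(n)}/T_{\imath(n-1)})'(\boldsymbol 0)$ (your Wronskian formulation is algebraically the same thing), then apply \eqref{theta-derivative} together with \eqref{0-integral}, \eqref{Phi01}, \eqref{alpha-period}, \eqref{moduli}, and \eqref{Phizk}. Your treatment of the degenerate case is actually a bit slicker than the paper's: you observe directly that the intermediate expression carries the factor $\theta(c_\rho)\theta(c_\rho+2\mathsf K_-)$, which vanishes exactly when $[c_\rho]\in\{[0],[2\mathsf K_+]\}$, whereas the paper splits into the two subcases $\z_0=\infty^{(1)}$ and $\z_1=\infty^{(1)}$ and argues each separately (the second by showing $T_1/T_0$ is a constant multiple of $\Phi$).
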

\begin{proof}
Since \( \Phi^\prime(\z) = z\Phi(\z)/w(\z) \) by \eqref{Phi}, \( \Phi^\prime\big({\boldsymbol 0}\big)=0 \). Therefore,
\[
Y_n = \big(T_{\imath(n-1)}^2 /\Phi \big)\big({\boldsymbol 0}\big) \big( T_{\imath(n)}/T_{\imath(n-1)}\big)^\prime\big({\boldsymbol 0}\big) .
\]
Assume that \( |\pi(\z_k)|<\infty \). Then it follows from \eqref{T0T1}, \eqref{theta-derivative}, and \eqref{alpha-period} that
\begin{multline*}
\left( \frac{T_{\imath(n)}}{T_{\imath(n-1)}}\right)^\prime(\z) = -(-1)^{\imath(n)}\frac{\sqrt{a^2+b^2}}{2w(\z)} \frac{e^{-\pi\ic\mathsf K_+}\theta^2(0)}{\theta(1/2)\theta(\mathsf B/2)} \left(\frac{T_{\imath(n)}}{T_{\imath(n-1)}}\right)(\z) \times \\ \times \frac{\theta\big(\int_{\boldsymbol a_3}^{\z}\Omega-c_\rho+\mathsf K_-\big)\theta\big(\int_{\boldsymbol a_3}^{\z}\Omega-c_\rho-\mathsf K_-\big)}{\theta\big(\int_{\boldsymbol a_3}^{\z}\Omega-c_\rho+\mathsf K_+\big)\theta\big(\int_{\boldsymbol a_3}^{\z}\Omega-c_\rho-\mathsf K_+\big)}.
\end{multline*}
We further deduce from \eqref{T0T1}, \eqref{0-integral}, and \eqref{Phi01} that
\[
(T_{\imath(n-1)}T_{\imath(n)})\big({\boldsymbol 0}\big) = \frac1{\Phi\big({\boldsymbol 0}\big)}\frac{\theta(c_\rho-\mathsf B/2)\theta(c_\rho+1/2)}{\theta(1/2)\theta(\mathsf B/2)}.
\]
Since \( w\big({\boldsymbol 0}\big) = \ic ab \), we therefore get from \eqref{0-integral} that
\[
Y_n = \frac{\sqrt{a^2+b^2}}{2ab} \frac{\ic(-1)^{\imath(n)}}{\Phi^2\big({\boldsymbol 0}\big)}\frac{e^{-\pi\ic\mathsf K_+}\theta^4(0)}{\theta^2(1/2)\theta^2(\mathsf B/2)}\frac{\theta(c_\rho)\theta(c_\rho+2\mathsf K_-)}{\theta^2(0)}.
\]
\eqref{T-ratio-V} now follows from \eqref{moduli} and the first formula in \eqref{Phizk}. 

Let now \( \z_0=\infty^{(1)} \), in which case \( [c_\rho]=[0] \). Since \( \Phi\big(\infty^{(1)}\big)=0 \), we get that \( Y_n=0 \). Finally, let  \( \z_1=\infty^{(1)} \). Then we have that \( -c_\rho=-(-1)^k2\mathsf K_+ + l_k + m_k\mathsf B\) and therefore
\begin{eqnarray*}
\frac{T_1(\z)}{T_0(\z)} &=& \exp\left\{\pi\ic\int_{\boldsymbol a_3}^{\z}\Omega\right\}\frac{\theta\big(\int_{\boldsymbol a_3}^{\z}\Omega+m_1\mathsf B+3\mathsf K_+\big)}{\theta\big(\int_{\boldsymbol a_3}^{\z}\Omega+(m_1+1)\mathsf B-3\mathsf K_+\big)} \\
& = & \exp\left\{\pi\ic\int_{\boldsymbol a_3}^{\z}\Omega\right\}\frac{\theta\big(\int_{\boldsymbol a_3}^{\z}\Omega+(m_1+1)\mathsf B-\mathsf K_+\big)}{\theta\big(\int_{\boldsymbol a_3}^{\z}\Omega+m_1\mathsf B+\mathsf K_+\big)} \\
& = & e^{2\pi\ic(2m_1+1)\mathsf K_-}\Phi(\z)
\end{eqnarray*}
by \eqref{theta-periods} and \eqref{Phitheta}. As \( \Phi^\prime\big({\boldsymbol 0}\big)=0 \), it also holds that \( Y_n =0 \).
\end{proof}

\begin{lem}
\label{lem:4.10}
Let
\begin{equation}
\label{T-ratio1}
Z_n := \big(T_{\imath(n)}^\prime T_{\imath(n-1)}/\Phi-T_{\imath(n)}(T_{\imath(n-1)}/\Phi)^\prime\big)\big({\boldsymbol 0^*}\big).
\end{equation}
When \( |\pi(\z_k)|=\infty \), it holds that \( Z_n = 0 \), otherwise, we have that
\begin{equation}
\label{T-ratio1-V}
Z_n = (-1)^{l_0+m_0+\imath(n)}\frac{2e^{-\pi\ic c_\rho}}{\sqrt{a^2+b^2}}\frac{\Phi(\z_0)}{\Phi^2\big({\boldsymbol 0^*}\big)}\frac{\theta^2(c_\rho)}{\theta^2(0)}.
\end{equation}
\end{lem}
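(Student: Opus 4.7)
The strategy is to mirror the proof of Lemma \ref{lem:4.9} almost verbatim, with the natural substitutions dictated by the involution $\z\mapsto\z^*$: the use of the \emph{second} identity in \eqref{0-integral}, namely $\int_{\boldsymbol a_3}^{{\boldsymbol 0^*}}\Omega=\mathsf K_-$; the sign flip $w({\boldsymbol 0^*})=-\ic ab$; and the identity $\Phi({\boldsymbol 0^*})=1/\Phi({\boldsymbol 0})$ implied by $\Phi(\z)\Phi(\z^*)\equiv1$ together with \eqref{Phi01}. First, since \eqref{Phi} gives $\Phi^\prime=z\Phi/w$ and $z({\boldsymbol 0^*})=0$ while $w({\boldsymbol 0^*})\neq0$, we have $\Phi^\prime({\boldsymbol 0^*})=0$, so exactly as in Lemma \ref{lem:4.9} the definition \eqref{T-ratio1} collapses to
\[
Z_n = (T_0T_1/\Phi)\big({\boldsymbol 0^*}\big)\cdot\mathcal R\big({\boldsymbol 0^*}\big),
\]
where $\mathcal R(\z)$ denotes the four-theta quotient appearing in the explicit formula for $(T_{\imath(n)}/T_{\imath(n-1)})^\prime(\z)$ that is derived via \eqref{T0T1}, \eqref{theta-derivative}, and \eqref{alpha-period}.

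Assume next that $|\pi(\z_k)|<\infty$. I would evaluate both factors at ${\boldsymbol 0^*}$ using $\int_{\boldsymbol a_3}^{{\boldsymbol 0^*}}\Omega=\mathsf K_-$ together with the evenness and period-$1$ property of $\theta$. This gives
\[
\mathcal R\big({\boldsymbol 0^*}\big) = \frac{\theta(c_\rho+2\mathsf K_+)\theta(c_\rho)}{\theta(c_\rho+1/2)\theta(c_\rho+\mathsf B/2)} \qandq (T_0T_1)\big({\boldsymbol 0^*}\big) = \frac{e^{\pi\ic\mathsf K_-}\theta(c_\rho+\mathsf B/2)\theta(c_\rho+1/2)}{\theta^2(\mathsf B/2)},
\]
so in the product the factors $\theta(c_\rho+1/2)$ and $\theta(c_\rho+\mathsf B/2)$ conveniently cancel. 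Combining with the remaining prefactor $-(-1)^{\imath(n)}\sqrt{a^2+b^2}\,e^{-\pi\ic\mathsf K_+}\theta^2(0)/[2w({\boldsymbol 0^*})\theta(1/2)\theta(\mathsf B/2)]$ from the derivative formula, converting $\theta(c_\rho+2\mathsf K_+)=-e^{-2\pi\ic c_\rho}\theta(c_\rho+2\mathsf K_-)$ via \eqref{theta-periods}, and expressing the latter through $\Phi(\z_0)\theta(c_\rho)$ by \eqref{Phizk}, I obtain
\[
Z_n = (-1)^{l_0+m_0+\imath(n)}\frac{\ic\sqrt{a^2+b^2}}{2ab}\frac{e^{-\pi\ic\mathsf B}\theta^2(0)}{\theta^4(\mathsf B/2)}\,e^{-\pi\ic c_\rho}\,\Phi(\z_0)\theta^2(c_\rho).
\]
Applying \eqref{moduli} to clear the ratio $\theta^2(0)/\theta^4(\mathsf B/2)$ and using $\Phi^2({\boldsymbol 0^*})=e^{-2\pi\ic\mathsf K_-}\theta^2(\mathsf B/2)/\theta^2(1/2)$ from \eqref{Phi01} collapses the $a,b,\mathsf B,\mathsf K_\pm$ dependence into the factor $\Phi(\z_0)/\Phi^2({\boldsymbol 0^*})\cdot2/\sqrt{a^2+b^2}$, yielding \eqref{T-ratio1-V}.

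For the case $|\pi(\z_k)|=\infty$, the arguments from Lemma \ref{lem:4.9} transfer verbatim because they only exploit the vanishing of $\Phi^\prime$ at the evaluation point, which holds equally well at ${\boldsymbol 0^*}$. Explicitly, when $[c_\rho]=[0]$ we have $T_0\equiv1$ and $T_1\equiv1/\Phi$ by \eqref{Phitheta}, while when $[c_\rho]=[(1+\mathsf B)/2]$ the identity $T_1/T_0=e^{2\pi\ic(2m_1+1)\mathsf K_-}\Phi$ established at the end of Lemma \ref{lem:4.9} is a global relation on $\RS_{\ualpha,\ubeta}$; in both instances $Z_n$ reduces to a multiple of $\Phi^\prime({\boldsymbol 0^*})=0$.

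The only genuinely delicate step is the conversion $\theta(c_\rho+2\mathsf K_+)\to\theta(c_\rho+2\mathsf K_-)$ via quasi-periodicity, which introduces the crucial factor $e^{-2\pi\ic c_\rho}$; after combining with the other exponentials in $\mathsf K_\pm$ and $\mathsf B$ this leaves precisely $e^{-\pi\ic c_\rho}$ in the final formula, the opposite sign of the $e^{+\pi\ic c_\rho}$ appearing in \eqref{T-ratio-V}. This sign flip, together with the relation $w({\boldsymbol 0^*})=-w({\boldsymbol 0})$, is exactly what distinguishes Lemma \ref{lem:4.10} from Lemma \ref{lem:4.9}; the remainder of the calculation is a direct bookkeeping exercise.
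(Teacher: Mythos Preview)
Your proposal is correct and follows exactly the approach of the paper, which simply states that the proof is the same as that of Lemma~\ref{lem:4.9}; you have merely spelled out the substitutions (use of the second identity in \eqref{0-integral}, the sign flip $w({\boldsymbol 0^*})=-\ic ab$, and the quasi-periodicity conversion $\theta(c_\rho+2\mathsf K_+)=-e^{-2\pi\ic c_\rho}\theta(c_\rho+2\mathsf K_-)$) that the paper leaves implicit. One small imprecision: in the degenerate case $[c_\rho]=[0]$ the identities $T_0\equiv1$ and $T_1\equiv1/\Phi$ hold only up to constant multiples and factors of $e^{2\pi\ic m\int\Omega}$ (unless $c_\rho=0$ exactly), but since the ratio $T_1/T_0$ is still a constant multiple of $1/\Phi$ and $\Phi'({\boldsymbol 0^*})=0$, your conclusion $Z_n=0$ is unaffected.
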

\begin{proof}
The proof is the same as in the previous lemma.
\end{proof}

\begin{lem}
Let \( \sigma_0,\sigma_1 \) be as in \eqref{Arhon}. When \( |\pi(\z_k)|<\infty \), it holds that
\begin{equation}
\label{XnYn}
Y_nX_n^{-1} = \sigma_{\imath(n)}e^{\pi\ic c_\rho}\frac{\sqrt{a^2+b^2}}2\frac{\Phi\big(\z_{\imath(n)}\big)}{\Phi^2\big({\boldsymbol 0}\big)}
\end{equation}
and
\begin{equation}
\label{XnZn}
Z_nX_n^{-1} = \sigma_{\imath(n)}e^{-\pi\ic c_\rho}\frac{\sqrt{a^2+b^2}}2\frac{\Phi\big(\z_{\imath(n)}\big)}{\Phi^2\big({\boldsymbol 0^*}\big)},
\end{equation}
where \( X_n \), \( Y_n \), and \( Z_n \) are given by \eqref{product-Psis}, \eqref{T-ratio}, and \eqref{T-ratio1}, respectively.
\end{lem}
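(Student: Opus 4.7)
The plan is to substitute the closed-form evaluations \eqref{product-Psis-V}, \eqref{T-ratio-V}, and \eqref{T-ratio1-V} directly into the ratios $Y_n/X_n$ and $Z_n/X_n$, observe the cancellation of the factor $\theta^2(c_\rho)/\theta^2(0)$, and then reconcile the resulting expression with the right-hand sides of \eqref{XnYn} and \eqref{XnZn} by a short case analysis on the parity $\imath(n)$, using \eqref{ProdPhizk} to dispatch the odd case.

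Concretely, dividing \eqref{T-ratio-V} by \eqref{product-Psis-V} gives
\[
Y_nX_n^{-1} = (-1)^{l_0+m_0}\,\frac{\sqrt{a^2+b^2}}{2}\,e^{\pi\ic c_\rho}\,\frac{\Phi(\z_0)\,\Phi^{2\imath(n)}(\z_1)}{\Phi^2(\boldsymbol 0)},
\]
so \eqref{XnYn} reduces to the identity
\[
(-1)^{l_0+m_0}\,\Phi(\z_0)\,\Phi^{2\imath(n)}(\z_1) = \sigma_{\imath(n)}\,\Phi\big(\z_{\imath(n)}\big),\qquad \sigma_k=(-1)^{l_k+m_k+k}.
\]
For $\imath(n)=0$ this is tautological. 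For $\imath(n)=1$ it becomes
\[
\Phi(\z_0)\Phi(\z_1) = (-1)^{l_0-l_1+m_0-m_1+1},
\]
which is exactly \eqref{ProdPhizk}. The same argument applied to \eqref{T-ratio1-V} yields \eqref{XnZn}, with $\Phi^2(\boldsymbol 0)$ replaced by $\Phi^2(\boldsymbol 0^*)$ and $e^{\pi\ic c_\rho}$ replaced by $e^{-\pi\ic c_\rho}$; the combinatorial step is identical since $X_n$ does not involve the choice of base point $\boldsymbol 0$ or $\boldsymbol 0^*$.

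It remains to treat the degenerate case $|\pi(\z_k)|=\infty$. Then Lemmas~\ref{lem:4.9} and \ref{lem:4.10} give $Y_n=Z_n=0$, while on the right-hand sides of \eqref{XnYn} and \eqref{XnZn} one has $\Phi(\z_{\imath(n)})=\Phi\big(\infty^{(1)}\big)=0$ in view of the equivalence recorded at the start of the proof of Proposition~\ref{prop:N} (either $\z_0=\infty^{(1)}$ or $\z_1=\infty^{(1)}$ occurs, depending on the class of $c_\rho$, and the other lies over infinity on the opposite sheet). Thus both sides vanish and the identities hold trivially.

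The only step that is not mere bookkeeping is the verification that $\sigma_{\imath(n)}$, as built from the integer representatives $l_k,m_k$ in \eqref{lms}, matches the sign produced by \eqref{ProdPhizk}; this is why the proof of \eqref{ProdPhizk} was isolated earlier. Once that identity is in hand, \eqref{XnYn} and \eqref{XnZn} are immediate consequences of substitution, so I do not anticipate any genuine analytic obstacle here.
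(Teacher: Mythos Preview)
Your proof is correct and follows the same route as the paper: divide \eqref{T-ratio-V} (resp.\ \eqref{T-ratio1-V}) by \eqref{product-Psis-V}, cancel the common factor \(\theta^2(c_\rho)/\theta^2(0)\), and reduce the parity-dependent sign to \eqref{ProdPhizk}. The paper's own proof is a single sentence pointing to exactly these ingredients.

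One small remark: the final paragraph on the degenerate case \(|\pi(\z_k)|=\infty\) is unnecessary, since the lemma is stated only under the hypothesis \(|\pi(\z_k)|<\infty\) (indeed \eqref{product-Psis-V} is derived only in that case). Your claim there that \(\Phi(\z_{\imath(n)})=0\) is also not quite right without the further restriction \(n\in\N_{\rho,\varepsilon}\): when \(|\pi(\z_k)|=\infty\) one of \(\z_0,\z_1\) equals \(\infty^{(0)}\), for which \(\Phi\) has a pole, so the right-hand side is not automatically zero for both parities. Simply delete that paragraph.
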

\begin{proof}
The claims follow immediately from \eqref{product-Psis-V}, \eqref{T-ratio-V}, \eqref{T-ratio1-V}, and \eqref{ProdPhizk}.
\end{proof}

\section{Proof of Proposition~\ref{prop:szego}}
\label{sec:5}

It follows from \eqref{cauchy-kernel} that \( \Omega_{\z, \z^*} = - \Omega_{\z^*, \z} \) for all $\z \in \RS$ such that $\pi(\z) \in \C$ and therefore \( S_{\rho}(\z) S_{\rho}(\z^*) \equiv 1 \) for such \( \z \). Clearly,  this relation extends to the points on top of infinity by continuity. It is also immediate from 
\eqref{szego} and \eqref{cauchy-kernel} that
\begin{multline}
\label{szego-flat}
S_{\rho}\big(z^{(0)}\big) = \exp \left \{ - \sum_{i = 1}^{4} \dfrac{w(z)}{2 \pi\ic} \int_{\Delta_i} \dfrac{\log (\rho_i w_+)(s)}{s - z} \dfrac{\dd s}{w_{|\Delta_i+}(s)} \right \} \times \\ \times \exp \big \{ 2\pi \ic (wH)(z) c_{\rho}\big \},
\end{multline}
where, for emphasis, we write $w_{|\Delta_i+}(s)$ for \( w_+(s) \)  on $s\in\Delta_i^\circ$ and
\begin{equation}
\label{H}
H(z) := \dfrac{1}{2 \pi \ic} \int_{\pi(\ualpha)} \dfrac{\dd t}{(t - z) w(t)}.
\end{equation}
Relations \eqref{S-jump} now easily follow from \eqref{szego-flat},  \eqref{H}, and Plemelj-Sokhotski formulae \cite[equations (4.9)]{Gakhov}. As for the behavior near $a_i$, note that by \cite[equation (8.8)]{Gakhov}, the function $(wH)(z)$ is bounded as $z \to a_i$. Furthermore, \cite[equations (8.8) and (8.35)]{Gakhov} yield that
\[
-\dfrac{w(z)}{2\pi \ic} \int_{\Delta_i} \dfrac{\log (\rho_i w_+)(s)}{s - z} \dfrac{\dd s}{w_{|\Delta_i+}(s)} = -\dfrac{1}{2} \log (z - a_i)^{\alpha_i + 1/2} + \mathcal O(1).
\]
Since the above integral is the only one with the singular contribution  around \( a_i \), the validity of the top line in \eqref{Srho-ai} follows. As for the behavior near the origin, note that \( \lim_{\mathcal Q_j\in z\to0} w(z) = (-1)^{j-1}\ic ab \), where, as before, \( \mathcal Q_j \) stands for the \( j \)-th quadrant. Recall that each segment \( \Delta_i \) is oriented towards the origin, see Figure~\ref{f:cross}. Hence, it follows from  \cite[equation (8.2)]{Gakhov} that
\begin{multline*}
-\dfrac{w(z)}{2\pi \ic} \int_{\Delta_i}  \dfrac{\log (\rho_i w_+)(s)}{s - z} \dfrac{\dd s}{w_{\Delta_i + }(s)} = -\dfrac{w(z)}{2 \pi \ic}\dfrac{\log (\rho_i w_+)(0)}{w_{|\Delta_i +}(0)} \log (z) + F_i(z) \\ = \frac{(-1)^{j+i}}{2\pi\ic}\log (\rho_i w_+)(0)\log (z) + F_i(z), \quad z\in \mathcal Q_j,
\end{multline*}
where \( F_i(z) \) is a bounded function around the origin tending to a definite limit as \( z\to 0 \). Thus, summing over $i$ yields
\[
-\dfrac{w(z)}{2\pi \ic} \int_\Delta  \dfrac{\log (\rho_i w_+)(s)}{s - z} \dfrac{\dd s}{w_+(s)} = (-1)^j\nu\log(z) + \sum_{i=1}^4F_i(z), \quad z\in \mathcal Q_j,
\]
where \( \nu \) was defined in \eqref{nu} and we used \eqref{logw}. Since $(wH)(z)$ is holomorphic around the origin, the second line in \eqref{Srho-ai} follows.

\section{Proofs of Theorems~\ref{thm:asymptotics} and~\ref{thm:pade}}
\label{sec:6}

\subsection{Initial RH problem}
Just as was first done by Fokas, Its, and Kitaev \cite{FIK91,FIK92}, we connect the orthogonal polynomials $Q_n(z)$ to a \( 2\times2 \) matrix Riemann-Hilbert problem. To this end, suppose that the index $n$ is such that
\begin{equation}
\label{normal-index}
\deg Q_n = n \qandq R_{n-1}(z) \sim z^{-n} \qasq z \to \infty,
\end{equation}
where $R_n(z)$ is given by \eqref{linear-system}. Let
\begin{equation}
\label{Y-matrix}
\boldsymbol Y(z) := \left(  \begin{matrix}
Q_n(z) & R_n(z)\\
k_{n-1} Q_{n-1}(z) & k_{n-1}R_{n-1}(z)
\end{matrix} \right),
\end{equation}
where $k_{n-1}$ is a constant such that $k_{n-1}R_{n-1}(z) = z^{-n} (1 + o(1))$ near infinity. Then $\boldsymbol Y(z)$ solves the following Riemann-Hilbert problem (\rhy):
\begin{itemize}
\label{rhy}
	\item[(a)] $\boldsymbol Y(z)$ is analytic in $\C \setminus \Delta$ and $\lim_{z \to \infty} \boldsymbol Y(z)z^{-n \sigma_3} = \boldsymbol I$ \footnote{Hereafter, we set $\sigma_3 := \left(\begin{matrix} 1 & 0 \\ 0 & -1 \end{matrix}\right)$ and \( \boldsymbol I \) to be the identity matrix.}.
	\item[(b)] $\boldsymbol Y(z)$ has continuous traces on $\Delta^\circ$ that satisfy 
	\[
	\boldsymbol Y_{+}(s) = \boldsymbol Y_- (s)\left( \begin{matrix}
	1 & \rho(s) \\ 0 & 1
	\end{matrix} \right), \quad s\in\Delta^\circ.
	\]
	\item[(c)] \( \boldsymbol Y(z) \) is bounded around the origin and
	\[
	\boldsymbol Y(z) = \left \{  \begin{array}{rl} 
	\mathcal O\left(\begin{matrix}
	1 & 1\\ 1 & 1
	\end{matrix}\right)  & \text{ if } \alpha_i > 0, \medskip \\
	\mathcal O\left(\begin{matrix}
	 1 & \log |z - a_i|\\ 1 & \log |z - a_i|
	\end{matrix}\right)  & \text{ if } \alpha_i = 0,\medskip \\
	\mathcal O\left(\begin{matrix}
	1 & |z - a_i|^{\alpha_i}\\ 1 &  |z - a_i|^{\alpha_i}
	\end{matrix}\right)  & \text{ if }-1 < \alpha_i < 0,
	\end{array}\right.
	\]
	as \( z\to a_i \) for each \( i \in\{1,2,3,4\} \).
\end{itemize}
Indeed, property \hyperref[rhy]{\rhy}(a) is an immediate consequence of \eqref{normal-index}. The jump relations in \hyperref[rhy]{\rhy}(b) follow from \eqref{hatrho}, \eqref{linear-system}, and an application of the Plemelj-Sokhotski formulae. Behavior of the Cauchy integrals around the contours of integration, see  \cite[Section 8]{Gakhov} and \eqref{linear-system} yield \hyperref[rhy]{\rhy}(c) (to deduce boundedness around the origin one needs to utilize the third condition in the definition of the class \( \mathcal W_1 \)). 

On the other hand, it also can be shown that if a solution of \hyperref[rhy]{\rhy} exists, then it must be of the form \eqref{Y-matrix} with the diagonal entries satisfying \eqref{normal-index} (see, for example, \cite[Lemma 1]{ApY15}).

In what follows we prove solvability of \hyperref[rhy]{\rhy} for all \( n\in\N_{\rho,\varepsilon} \) large enough via the matrix steepest descent method developed by Deift and Zhou~\cite{DZ93}.

\subsection{Opening of the Lenses}
\label{ss:ol}

Let \( \delta_0>0 \) be small enough so that all the functions \( \rho_i^*(z) \) are holomorphic in some neighborhood of \( \{|z|\leq \delta_0\} \). Define \( \tilde\Delta_i \) and \( \tilde\Delta_i^\circ \) to be the closed and open segments connecting the origin and \( \delta_0e^{(2i-1)\pi\ic/4} \), \( i\in\{1,2,3,4\} \), that are oriented towards the origin.
\begin{figure}[!ht]
	
\begin{tikzpicture}
\draw (3, 0) -- (-3,0);
\draw (0, 3) -- (0, -3);
\node[rotate = 90] at (0, 1){$<$};
\node[rotate = 90] at (0, -1){$>$};
\node at (1, 0){$<$};
\node at (-1, 0){$>$};
%
\draw (0.75, 0.75) -- (-0.75, -0.75);
\draw (0.75, -0.75) -- (-0.75, 0.75);
\draw (0.75, 0.75) to[out = 45, in = 135] (3, 0);
\draw (0.75, -0.75) to[out = -45, in = -135] (3, 0);
\draw (-0.75, 0.75) to[out = 135, in = 45] (-3, 0);
\draw (-0.75, -0.75) to[out = -135, in = -45] (-3, 0);
\draw (0.75, 0.75) to[out = 45, in = -45] (0, 3);
\draw (-0.75, 0.75) to[out = 135, in = -135] (0, 3);
\draw (0.75, -0.75) to[out = -45, in = 45] (0, -3);
\draw (-0.75, -0.75) to[out = -135, in = 135] (0, -3);
%
\node at (3, 0){\textbullet}; \node[right] at (3, 0){$a_1$};
\node at (-3, 0){\textbullet}; \node[left] at (-3, 0){$a_3$};
\node at (0, 3){\textbullet}; \node[above] at (0, 3){$a_2$};
\node at (0, -3){\textbullet}; \node[below] at (0, -3){$a_4$};
\node at (0.75, 0.75){\textbullet}; 
\node at (-0.75, 0.75){\textbullet}; 
\node at (0.75, -0.75){\textbullet}; 
\node at (-0.75, -0.75){\textbullet}; 
\node at (2.5, 1){$\Gamma_{1-}$};
\node at (2.5, -1){$\Gamma_{1+}$};
\node at (1, 2.5){$\Gamma_{2+}$};
\node at (-1, 2.5){$\Gamma_{2-}$};
\node at (-2.5, 1){$\Gamma_{3+}$};
\node at (-2.5, -1){$\Gamma_{3-}$};
\node at (1, -2.5){$\Gamma_{4-}$};
\node at (-1, -2.5){$\Gamma_{4+}$};
\node[right] at (0.5, 0.4){$\tilde{\Delta}_1$};
\node[left] at (-0.5, 0.4){$\tilde{\Delta}_2$};
\node[left] at (-0.5, -0.4){$\tilde{\Delta}_3$};
\node[right] at (0.5, -0.4){$\tilde{\Delta}_4$};
\node at (0.4, 1.6){\(\Omega_{2+}\)};
\node at (-0.4, 1.6){\(\Omega_{2-}\)};
\node at (-0.4, -1.6){\(\Omega_{4+}\)};
\node at (0.4, -1.6){\(\Omega_{4-}\)};
\node at (2, -0.4){\(\Omega_{1+}\)};
\node at (2, 0.4){\(\Omega_{1-}\)};
\node at (-2, 0.4){\(\Omega_{3+}\)};
\node at (-2, -0.4){\(\Omega_{3-}\)};
\end{tikzpicture}
\caption{\small The arcs $\Delta_i, \ \tilde{\Delta}_i$ and \( \Gamma_{i\pm} \), and domains \( \Omega_{i\pm} \).}
\label{lens}
\end{figure}
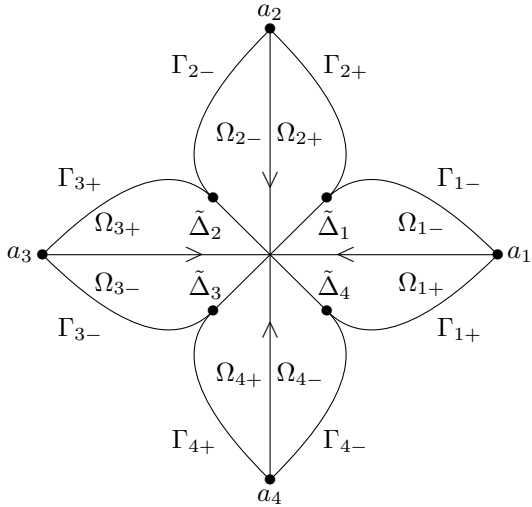
Further, let \( \Gamma_{i-},\Gamma_{i+} \) be open smooth arcs that lie within the domain of holomorphy of \( \rho_i^*(z) \) and connect \( a_i \) to \( \delta_0e^{(2i-1)\pi\ic/4},\delta_0e^{(2i-3)\pi\ic/4} \), respectively. We orient \( \Gamma_{i\pm} \) away from \( a_i \) and assume that no open arcs \( \Delta_i^\circ,\tilde\Delta_i^\circ,\Gamma_{i\pm} \) intersect, see Figure~\ref{lens}. We denote by \( \Omega_{i\pm} \) the domain partially bounded by \( \Delta_i \) and \( \Gamma_{i\pm} \). Let 
\begin{equation}
\label{eq:x}
\boldsymbol X(z) := \boldsymbol Y(z) \left\{
\begin{array}{rl}
\displaystyle  \left(\begin{matrix} 1 & 0 \smallskip \\ \mp 1/\rho_i(z) & 1 \end{matrix}\right), & z\in\Omega_{i\pm} \smallskip, \\
\boldsymbol I, & z\not\in\overline\Omega_{i+}\cup\overline\Omega_{i-}.
\end{array}
\right.
\end{equation}
Then $\boldsymbol X(z)$ satisfies the following Riemann-Hilbert problem (\rhx):
\begin{itemize}
	\label{rhx}
	\item[(a)] \( \boldsymbol X(z) \) is analytic in \( \C\setminus\cup_i(\Delta_i \cup \tilde{\Delta}_i\cup  \Gamma_{i\pm}) \) and \( \displaystyle \lim_{z\to\infty} \boldsymbol X(z)z^{-n\sigma_3} = \boldsymbol I \);
	\item[(b)] \( \boldsymbol X(z) \) has continuous traces on each \( \Delta_i^\circ, \ \tilde{\Delta}_i^\circ\), and \( \Gamma_{i\pm} \) that satisfy 
	\[
	\boldsymbol X_+(s) = \boldsymbol X_-(s)
	\left\{
	\begin{array}{rl}
	\displaystyle \left(\begin{matrix} 1 & 0 \smallskip \\ 1/ \rho_i(s) & 1 \end{matrix}\right), & s\in\Gamma_{i+}\cup\Gamma_{i-}, \medskip \\
	\displaystyle \left(\begin{matrix} 0 & \rho_i(s) \\ -1/\rho_i(s) & 0 \end{matrix} \right), & s\in\Delta_i^\circ, \medskip \\
	\left(\begin{matrix}
	1& 0 \smallskip\\
	\dfrac{1}{\rho_i(s)} + \dfrac{1}{\rho_{i + 1}(s)} & 1
	\end{matrix}\right), & s\in\tilde \Delta_i^\circ,
	\end{array} \right. \\
	\]
	where $i \in\{ 1, 2, 3, 4 \}$ and $\rho_5 := \rho_1$.
	\item[(c)] \( \boldsymbol X(z) \) is bounded around the origin and behaves like
	\[
	\boldsymbol X(z) = \left \{  \begin{array}{rl} 
	\mathcal O\left(\begin{matrix}
	1 & 1\\ 1 & 1
	\end{matrix}\right)  & \text{ if } \alpha_i > 0, \medskip \\
	\mathcal O\left(\begin{matrix}
	1 & \log |z - a_i|\\ 1 & \log |z - a_i|
	\end{matrix}\right)  & \text{ if } \alpha_i = 0,\medskip \\
	\mathcal O\left(\begin{matrix}
	1 & |z - a_i|^{\alpha_i}\\ 1 &  |z - a_i|^{\alpha_i}
	\end{matrix}\right)  & \text{ if }-1 < \alpha_i < 0,
	\end{array}\right.
	\]  as $z \to a_i$ from outside the lens while from inside the lens, 
	\[
	\boldsymbol X(z) = \left \{ \begin{array}{rl} 
	\mathcal O\left(\begin{matrix} |z - a_i|^{-\alpha_i} & 1 \smallskip \\ |z - a_i|^{-\alpha_i} & 1\end{matrix}\right) & \text{if } \alpha_i >0, \medskip \\
	\mathcal O\left(\begin{matrix}
	1 & \log |z - a_i|\\ 1 & \log |z - a_i|
	\end{matrix}\right)  & \text{ if } \alpha_i = 0,\medskip \\
	\mathcal O\left(\begin{matrix} 1 & |z-a_i|^{\alpha_i} \smallskip \\ 1 & |z-a_i|^{\alpha_i}\end{matrix}\right) & \text{if } -1 < \alpha_i < 0.
	
	\end{array} \right.
	\]
	
\end{itemize}

The following observation can be easily checked: \hyperref[rhx]{\rhx} is solvable if and only if \hyperref[rhy]{\rhy} is solvable. When solutions of \hyperref[rhx]{\rhx} and \hyperref[rhy]{\rhy} exist, they are unique and connected by \eqref{eq:x}.

\subsection{Global Parametrix}
\label{sec:6.3}

Let \( \Psi_n(\z) \) be given by \eqref{Psin}. For each \( n\in\N_{\rho,\varepsilon} \), define
\begin{equation}
\label{N}
\boldsymbol N(z) := \left(\begin{matrix}
\gamma_n & 0\\
0 & \gamma^*_{n-1}
\end{matrix}\right)\left(\begin{matrix} \Psi_n\big(z^{(0)}\big) & \Psi_n\big(z^{(1)}\big)/ w(z)\\
\Psi_{n-1}\big(z^{(0)}\big)  & \Psi_{n-1}\big(z^{(1)}\big) / w(z)
\end{matrix}\right),
\end{equation}
where the constants \( \gamma_n \) and \( \gamma_{n-1}^* \) are defined by the relations
\begin{equation}
\label{gamma-constants}
\lim_{z \to \infty} \gamma_nz^{-n} \Psi_n\big(z^{(0)}\big)=1 \qandq  \lim_{z \to \infty} \gamma_{n-1}^*z^{n} \Psi_{n-1}\big(z^{(1)}\big)/w(z) =1.
\end{equation}
Such constants do exist by the very definition of the sequence \( \N_{\rho,\varepsilon} \) in \eqref{Nrhoepsilon} and the fact that the above normalization of \( \Psi_{n-1}(\z) \) is possible if and only if the above normalization of \( \Psi_n(\z) \) is possible, see the proof of Proposition~\ref{prop:N}. The product $\gamma_n\gamma^*_{n-1}$ assumes only two necessarily finite and non-zero values depending on the parity of \( n \) (when \( |\pi(\z_k)|<\infty \), it is equal to \( X_n^{-1} \), see \eqref{product-Psis}). The matrix \( \boldsymbol N(z) \) solves the following Riemann-Hilbert problem (\rhn):
\begin{itemize}
\label{rhn}
\item[(a)] \( \boldsymbol N(z) \) is analytic in \( \C\setminus\Delta \) and \( \displaystyle \lim_{z\to\infty} \boldsymbol N(z)z^{-n\sigma_3}= \boldsymbol I\);
\item[(b)] \( \boldsymbol N(z) \) has continuous traces on \( \Delta^\circ \) that satisfy 
\[
\boldsymbol N_+(s) = \boldsymbol N_- (s)
\displaystyle \left(\begin{matrix} 0 & \rho(s) \\ -1/ \rho(s) & 0\end{matrix}\right), \quad s\in\Delta^\circ;
\]
\item[(c)] \( \boldsymbol N(z) \) satisfies
\[
\boldsymbol N(z) =  \mathcal O\left(\begin{matrix} |z-a_i|^{-(2\alpha_i + 1)/4} & |z-a_i|^{(2\alpha_i - 1)/4} \smallskip \\ |z-a_i|^{-(2\alpha_i + 1)/4} & |z-a_i|^{(2\alpha_i - 1)/4}\end{matrix}\right) \qasq z\to a_i,
\]
$i\in \{1, 2, 3, 4 \}$, and 
\[
\boldsymbol N(z) =  \mathcal O\left(\begin{matrix} |z|^{(-1)^j \mathrm{Re}(\nu)} & |z|^{(-1)^{j + 1} \mathrm{Re}(\nu)} \smallskip \\ |z|^{(-1)^j \mathrm{Re}(\nu)} & |z|^{(-1)^{j + 1} \mathrm{Re}(\nu) }\end{matrix}\right) \qasq z \to 0,
\]
where $j\in \{1, 2, 3, 4 \}$ is the number of the quadrant from which $z \to 0$ an $\nu$ is given by \eqref{nu}.
\end{itemize}
Indeed, \hyperref[rhn]{\rhn}(a) holds by construction, while \hyperref[rhn]{\rhn}(b,c) follow from \eqref{Psin-jump} and \eqref{Srho-ai}, respectively (notice that the actual exponents in \hyperref[rhn]{\rhn}(c) will change when the considered point happens to coincide with \( z_{\imath(n)} \) or \( z_{\imath(n-1)} \)). Notice also that \( \det(\boldsymbol N(z)) \equiv 1 \) since this is an entire function (it clearly has no jumps and it can have at most square root singularities at the points \( a_i \)) that converges to 1 at infinity.

For later calculations it will be convenient to set
\begin{equation}
\label{M}
\boldsymbol M^\star(z) :=  \left(\begin{matrix} (S_{\rho}T_{\imath(n)})(z^{(0)}) & (S_{\rho}T_{\imath(n)})(z^{(1)})/ w(z)\\
(S_{\rho}T_{\imath(n-1)}/\Phi)(z^{(0)}) & (S_{\rho}T_{\imath(n-1)}/\Phi)(z^{(1)}) /  w(z)
\end{matrix}\right),
\end{equation}
and \( \boldsymbol M(z) := \left( \boldsymbol I + \boldsymbol L_\nu/z\right)\boldsymbol M^\star(z) \), where \( \boldsymbol L_\nu \) is a certain constant matrix with zero trace and determinant defined further below in \eqref{defL}. Observe that \( \boldsymbol N(z)=\boldsymbol C \boldsymbol M^\star(z) \boldsymbol D(z) \), where
\begin{equation}
\label{CD}
\boldsymbol C := \left(\begin{matrix}
\gamma_n & 0\\
0 & \gamma^*_{n-1}
\end{matrix}\right) \qandq \boldsymbol D(z):= \Phi^{n \sigma_3}\big(z^{(0)}\big).
\end{equation}
When \( \re(\nu)\in(-1/2,1/2) \), it is possible to take \( \boldsymbol L_\nu \) to be the zero matrix, but this would worsen the error rates in \eqref{Qnasymp} and \eqref{Rnasymp}. When \( \re(\nu)=1/2 \), our analysis necessitates introduction of \( \boldsymbol L_\nu \). Notice that neither the normalization of \( \boldsymbol M(z) \) at infinity nor its determinate depend on \( \boldsymbol L_\nu \). In fact, it holds that \( \det(\boldsymbol M(z)) = \det(\boldsymbol M^\star(z)) = (\gamma_n\gamma_{n-1}^*)^{-1} \).

\subsection{Local Parametrix around \( a_i \)}

Let \( U_i \) be a disk around \( a_i \) of small enough radius so that \( \rho_i^*(z) \) is holomorphic around \( \overline U_i \), \( i\in\{1,2,3,4\} \). In this section we construct solution of \hyperref[rhx]{\rhx} locally in each \( U_i \). More precisely, we seeking a solution of the following local Riemann-Hilbert problem (\rhpi):
\begin{itemize}
\label{rhp}
\item[(a,b,c)] $\boldsymbol P_{a_i}(z)$ satisfies \hyperref[rhx]{\rhx}(a,b,c) within \( U_i\);
\item[(d)] \( \boldsymbol P_{a_i}(s) = \boldsymbol M(s)\big(\boldsymbol I+\boldsymbol{\mathcal O}(1/n) \big)\boldsymbol D(s) \) uniformly for \( s\in\partial U_i \).
\end{itemize}

We shall only construct a solution of \hyperref[rhp]{\rhp} as other constructions are almost identical.

\subsubsection{Model Problem}

Below, we always assume that the real line as well as its subintervals is oriented from left to right. Further, we set
\[
I_\pm:=\big\{z:~\arg(\zeta)=\pm2\pi/3\big\},
\]
where the rays $I_\pm$ are oriented towards the origin. Given $\alpha>-1$, let $\boldsymbol\Psi_\alpha(\zeta)$ be a matrix-valued function such that 
\begin{itemize}
\label{rhpsiA}
\item[(a)] $\boldsymbol\Psi_\alpha(\zeta)$ is analytic in $\C\setminus\big(I_+\cup I_-\cup(-\infty,0]\big)$;
\item[(b)] $\boldsymbol\Psi_\alpha(\zeta)$ has continuous traces on $I_+\cup I_-\cup(-\infty,0)$ that satisfy
\[
\boldsymbol\Psi_{\alpha+} = \boldsymbol\Psi_{\alpha-}
\left\{
\begin{array}{rll}
\left(\begin{matrix} 0 & 1 \\ -1 & 0 \end{matrix}\right) & \text{on} & (-\infty,0), \medskip \\
\left(\begin{matrix} 1 & 0 \\ e^{\pm\pi\mathrm{i}\alpha} & 1 \end{matrix}\right) & \text{on} & I_\pm;
\end{array}
\right.
\]
\item[(c)] as $\zeta\to0$  it holds that
\[
\boldsymbol\Psi_\alpha(\zeta) = \mathcal{O}\left( \begin{matrix} |\zeta|^{\alpha/2} & |\zeta|^{\alpha/2} \\ |\zeta|^{\alpha/2} & |\zeta|^{\alpha/2} \end{matrix} \right) \quad \text{and} \quad \boldsymbol\Psi_\alpha(\zeta) = \mathcal{O}\left( \begin{matrix} \log|\zeta| & \log|\zeta| \\ \log|\zeta| & \log|\zeta| \end{matrix} \right) 
\]
when $\alpha<0$ and $\alpha=0$, respectively, and
\[
\boldsymbol\Psi_\alpha(\zeta) = \mathcal{O}\left( \begin{matrix} |\zeta|^{\alpha/2} & |\zeta|^{-\alpha/2} \\ |\zeta|^{\alpha/2} & |\zeta|^{-\alpha/2} \end{matrix} \right) \quad \text{and} \quad \boldsymbol\Psi_\alpha(\zeta) = \mathcal{O}\left( \begin{matrix} |\zeta|^{-\alpha/2} & |\zeta|^{-\alpha/2} \\ |\zeta|^{-\alpha/2} & |\zeta|^{-\alpha/2} \end{matrix} \right)
\]
when $\alpha>0$, for $|\arg(\zeta)|<2\pi/3$ and $2\pi/3<|\arg(\zeta)|<\pi$, respectively;
\item[(d)] it holds uniformly in $\C\setminus\big(I_+\cup I_-\cup(-\infty,0]\big)$ that
\[
\boldsymbol\Psi_\alpha(\zeta) = \boldsymbol S(\zeta)\left(\boldsymbol I+\boldsymbol{\mathcal O}\left(\zeta^{-1/2}\right)\right)\exp\left\{2\zeta^{1/2}\sigma_3\right\},
\]
where \( \displaystyle \boldsymbol S(\zeta) := \frac{\zeta^{-\sigma_3/4}}{\sqrt2}\left(\begin{matrix} 1 & \ic \\ \ic & 1 \end{matrix}\right) \) and we take the principal branch of \( \zeta^{1/4} \).
\end{itemize}
Explicit construction of this matrix can be found in \cite{KMcLVAV04} (it uses modified Bessel and Hankel functions). Observe that
\begin{equation}
\label{S}
\boldsymbol S_+(\zeta) = \boldsymbol S_-(\zeta)\left(\begin{matrix} 0 & 1 \\ \ -1 & 0 \end{matrix}\right),
\end{equation}
since the principal branch of \( \zeta^{1/4} \) satisfies $\zeta_+^{1/4}=\ic\zeta^{1/4}_-$. Also notice that the matrix \( \sigma_3\boldsymbol\Psi_\alpha(\zeta)\sigma_3 \) satisfies \hyperref[rhpsiA]{\rhpsiA} only with the reversed orientation of \( (-\infty,0] \) and  \( I_\pm \).

\subsubsection{Conformal Map}

Since \( w(z) \) has a square root singularity at \( a_1 \) and satisfies $w_+(s)=-w_-(s)$, $s\in\Delta$, the function
\begin{equation}
\label{zeta-a-1}
\zeta_{a_1}(z) := \left(\dfrac{1}{2}\int_{a_1}^z\frac{s\dd s}{w(s)}\right)^2, \quad z\in U_1,
\end{equation}
is holomorphic in $U_1$ with a simple zero at $a_1$. Thus, the radius of $U_1$ can be made small enough so that $\zeta_{a_1}(z)$ is conformal on $\overline U_1$. Observe that \( s\dd s/w_\pm(s) \) is purely imaginary on \( \Delta_1^\circ \) and  therefore \( \zeta_{a_1}(z) \) maps \( \Delta_1 \cap U_1 \) into the negative reals. It is also rather obvious that \( \zeta_{a_1}(z) \) maps the interval \( (a_1,\infty)\cap U_1 \) into the positive reals. As we have had some freedom in choosing the arcs \( \Gamma_{1 \pm} \), we shall choose them within \( U_1 \) so that \( \Gamma_{1-} \) is mapped into \( I_+ \) and \( \Gamma_{1+} \) is mapped into \( I_- \). Notice that the orientation of the images of \( \Delta_1,\Gamma_{1+},\Gamma_{1-} \) under \( \zeta_{a_1}(z) \) are opposite from the ones of \( (-\infty,0],I_-,I_+ \).

In what follows, we understand that \( \zeta_{a_1}^{1/2}(z) \) stands for the branch given by the expression in the parenthesis in \eqref{zeta-a-1}. 

\subsubsection{Matrix $\boldsymbol P_{a_1}$}

According to the definition of the class \( \mathcal W_1 \), it holds that
\[
\rho(z)=\rho_1^*(z)(a_1 - z)^{\alpha_{1}}, \quad z\in U_1,
\]
where $\rho_1^*(z)$ is non-vanishing and holomorphic in $U_{1}$ and \( (a_1-z)^{\alpha_1} \) is the branch holomorphic in \( U_{1}\setminus [a_1,\infty)\) and positive on \( \Delta_1 \). Define
\[
r_{a_1}(z) := \sqrt{\rho_1^*(z)}(z - a_{1})^{\alpha_1/2}, \quad z\in U_1\setminus\Delta_1,
\]
where \( (z-a_1)^{\alpha_1/2} \) is the principal branch. It clearly holds that
\[
 (z - a_1)^{\alpha_1} = e^{\pm\pi\ic\alpha_1}(a_1-z)^{\alpha_1}, \quad z\in U_{1}^\pm,
\]
where \( U_{1}^\pm:= U_1\cap \{\pm\im(z)>0\} \). Then 
\[
\left\{
\begin{array}{ll}
r_{a_1+}(s)r_{a_1-}(s) = \rho(s), & s\in \Delta_1 \cap U_1, \medskip \\
r_{a_1}^2(z) = \rho(z)e^{\pm\pi\ic\alpha_1}, & z\in U^{\pm}_{1 }.
\end{array}
\right.
\]
The above relations and \hyperref[rhpsiA]{\rhpsiA}(a,b,c) imply that
\begin{equation}
\label{Pa}
\boldsymbol P_{a_1}(z) := \boldsymbol E_{a_1}(z) \sigma_3\boldsymbol\Psi_{\alpha_1}\left(n^2\zeta_{a_1}(z)\right)\sigma_3 r_{a_1}^{-\sigma_3}(z)
\end{equation}
satisfies  \hyperref[rhp]{\rhp}(a,b,c) for any holomorphic matrix $\boldsymbol E_{a_1}(z)$.

\subsubsection{Matrix $\boldsymbol E_{a_1}$}

Now we choose \( \boldsymbol E_{a_1}(z) \) so that \hyperref[rhp]{\rhp}(d) is fulfilled. To this end, denote by \( V_1,V_2,V_3 \) the sectors within \( U_1 \) delimited by \( \pi(\ualpha)\cup\pi(\ubeta) \), \( \pi(\ubeta)\cup\Delta_1 \), and \( \Delta_1\cup\pi(\ualpha) \), respectively, see Figure~\ref{f:cross}. Let $\gamma \subset \C\setminus\Delta$ be a path from $a_3$ to $a_1$ that does not intersect $\pi(\ualpha),\pi(\ubeta)$. Further, let \( \boldsymbol\gamma:=\pi^{-1}(\gamma) \) be a cycle oriented so that \( \boldsymbol\gamma^{(0)}:=\boldsymbol \gamma\cap\RS^{(0)} \) proceeds from \( \boldsymbol a_3 \) to \( \boldsymbol a_1 \). Define
\[
K_{a_1}(z) :=  \left\{
\begin{array}{llr}
&\exp\big\{\int_{\boldsymbol\gamma^{(0)}}G\big\} = \exp\{\pi \ic \left( \tau - \omega \right) \} =  1, & z \in V_1, \medskip \\
&\exp\big\{\int_{\boldsymbol\gamma^{(0)}-\ualpha}G\big\} = \exp \{ -\pi \ic (\tau +\omega) \}= -1, & z \in V_2, \medskip \\
&\exp\big\{\int_{\boldsymbol\gamma^{(0)}-\ubeta} G\big\} = \exp \{ \pi \ic (\tau +\omega) \}=-1, & z \in V_3,
\end{array} \right.
\]
where we used the symmetry \( G(\z^*)=-G(\z) \), the fact that \( \boldsymbol \gamma \) is homologous to \( \ualpha+\ubeta \), see Figure~\ref{f:basis}, and \eqref{constants}--\eqref{omtau12}. Recalling the definition of \( \Phi(\z) \) in \eqref{Phi} (the path of integration must lie in \( \RS_{\ualpha,\ubeta} \)), one can see that
\[
\Phi\big(z^{(0)}\big) = K_{a_1}(z)\exp\big\{2\zeta_{a_1}^{1/2}(z)\big\}, \quad z\in V_1\cup V_2\cup V_3.
\]
Clearly, \( |K_{a_1}(z)|=1 \). It now follows from \hyperref[rhpsiA]{\rhpsiA}(d) that
\[
\boldsymbol P_{a_1}(s) = \boldsymbol E_{a_1}(s)\sigma_3\boldsymbol S\big(n^2\zeta_{a_1}(s)\big)\sigma_3 r_{a_1}^{-\sigma_3}(s)K_{a_1}^{-n\sigma_3}(s) \big(\boldsymbol I + \boldsymbol{ \mathcal O}(1/n) \big) \boldsymbol D(s)
\] 
for \( s\in\partial U_1 \). Thus, if the matrix
\[
\boldsymbol E_{a_1}(z) := \boldsymbol M(z) K_{a_1}^{n\sigma_3}(z)r_{a_1}^{\sigma_3}(z) \sigma_3\boldsymbol S^{-1}\big(n^2\zeta_{a_1}(z)\big)\sigma_3
\]
is holomorphic in \( U_1 \), \hyperref[rhp]{\rhp}(d) is clearly fulfilled. The fact that it has no jumps on \( \Delta_1, \pi(\ualpha),\pi(\ubeta) \) follows from \hyperref[rhn]{\rhn}(b), \eqref{S}, \eqref{Phi-jump}, and the definition of \( K_{a_1}(z) \). Thus, it is holomorphic in \( U_1\setminus\{a_1\} \). Since $|r_{a_1}(z)|\sim|z-a_1|^{\alpha_1/2}$, \( \boldsymbol S^{-1}\big(n^2\zeta_{a_1}(z)\big)\sim |z-a_1|^{\sigma_3/4} \), and \( \boldsymbol M(z) \) satisfies \hyperref[rhn]{\rhn}(c) around \( a_1 \), the desired claim follows.

\subsection{Approximate Local Parametrix around the Origin}
\label{sec:4.5}

Let \( 0<\delta\leq\delta_0 \), see Section~\ref{ss:ol}. We can assume that the closure of \( U_\delta:=\{|z| < \delta\} \) is disjoint from \( \pi(\ualpha),\pi(\ubeta) \). In this section we construct an approximate solution of \hyperref[rhx]{\rhx} in \( U_\delta \) when \( \ell<\infty \) and an exact solution of \hyperref[rhx]{\rhx} in \( U_\delta \) when \( \ell=\infty \). 

To this end, let functions \( b_i(z) \), \( i\in\{1,2,3,4\} \), be defined in \( \overline U_{\delta_0} \) by
\begin{equation}
\label{bees}
b_1 := \frac{\rho_1+\rho_2}{\rho_2}, \; b_2 := -\frac{\rho_2+\rho_3}{\rho_4}, \; b_3 := -\frac{\rho_3+\rho_4}{\rho_2}, \; \text{and} \; b_4 := \frac{\rho_1+\rho_4}{\rho_4},
\end{equation}
which are holomorphic and non-vanishing on \( \overline U_\delta \). It follows from item (iv) in the definition of class \( \mathcal W_l \) that
\begin{equation}
\label{approximations}
\frac{b_i(0)}{b_i(z)} - 1 =  \mathcal O\big(z^\ell\big) \quad \text{as} \quad z\to0,  \quad i\in\{1,2,3,4\}.
\end{equation}
Notice that \( b_i(z)\equiv b_i(0) \) when \( \ell=\infty \). Observe also that \( b_1(0)=b_3(0) \) and  \( b_2(0)=b_4(0) \) by item (ii) in the definition of class \( \mathcal W_l \). We are seeking a solution of the following local Riemann-Hilbert problem (\rhpo):
\begin{itemize}
	\label{rhpo}
	\item[(a)] $\boldsymbol P_0(z)$ satisfies \hyperref[rhx]{\rhx}(a) within \( U_\delta \);
	\item[(b)] $\boldsymbol P_0(z)$ satisfies \hyperref[rhx]{\rhx}(b)  within \( U_\delta \), where the jump matrix on each \( \tilde\Delta_i^\circ \) needs to be replaced by
\[
\left(\begin{matrix} 1 & 0 \smallskip\\ 	\dfrac{b_i(0)}{b_i(s)}\left(\dfrac{1}{\rho_i(s)} + \dfrac{1}{\rho_{i + 1}(s)}\right) & 1	 \end{matrix}\right);
\]	
\item[(c)] \( \boldsymbol P_0(s) = \boldsymbol M(s)\big(\boldsymbol I+\boldsymbol{\mathcal O}\big((n\delta^2)^{-1/2-|\re(\nu)|} \big)\big)\boldsymbol D(s) \) uniformly for \( s\in\partial U_\delta \) and \( \delta\leq\delta_0 \).
\end{itemize}

\subsubsection{Model Problem}

A construction, similar the one below, has been introduced in \cite{I81}, see also \cite{DZ93} and the book \cite[Chapter~2]{FokasItsKapaevNovokshenov}, in the context of integrable systems. Unfortunately, the local problem is not stated in the form and generality we need in any of these references. Thus, for the convenience of the reader, we provide an explicit expression for the local parametrix.

Let $s_1, s_2 \in \C$ be independent parameters and let $\nu \in \C, \ \re(\nu) \in \left(-\frac{1}{2}, \frac{1}{2} \right]$ be given by 
\begin{equation}
\label{dep-parameters}
e^{-2\pi \ic \nu} := 1 - s_1 s_2
\end{equation}
(we slightly abuse the notation here as the parameter \( \nu \) has already been fixed in \eqref{nu}; however, we shall use the construction below with parameters \( s_1,s_2 \) such that \eqref{dep-parameters} holds with \( \nu \) from \eqref{nu}). Define constants \( d_1,d_2 \) by
\begin{equation}
\label{d1d2}
d_1 := -s_1\dfrac{ \Gamma(1+\nu)}{\sqrt{2\pi}}  \qandq d_2:= -s_2 e^{\pi \ic \nu} \dfrac{\Gamma(1 - \nu)}{\sqrt{2\pi}},
\end{equation}
where \( \Gamma(z) \) is the standard Gamma function. It follows from the well-known Gamma function identities that
\begin{equation}
\label{productd1d2}
d_1d_2 = \ic \nu.
\end{equation}
Denote by $D_\mu(\zeta)$ the parabolic cylinder function in Whittaker's notations, see \cite[Section~12.2]{DLMF}. It is an entire function with the asymptotic expansion
\begin{equation}
\label{pcf-exp}
D_\mu(\zeta) \sim e^{-\zeta^2/4}\zeta^\mu\sum_{k=0}^\infty\frac{(-1)^k}{\Gamma(k+1)}\frac{\Gamma(\mu+1)}{\Gamma(\mu+1-2k)}\frac1{(2\zeta^2)^k}
\end{equation}
valid uniformly in each \( \left|\arg(\zeta) \right| \leq 3\pi/4-\epsilon \), \( \epsilon>0 \), see \cite[Equation~(12.9.1)]{DLMF}. 

Let the matrix function $\boldsymbol \Psi_{s_1, s_2}(\zeta)$ be  given by
\[
\begin{array}{ll}
\left(\begin{matrix}
D_{\nu}(2\zeta) & d_1 D_{-\nu -1} (-2 \ic \zeta) \medskip \\
d_2 D_{\nu - 1}(2\zeta) &  D_{-\nu}(-2\ic \zeta)
\end{matrix}\right)
\left( \begin{matrix} 1 & 0 \medskip \\ 0 & e^{-\pi \ic \nu /2}\end{matrix}\right), & \arg(\zeta) \in \left(0, \frac{\pi}{2} \right), \bigskip \\
\left(\begin{matrix}
D_{\nu}(-2\zeta) & d_1 D_{-\nu -1} (-2 \ic \zeta) \medskip \\
-d_2 D_{\nu - 1}(-2\zeta) & D_{-\nu}(-2\ic \zeta)
\end{matrix}\right)
\left( \begin{matrix} e^{\pi \ic \nu} & 0 \medskip \\ 0 & e^{-\pi \ic \nu/2} \end{matrix}\right), & \arg(\zeta)\in \left(\frac{\pi}{2}, \pi\right), \bigskip \\
\left(\begin{matrix}
D_{\nu}(-2\zeta) & -d_1D_{-\nu -1} (2 \ic \zeta) \medskip \\
-d_2 D_{\nu - 1}(-2\zeta) & D_{-\nu}(2\ic \zeta)
\end{matrix}\right)
\left( \begin{matrix} e^{-\pi \ic \nu} & 0 \medskip \\ 0 & e^{\pi \ic \nu/2} \end{matrix}\right), & \arg(\zeta)\in \left(-\frac{\pi}{2}, -\pi\right), \bigskip \\
\left(\begin{matrix}
D_{\nu} (2\zeta) & - d_1D_{-\nu -1} (2\ic \zeta) \medskip \\
d_2 D_{\nu - 1} (2\zeta) &  D_{- \nu}(2 \ic \zeta)
\end{matrix}\right)
\left( \begin{matrix} 1 & 0 \medskip \\ 0 & e^{\pi \ic \nu /2}\end{matrix}\right),  & \arg(\zeta) \in \left(0, -\frac{\pi}{2} \right).
\end{array}
\]
Then, $\boldsymbol \Psi_{s_1, s_2}(\zeta)$ satisfies the following RH problem (\rhpsiS):
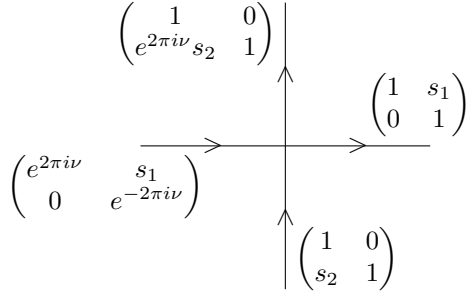
\begin{figure}[h!]
	\begin{tikzpicture}
	[rotate = -135, scale = 0.9]{
		
		\draw (0, 0) -- (1.5, 1.5);
		\node[rotate = -90] at (0.75, 0.75){$<$};
		\node[below right] at (0.75, 0.75){$\left(\begin{matrix}
			1 & 0\\
			s_2 & 1
			\end{matrix}\right)$};
		
		\draw (0, 0) -- (-1.5, -1.5);
		\node[rotate = -90] at (-0.75, -0.75){$<$};
		\node[above left] at (-0.75, -0.75){$\left(\begin{matrix}
			1 & 0\\
			e^{2\pi i \nu}s_2 & 1
			\end{matrix}\right)$};
		
		\draw (0, 0) -- (-1.5, 1.5);
		\node[rotate = 0] at (-0.75, 0.75){$>$};
		\node[above right] at (-0.75, 0.75){$\left(\begin{matrix}
			1 & s_1\\
			0 & 1
			\end{matrix}\right)$};
		
		\draw (0, 0) -- (1.5, -1.5);
		\node[rotate = 0] at (0.75, -0.75){$>$};
		\node[below left] at (0.75, -0.75){$\left(\begin{matrix}
			e^{2\pi i \nu} & s_1\\
			0 & e^{-2\pi i \nu}
			\end{matrix}\right)$};
	}
	\end{tikzpicture}
	\caption{Matrices $\boldsymbol \Psi_{s_1, s_2-}^{-1}\boldsymbol \Psi_{s_1, s_2+}$ on the corresponding rays.}
	\label{f:psiSjumps}
\end{figure}
\begin{itemize}
	\label{rhpsiS}
	\item[(a)] \( \boldsymbol \Psi_{s_1, s_2}(\zeta) \) is analytic in \( \C \setminus \left(\R \cup \ic\R \right) \);
	\item[(b)] \( \boldsymbol \Psi_{s_1, s_2}(\zeta) \) has continuous traces on \( \R \cup \ic\R \) outside of the origin that satisfy the jump relations shown in Figure~\ref{f:psiSjumps};
	\item[(c)] $\boldsymbol \Psi_{s_1, s_2}(\zeta)$ has the following asymptotic expansion as \( \zeta\to\infty \):
	\[
	\left(\boldsymbol I + \frac{1}{2\zeta}\left(\begin{matrix}
	0 & \ic d_1 \\
	d_2 & 0
	\end{matrix}\right) + \frac{ \nu (\nu - 1)}{8\zeta^2}\left(\begin{matrix}
	-1 & 0 \\
	0 & 1
	\end{matrix}\right) + \boldsymbol{\mathcal O}\left(\frac1{\zeta^3}\right)\right) (2\zeta)^{\nu \sigma_3}e^{- \zeta^2 \sigma_3},
	\]
	 which holds uniformly in \( \C \).
\end{itemize}
Indeed, \hyperref[rhpsiS]{\rhpsiS}(a) follows from the fact that $D_{\nu}(\zeta)$ is entire, while \hyperref[rhpsiS]{\rhpsiS}(c) is a consequence of \eqref{pcf-exp}. The jump relations \hyperref[rhpsiS]{\rhpsiS}(b) can be verified using the identities \( \Gamma(-\nu) \Gamma(1 + \nu) = -\pi/\sin (\pi \nu) \), \eqref{dep-parameters}, and
\[
D_{\mu}(2\xi) = e^{-\mu \pi \ic} D_{\mu}(- 2 \xi) + \dfrac{\sqrt{2\pi}}{\Gamma(-\mu)}e^{-(\mu + 1)\pi \ic/2} D_{-\mu -1}(2\ic \xi),
\]
suitably applied with parameter values $\mu = -\nu, \nu - 1 $ and $\xi = \zeta, -\zeta, \ic \zeta$. For later, it will be important for us to make the following observation. Define
\begin{equation}
\label{boldA}
d_\nu := \left\{ \begin{array}{rl} d_2, & \re(\nu)>0, \medskip \\ 0, & \re(\nu)=0, \medskip \\ \ic d_1, & \re(\nu)<0 \end{array}\right. \qandq \boldsymbol A_\nu := \left\{ \begin{array}{rl} \left(\begin{matrix} 0 & 0 \\ 1 & 0 \end{matrix}\right), & \re(\nu)\geq0, \smallskip \\ \left(\begin{matrix} 0 & 1 \\ 0 & 0 \end{matrix}\right), & \re(\nu)<0, \end{array} \right. 
\end{equation}
Recall that we set \( \varsigma_\nu=1,0,-1 \) depending on whether \( \re(\nu)>0 \), \( \re(\nu)=0 \), or \( \re(\nu)<0 \). Observe that 
\begin{multline}
\label{Psic-mod}
\big(\boldsymbol I - (2\zeta)^{-1}d_\nu\boldsymbol A_\nu\big) \boldsymbol\Psi_{s_1, s_2}(\zeta) \\ = (2\zeta)^{\nu\sigma_3} \left(\boldsymbol I + (2\zeta)^{-1-2\varsigma_\nu\nu}d_{-\nu}\boldsymbol A_{-\nu}  + \boldsymbol{\mathcal O}\left(\zeta^{-1-|\varsigma_\nu|}\right)\right) e^{- \zeta^2 \sigma_3}.
\end{multline}

\subsubsection{Conformal Map}

Let, as before, \( \mathcal Q_j \) stand for the \( j \)-th quadrant, $j \in \{1, 2, 3, 4 \}$. Set
\begin{equation}
\label{zeta-0}
\zeta_{0}(z) := \left((-1)^{j-1}\int_{0}^z\frac{s\dd s}{w(s)}\right)^{1/2}, \quad z\in U_\delta\cap \mathcal Q_j.
\end{equation}
Since \( w(z) \) is bounded at \( 0 \) and satisfies $w_+(s)=-w_-(s)$, $s\in\Delta$, the branch of the square root can be chosen so that the function \( \zeta_0(z) \) is in fact holomorphic in $U_\delta$ with a simple zero at the origin. Without loss of generality we can assume that \( \delta \) is small enough for $\zeta_0(z)$ to be conformal on $\overline U_\delta$. 

Since the integrand $(-1)^{j-1}s \dd s/w(s)$ becomes negative purely imaginary on $\Delta_1\cup\Delta_3$, the square root in \eqref{zeta-0} can be chosen so that \( \arg\big(\zeta_0(z)\big) = -\pi/4 \), \( z\in \Delta_3^\circ \). As we have had some freedom in selecting the arcs $\tilde{\Delta}_i$, we shall choose them so that \( \tilde\Delta_3^\circ \) and \( \tilde\Delta_1^\circ \) are mapped by \( \zeta_0(z) \) into positive and negative reals, respectively, while \( \tilde\Delta_4^\circ \) and \( \tilde\Delta_2^\circ \) are mapped into positive and negative purely imaginary numbers.

\subsubsection{Matrix $\boldsymbol P_0$}

Define the function \( r(z):=r_j(z) \), \( z\in\mathcal Q_j \), where we let
\begin{equation}
\label{rjs}
r_1:= \ic e^{\pi\ic\nu}\sqrt{\rho_1}, \ r_2:= \ic e^{-\pi\ic\nu}\frac{\rho_2}{\sqrt{\rho_1}}, \ r_3:= -\ic e^{-\pi\ic\nu}\frac{\rho_4}{\sqrt{\rho_1}}, \   r_4:= -\ic e^{-\pi\ic\nu}\sqrt{\rho_1}
\end{equation}
for a fixed determination of \( \sqrt{\rho_1(z)} \). Furthermore, let 
\begin{equation}
\label{J}
\boldsymbol J(z) := \left\{ \begin{array}{rl}
e^{2\pi \ic \nu \sigma_3}, & \arg z \in \left(-\frac{\pi}{2},0\right), \medskip \\
\left(\begin{matrix}0 & 1 \\ -1 & 0 \end{matrix}\right) e^{2\pi \ic \nu \sigma_3}
, & \arg z \in \left(0, \frac{\pi}{4}\right),\medskip \\ 
\left(\begin{matrix}0 & 1 \\ -1 & 0 \end{matrix}\right)
, & \arg z \in \left(\frac{\pi}{4}, \frac{\pi}{2}\right) \cup \left(- \frac{\pi}{2}, -\pi\right), \medskip \\
\boldsymbol I, & \arg z \in\left(\frac\pi2,\pi\right).
\end{array} \right.
\end{equation}
Finally, recalling \eqref{bees}, put
\begin{equation}
\label{s1s2}
s_1 := b_1(0)=b_3(0) \qandq s_2 := b_2(0)=b_4(0).
\end{equation}
Notice that since \( (\rho_1+\rho_2+\rho_3+\rho_4)(0)=0 \), the parameters \( s_1,s_2 \) satisfy \eqref{dep-parameters} with \( \nu \) given by \eqref{nu}. Then
\begin{equation}
\label{P0}
\boldsymbol P_0(z) := \boldsymbol E_0(z) \boldsymbol \Psi_{s_1, s_2} \big( n^{1/2} \zeta_0(z) \big) \boldsymbol J^{-1}(z) r^{-\sigma_3}(z)
\end{equation}
satisfies \hyperref[rhpo]{\rhpo}(a,b) for any matrix $\boldsymbol E_0(z)$ holomorphic in $U_\delta$. Indeed, \hyperref[rhpo]{\rhpo}(a) is an immediate consequence of \hyperref[rhpsiS]{\rhpsiS}(a). It further follows from \hyperref[rhpsiS]{\rhpsiS}(b) that the jumps of \( \boldsymbol P_0(z) \) are as on Figure~\ref{fig:jP0}.
\begin{figure}[h!]

\begin{tikzpicture}
	
	\draw (0, 0) -- (-2, 0);
	\node at (-1, 0){$>$};
	\node[left] at (-2, 0){$\left(\begin{matrix}
		0 & r_2r_3 \\
		-1/r_2r_3 & 0
		\end{matrix}\right)$};
	
	\draw (0, 0) -- (2, 0);
	\node at (1, 0){$<$};
	\node[right] at (2, 0){$\left(\begin{matrix}
		0 & r_1r_4\\
		-1/r_1r_4 & 0
		\end{matrix}\right)$};
	
	\draw (0, 0) -- (2, 2);
	\node[rotate = 45] at (1, 1){$<$};
	\node[above right] at (2, 2){$\left(\begin{matrix}
		1 & 0\\
		-s_1e^{2\pi\ic\nu}/r_1^2 & 1
		\end{matrix}\right)$};
	
	\draw (0, 0) -- (-2, -2);
	\node[rotate = 45] at (-1, -1){$>$};
	\node[below left] at (-2, -2){$\left(\begin{matrix}
		1 & 0\\
		s_1/r_3^2 & 1
		\end{matrix}\right)$};
	
	\draw (0, 0) -- (-2, 2);
	\node[rotate = -45] at (-1, 1){$>$};
	\node[above left] at (-2, 2){$\left(\begin{matrix}
		1 & 0\\
		s_2/r_2^2 & 1
		\end{matrix}\right)$};
	
	\draw (0, 0) -- (2, -2);
	\node[rotate = -45] at (1, -1){$<$};
	\node[below right] at (2, -2){$\left(\begin{matrix}
		1 & 0\\
		-s_2e^{-2\pi\ic\nu}/r_4^2 & 1
		\end{matrix}\right)$};
	
	\draw (0, 0) -- (0, -2);
	\node[rotate = 90] at (0, -1){$>$};
	\node[below] at (0, -2){$e^{2\pi\ic\nu\sigma_3}\left(\begin{matrix}
		0 & -r_3r_4\\
		1/r_3r_4 & 0
		\end{matrix}\right)$};
	
	\draw (0, 0) -- ( 0, 2);
	\node[rotate = 90] at (0, 1){$<$};
	\node[above] at (0, 2){$\left(\begin{matrix}
		0 & -r_1r_2\\
		1/r_1r_2 & 0
		\end{matrix}\right)$};
	\end{tikzpicture}
	
	\caption{The jump matrices of \( \boldsymbol P_0(z) \).}
	\label{fig:jP0}
\end{figure}
To verify \hyperref[rhpo]{\rhpo}(b), it remains to observe that
\[
r_1r_4 = \rho_1, \; -r_1r_2 = \rho_2, \; r_2r_3 = e^{-2\pi\ic\nu}\rho_2\rho_4/\rho_1 = \rho_3, \; -r_3r_4e^{2\pi\ic\nu} = \rho_4,
\]
since \( e^{-2\pi\ic\nu}=(\rho_1\rho_3)/(\rho_2\rho_4) \), and that
\begin{eqnarray*}
-e^{2\pi\ic\nu}\frac{s_1}{r_1^2} &=&  \frac{b_1(0)}{\rho_1} = \frac{b_1(0)}{b_1}\left( \frac1{\rho_1} + \frac1{\rho_2}\right), \\
\frac{s_2}{r_2^2} &=&  - e^{2\pi\ic\nu}b_2(0)\frac{\rho_1}{\rho_2^2} = -b_2(0)\frac{\rho_4}{\rho_2\rho_3} = \frac{b_2(0)}{b_2}\left( \frac1{\rho_2} + \frac1{\rho_3}\right), \\
\frac{s_1}{r_3^2} &=& - e^{2\pi\ic\nu}b_3(0)\frac{\rho_1}{\rho_4^2} = -b_3(0)\frac{\rho_2}{\rho_3\rho_4} = \frac{b_3(0)}{b_3}\left( \frac1{\rho_3} + \frac1{\rho_4} \right), \\
-e^{-2\pi\ic\nu}\frac{s_2}{r_4^2} &=&  \frac{b_4(0)}{\rho_1} = \frac{b_4(0)}{b_4}\left( \frac1{\rho_1} + \frac1{\rho_4} \right).
\end{eqnarray*}

Thus, it remains to choose $\boldsymbol E_0(z)$ so that \hyperref[rhpo]{\rhpo}(c) is fulfilled.

\subsubsection{Matrix $\boldsymbol E_0$}

Let $\boldsymbol\gamma$ be the part of \( \bd_3 \) that proceeds from \( \boldsymbol a_3 \) to \( {\boldsymbol 0^*} \), see Figures~\ref{f:cross} and \ref{f:basis}. Define
\begin{equation}
\label{K0}
K_0(z) := \left\{ \begin{array}{rl}
\exp \big \{ -\int_{\boldsymbol \gamma} G \big \} = \Phi\left({\boldsymbol 0} \right), & z\in\mathcal Q_1\cup\mathcal Q_3, \medskip \\
\exp \big \{ \int_{\boldsymbol \gamma} G \big \} = \Phi \left( {\boldsymbol 0^*} \right), & z\in\mathcal Q_2\cup\mathcal Q_4.
\end{array} \right.
\end{equation}
 \eqref{K0-formula} immediately yields that $|K_0(z)| \equiv 1$. Define
\begin{equation}
\label{E0star}
\boldsymbol E_0^\star(z) := \boldsymbol M^\star(z) r^{\sigma_3} (z) K_0^{n\sigma_3}(z) \boldsymbol J(z) \zeta_0^{-\nu \sigma_3}(z),
\end{equation}
see \eqref{M}. From \hyperref[rhn]{\rhn}(b), the definition of $\boldsymbol J(z)$, and the fact that $\zeta_0(z)$ maps $\tilde\Delta_1^{\circ}$ into the negative reals, it follows that $\boldsymbol E_0^\star(z)$ is holomorphic in $U_{\delta} \setminus \{ 0 \}$. Furthermore, \hyperref[rhn]{\rhn}(c) combined with the fact that $\zeta_0(z)$ possesses a simple zero at $z=0$ imply that $\boldsymbol E_0^\star(z)$ is holomorphic in $U_{\delta}$. Observe also that the moduli of the entries of \( \boldsymbol E_0^\star(z) \) depend only on the parity of~\( n \).

Put for brevity \( \epsilon_{\nu,n} := (4n)^{\varsigma_\nu\nu-1/2} \), where, as before, \( \varsigma_\nu \) is equal to \( 1,0,-1 \) depending on whether \( \re(\nu) \) is positive, zero, or negative. Set
\begin{equation}
\label{defL}
\boldsymbol L_\nu :=  \frac{d_\nu\epsilon_{\nu,n}}{\zeta^\prime_0(0) D_n}\boldsymbol E_0^{\star}(0) \boldsymbol A_\nu \boldsymbol E_0^{\star-1}(0),
\end{equation}
where \( d_\nu,\bold A_\nu \) were defined in \eqref{boldA} and we assume that
\begin{equation}
\label{Dnnot0}
0 \neq D_n := 1 -d_\nu\epsilon_{\nu,n}\big(\zeta^\prime_0(0)\big)^{-1} E_\nu
\end{equation}
with
\[
E_\nu := \left\{ \begin{array}{rcl} \big[ \boldsymbol E_0^{\star-1}(0)\boldsymbol E_0^{\star\prime}(0)\big]_{12} & \text{if} & \re(\nu)\geq0, \medskip \\ \big[ \boldsymbol E_0^{\star-1}(0)\boldsymbol E_0^{\star\prime}(0)\big]_{21} & \text{if} & \re(\nu)<0. \end{array} \right.
\]
Notice that \( \boldsymbol L_\nu \) is the zero matrix when \( \re(\nu)=0 \) as \( d_\nu=0 \) by \eqref{boldA}. Let
\begin{equation}
\label{E0}
\boldsymbol E_0(z) := (\boldsymbol I + \boldsymbol L_\nu/z)\boldsymbol E_0^\star(z)(4n)^{-\nu\sigma_3/2} \big(\boldsymbol I-d_\nu\big(2n^{1/2}\zeta_0(z)\big)^{-1}\boldsymbol A_\nu\big).
\end{equation}
Let us show that thus defined matrix \( \boldsymbol E_0(z) \) is holomorphic at the origin. Indeed, it has at most double pole there. It is quite simple to see that the coefficient next to \( z^{-2} \) is equal to
\[
-d_\nu\epsilon_{\nu,n}(4n)^{-\varsigma_\nu\nu/2}\big(\zeta^\prime_0(0)\big)^{-1}\boldsymbol L_\nu \boldsymbol E_0^\star(0)\boldsymbol A_\nu,
\]
which is equal to the zero matrix since \( \boldsymbol A_\nu^2 \) is equal to the zero matrix. Using this observation we also get that the coefficient next to \( z^{-1} \) is equal to
\[
\boldsymbol L_\nu \boldsymbol E_0^\star(0)(4n)^{-\nu\sigma_3/2} - d_\nu\epsilon_{\nu,n}(4n)^{-\varsigma_\nu\nu/2}\big(\zeta^\prime_0(0)\big)^{-1}\big(\boldsymbol E_0^\star(0)+\boldsymbol L_\nu \boldsymbol E_0^{\star\prime}(0)\big)\boldsymbol A_\nu,
\]
which simplifies to
\[
\frac{d_\nu\epsilon_{\nu,n}(4n)^{-\varsigma_\nu\nu/2}}{\zeta^\prime_0(0)D_n} \left(1 -\frac{d_\nu\epsilon_{\nu,n}}{\zeta^\prime_0(0)}E_\nu - D_n \right) \boldsymbol E_0^\star(0)\boldsymbol A_\nu,
\]
that is equal to the zero matrix by the very definition of \( D_n \).

Now, recalling the definition of $\Phi(\z)$ in \eqref{Phi} and of \( \zeta_0(z) \) in \eqref{zeta-0}, one can see that
\begin{equation}
\label{zeta0-phi}
\exp \left \{-\zeta_0^2(z) \right \} = e^{-\int_{\boldsymbol \gamma} G} \left\{ \begin{array}{ll}
\Phi\big(z^{(1)}\big), & z\in\mathcal Q_1\cup\mathcal Q_3, \medskip \\
\Phi\big(z^{(0)}\big), &  z\in\mathcal Q_2\cup\mathcal Q_4.
\end{array} \right.
\end{equation}
In particular, since $\boldsymbol D(z)=\Phi^{n\sigma_3}\big(z^{(0)}\big)$ and \( \Phi\big(z^{(0)}\big)\Phi\big(z^{(1)}\big)\equiv 1 \), it follows from \eqref{K0} that
\[
\exp \left \{-n\zeta_0^2(z)\sigma_3 \right \} \boldsymbol J^{-1}(z) = \boldsymbol J^{-1}(z) K_0^{-n\sigma_3}(z) \boldsymbol D(z). 
\]
For brevity, let \( \boldsymbol H(z) := r^{\sigma_3} (z) K_0^{n\sigma_3}(z) \boldsymbol J(z) \). Then we get from \eqref{Psic-mod} and the previous identity that
\begin{multline*}
\boldsymbol E_0(s)\boldsymbol\Psi_{s_1,s_2}\big(n^{1/2}\zeta_0(s) \big)\boldsymbol J^{-1}(s)r^{-\sigma_3}(s) = \\
\boldsymbol M(s) \boldsymbol H(s)\left(\boldsymbol I + \boldsymbol{\mathcal O} \left( \big( n\zeta_0^2(s) \big)^{-1/2-|\re(\nu)|} \right)\right)\boldsymbol H^{-1}(s) \boldsymbol D(s) = \\  \boldsymbol M(s) \left(\boldsymbol I + \boldsymbol{\mathcal O}\left( \big( n\delta^2\big)^{-1/2-|\re(\nu)|} \right)\right)\boldsymbol D(s).
\end{multline*}

It remains to show that \eqref{Dnnot0} holds for all \( n\in\N_{\rho,\varepsilon} \). It follows from \eqref{Arhon} that it is enough to show that
\begin{equation}
\label{target}
A_{\rho,n} = d_\nu\epsilon_{\nu,n}\big(\zeta^\prime_0(0)\big)^{-1} E_\nu.
\end{equation}

\subsection{Existence of \( \boldsymbol L_\nu \)}
\label{sec:5.2}

Assume that \( \re(\nu)> 0 \). It can be readily verified that
\[
E_\nu = \gamma_n\gamma_{n-1}^*  \big(  [\boldsymbol E_0^{\star\prime}(0)]_{12}[\boldsymbol E_0^{\star}(0)]_{22} -[\boldsymbol E_0^{\star\prime}(0)]_{22}[\boldsymbol E_0^{\star}(0)]_{12}\big),
\]
where we used the fact that \( \det(\boldsymbol E_0^\star(z)) = \det(\boldsymbol M^\star(z)) = (\gamma_n\gamma_{n-1}^*)^{-1} \). Notice that \( d_2\neq 0 \) by \eqref{productd1d2}. Using \eqref{E0star}, \eqref{J}, and \eqref{K0} gives us that \( [\boldsymbol E_0^\star(z)]_{i2}  \) is equal to
\[
\zeta_0^\nu(z)\Phi^n\big({\boldsymbol 0}\big) \left\{
\begin{array}{rl}
e^{-2\pi\ic\nu}r_1(z)[\boldsymbol M^\star(z)]_{i1}, & \arg(z)\in(0,\pi/4), \medskip \\
r_1(z)[\boldsymbol M^\star(z)]_{i1}, & \arg(z)\in(\pi/4,\pi/2), \medskip \\
~[\boldsymbol M^\star(z)]_{i2}/r_2(z), & \arg(z)\in(\pi/2,\pi), \medskip \\
r_3(z)[\boldsymbol M^\star(z)]_{i1}, & \arg(z)\in(\pi,3\pi/2), \medskip \\
e^{-2\pi\ic\nu}[\boldsymbol M^\star(z)]_{i2}/r_4(z), & \arg(z)\in(3\pi/2,2\pi).
\end{array}
\right.
\]
Define
\[
S(z) := \zeta_0^\nu(z) \left\{
\begin{array}{rl}
e^{-2\pi\ic\nu}r_1(z)S_\rho\big(z^{(0)}\big), & \arg(z)\in(0,\pi/4), \medskip \\
r_1(z)S_\rho\big(z^{(0)}\big), & \arg(z)\in(\pi/4,\pi/2), \medskip \\
S_\rho\big(z^{(1)}\big)/(r_2w)(z), & \arg(z)\in(\pi/2,\pi), \medskip \\
r_3(z)S_\rho\big(z^{(0)}\big), & \arg(z)\in(\pi,3\pi/2), \medskip \\
e^{-2\pi\ic\nu}S_\rho\big(z^{(1)}\big)/(r_4w)(z), & \arg(z)\in(3\pi/2,2\pi),
\end{array}
\right.
\]
which is a holomorphic and non-vanishing function around the origin. Then we obtain from \eqref{M}, \eqref{product-Psis}, and \eqref{T-ratio} that
\begin{equation}
\label{target1}
E_\nu = S^2(0)\Phi^{2n}\big({\boldsymbol 0}\big)Y_nX_n^{-1}.
\end{equation}

When \( |\pi(\z_k)|=\infty \), the first condition in the definition of \( \N_{\rho,\varepsilon} \) implies that we are looking only at those indices \( n \) for which \( \z_{\imath(n)}=\infty^{(1)} \). In this case \( A_{\rho,n}=0 \) by its very definition in \eqref{Arhon} and it also follows from Lemma~\ref{lem:4.9} that \( Y_n=0 \) in this case. Hence, \eqref{target} does hold in this case.

Let now \( |\pi(\z_k)|<\infty \) and therefore the first condition in the definition of \( \N_{\rho,\varepsilon} \) is void.  It follows from \eqref{zeta-0} and \eqref{w} as well as the fact that \( \zeta_0(z) \) maps \( \{\arg(z)=5\pi/4 \} \) into the positive reals that
\begin{equation}
\label{target2}
1/\zeta_0^\prime(0) = e^{5\pi\ic/4}\sqrt{2ab}.
\end{equation}
Since \( e^{-2\pi\ic\nu}=(\rho_1\rho_3)(0)/(\rho_2\rho_4)(0) \) by \eqref{nu}, we get from \eqref{rjs} that
\begin{equation}
\label{target3}
S^2(0) = -\big(\rho_3\rho_4/\rho_2\big)(0)(2ab)^{-\nu}\lim_{z\to0,~\arg(z)=5\pi/4}|z|^{2\nu}S^2_\rho\big(z^{(0)}\big).
\end{equation}
Observe also that 
\begin{equation}
\label{target4}
d_2 = e^{\pi\ic\nu}\frac{(\rho_2+\rho_3)(0)}{\rho_4(0)}\frac{\Gamma(1-\nu)}{\sqrt{2\pi}}
\end{equation}
by \eqref{d1d2}, \eqref{s1s2}, and \eqref{bees}. Then it follows from \eqref{XnYn} and the very definitions of \( A_{\rho,n} \) in \eqref{Arhon} that \eqref{target1}--\eqref{target4} yield \eqref{target}. The proof of \eqref{target} in the case \( \re(\nu)<0 \) is similar.

Since \( |\pi(\z_k)|<\infty \), the quantities \( Y_n \) and \( Z_n \) in \eqref{T-ratio} and \eqref{T-ratio1} are non-zero and equal to
\[
W_{\imath(n)}^\prime(\boldsymbol o)\frac{T_{\imath(n-1)}^2(\boldsymbol o)}{\Phi(\boldsymbol o)}, \quad W_{\imath(n)}(\z):=\frac{T_{\imath(n)}(\z)}{T_{\imath(n-1)}(\z)},
\]
where \( \boldsymbol o \) was defined in \eqref{sigma-o}. Hence, it follows from \eqref{defL}, \eqref{target}, \eqref{target1}, and a computation similar to the one carried out at the beginning of this subsection  that
\[
\boldsymbol L_\nu = \frac{A_{\rho,n}}{1-A_{\rho,n}}\frac1{W_{\imath(n)}^\prime(\boldsymbol o)}  \left( \begin{matrix} W_{\imath(n)}(\boldsymbol o) & -\Phi(\boldsymbol o)W_{\imath(n)}^2(\boldsymbol o) \medskip \\ 1/\Phi(\boldsymbol o) & -W_{\imath(n)}(\boldsymbol o) \end{matrix}\right).
\]
Moreover, since \( W_1(\z) = 1/W_0(\z) \) we can rewrite the first row of \( \boldsymbol L_\nu \) as
\begin{equation}
\label{L}
\big(\begin{matrix} 1 & 0 \end{matrix}\big)\boldsymbol L_\nu = (-1)^{\imath(n)}\frac{A_{\rho,n}}{1-A_{\rho,n}}\frac{W_0(\boldsymbol o)}{W_0^\prime(\boldsymbol o)} \left( \begin{matrix} 1 & -\Phi(\boldsymbol o)W_{\imath(n)}(\boldsymbol o) \end{matrix} \right).
\end{equation}

\subsection{Final Riemann-Hilbert Problem}

In what follows, we assume that \( \delta=\delta_n\leq\delta_0\) in Section~\ref{sec:4.5} when \( \ell<\infty \) and shall specify the exact dependence on \( n \) later on in this section. When \( \ell=\infty \), we simply take \( \delta=\delta_0 \). Set \( U:=\cup_{i=1}^4 U_{a_i} \) and define
\[ 
\Sigma_n := \big(\partial U\cup \partial U_{\delta_n}\big) \cup \left(\cup_{i=1}^4\big(\Gamma_{i-}\cup\Gamma_{i+}\cup\tilde\Delta_i\big)\setminus \overline U\right),
\] 
see Figure~\ref{fig:Z}.
\begin{figure}[h!]
	\begin{tikzpicture}[rotate = -45, scale =0.8]
	\draw (0, 0) circle (0.5);
	\node[rotate = -90] at (0.3535, 0.3535){$<$};
	\draw[style = dashed] (0, 0) circle (0.85);
	
	
	\node at (-2, 2){\textbullet};
	\node[above] at (-2, 2){$a_2$};
	\draw (-2, 2) circle (0.5);
	\node[rotate = -45] at (-2, 2.5){$<$};
	
	\node at (-2, -2){\textbullet};
	\node[below] at (-2, -2){$a_3$};
	\draw (-2, -2) circle (0.5);
	\node[rotate = -45] at (-2, -2.5){$>$};
	
	\node at (2, 2){\textbullet};
	\node[below] at (2, 2){$a_1$};
	\draw (2, 2) circle (0.5);
	\node[rotate = -45] at (2, 2.5){$<$};
	
	\node at (2, -2){\textbullet};
	\node[below] at (2, -2){$a_4$};
	\draw (2, -2) circle (0.5);
	\node[rotate = -45] at (2, -2.5){$>$};
	
	\draw (0.7, 0) to[out = 0, in = 270](2, 1.5);
	\draw (0.7, 0) to[out = -0, in = -270](2, -1.5);
	
	\draw (-0.7, 0) to[out = 180, in = 270](-2, 1.5);
	\draw (-0.7, 0) to[out = 180, in = -270](-2, -1.5);
	\draw (0.7, 0)--(-0.7, 0);
	
	\draw (0, -0.7) to[out = 270, in = 180](1.5, -2);
	\draw (0, -0.7) to[out = 270, in = 0](-1.5, -2);
	\draw (0, -0.7)--(0, 0.7);
	
	\draw (0, 0.7) to[out = 90, in =180](1.5, 2);
	\draw (0, 0.7) to[out = 90, in = 0](-1.5, 2);
	
	\node at (2.5, -1){$\Gamma^-_{4}$};
	\node at (1, -2.5){$\Gamma^+_{4}$};
	
	\node[rotate = -90] at (0.5, -1.67){$<$};
	\node[rotate = -90] at (1.67, -0.5){$<$};
	
	\node[rotate = -90] at (-0.5, 1.67){$>$};
	\node[rotate = -90] at (-1.67, 0.5){$>$};
	
	\node[rotate = 0] at (0.5, 1.67){$<$};
	\node[rotate = 0] at (1.67, 0.5){$<$};
	
	\node[rotate = 0] at (-0.5, -1.67){$>$};
	\node[rotate = 0] at (-1.67, -0.5){$>$};
	
	\end{tikzpicture}
	\caption{Contour $\Sigma_n$ for \hyperref[rhz]{\rhz} (dashed circle represents \( \{|z|=\delta_0\} \)).}
	\label{fig:Z}
\end{figure}
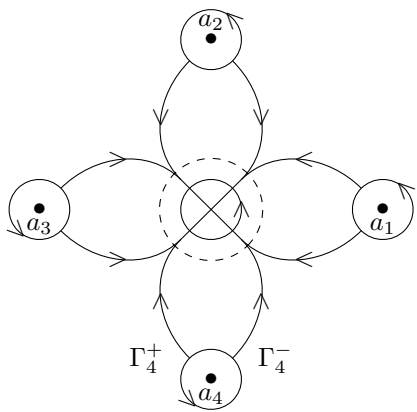
We are looking for a solution of the following Riemann-Hilbert problem (\rhz):
\begin{itemize}
\label{rhz}
\item[(a)] \( \boldsymbol Z(z) \) is analytic in \( \overline\C\setminus \Sigma_n \) and \( \lim_{z\to\infty} \boldsymbol Z(z)= \boldsymbol I\);
\item[(b)] \( \boldsymbol Z(z) \) has continuous traces outside of non-smooth points of \( \Sigma_n \) that satisfy 
\[
\boldsymbol Z_+ = \boldsymbol Z_-
\left\{ \begin{array}{ll}
\boldsymbol P_{a_i}(\boldsymbol{MD})^{-1}, & \text{on } \partial U_{a_i}, \medskip \\
\boldsymbol P_0 (\boldsymbol{MD})^{-1}, & \text{on } \partial U_\delta, \medskip \\
\boldsymbol{MD} \left( \begin{matrix} 1 & 0 \\ 1/\rho_i & 1 \end{matrix} \right)(\boldsymbol{MD})^{-1}, & \text{on } \big(\Gamma_{i + }^\circ \cup \Gamma_{i - }^\circ \big)\setminus \overline U,
\end{array}
\right.
\]
and
\[
\boldsymbol Z_+ = \boldsymbol Z_-
\left\{ \begin{array}{ll}
\boldsymbol{MD} \left( \begin{matrix} 1 & 0 \\ \frac{\rho_i+\rho_{i+1}}{\rho_i\rho_{i+1}} & 1 \end{matrix} \right)(\boldsymbol{MD})^{-1}, & \text{on } \tilde \Delta_i^\circ \setminus \overline U_{\delta_n}, \medskip \\
\boldsymbol P_{0-} \left( \begin{matrix} 1 & 0 \\ \frac{\rho_i+\rho_{i+1}}{\rho_i\rho_{i+1}} & 1 \end{matrix} \right)\boldsymbol P_{0+}^{-1}, & \text{on } \tilde \Delta_i^\circ \cap U_{\delta_n}
\end{array}
\right.
\]
(notice that the second set of jumps is not present when \( \ell=\infty \) as \( \delta_n=\delta_0 \) and \( \boldsymbol P_0(z) \) is the exact parametrix).
\end{itemize}

It follows from \hyperref[rhp]{\rhpi}(d) that the jump of \( \boldsymbol Z \) on \( \partial U_{a_i} \) can be written as
\[
\boldsymbol M(s) \big(\boldsymbol I+\boldsymbol{\mathcal O}(1/n) \big) \boldsymbol M^{-1}(s) = \boldsymbol I+\boldsymbol{\mathcal O}_\varepsilon(1/n)
\]
since the matrix \( \boldsymbol M(z) \) is invertible (its determinant is equal to the reciprocal of \( \gamma_n\gamma_{n-1}^*\)), the matrix \( \boldsymbol M^\star(z) \) depends only on the parity of \( n \), see \eqref{M}, and the matrix \( \boldsymbol L_\nu \) has trace and determinant zero as well as bounded entries for all \( n\in\N_{\rho,\varepsilon} \) and each fixed \( \varepsilon>0 \), see \eqref{defL}. Similarly, we get from \hyperref[rhpo]{\rhpo}(c) that the jump of \( \boldsymbol Z \) on \( \partial U_{\delta_n} \) can be written as
\begin{multline*}
\boldsymbol M(s)\left(\boldsymbol I+\boldsymbol{\mathcal O}\left(\big(n\delta_n^2\big)^{-1/2-|\re(\nu)|}\right)\right)\boldsymbol M^{-1}(s) \\ =\boldsymbol I + \big( \boldsymbol I+\boldsymbol L_\nu/s\big)\boldsymbol{\mathcal O}\left(\big(n\delta_n^2\big)^{-1/2-|\re(\nu)|}\right)\big( \boldsymbol I-\boldsymbol L_\nu/s\big),
\end{multline*}
where \( \boldsymbol{\mathcal O}(\cdot) \) does not depend on \( n \). Since \( \boldsymbol L_\nu = \boldsymbol{\mathcal O}_\varepsilon\big(n^{|\re(\nu)|-1/2}\big) \) by its very definition in  \eqref{defL}, we get that the jump of \( \boldsymbol Z \) on \( \partial U_{\delta_n} \) can further be written as
\[
\boldsymbol I + \boldsymbol{\mathcal O}_\varepsilon\left(\big(n\delta_n^2\big)^{-1/2-|\re \nu|}\max\big\{1,n^{2|\re(\nu)|}/(n\delta_n^2)\big\}\right).
\]

One can easily check with the help of \eqref{N} and \eqref{M} that the jump of \( \boldsymbol Z \) on $\big(\Gamma_{i +}^\circ \cup \Gamma_{i -}^\circ\big)\setminus\overline U$ is equal to
\begin{multline*}
\boldsymbol I + \frac{\gamma_n\gamma_{n-1}^*}{(w^2\rho_i)(s)}\big( \boldsymbol I+\boldsymbol L_\nu/s\big)\left( \begin{matrix} (\Psi_n\Psi_{n-1})\big(s^{(1)}\big) & -\Psi_n^2\big(s^{(1)}\big) \medskip \\ \Psi_{n-1}^2\big(s^{(1)}\big) & -(\Psi_n\Psi_{n-1})\big(s^{(1)}\big) \end{matrix} \right) \big( \boldsymbol I-\boldsymbol L_\nu/s\big) \\ = \boldsymbol I + \boldsymbol{\mathcal O}_\varepsilon(e^{-cn})
\end{multline*}
 for some constant \( c>0 \) by \eqref{Psin} and since the maximum of \( |\Phi(s^{(1)})| \) on \( \Gamma_{i\pm}\setminus U \) is less than \(1\). The estimate of the jump of \( \boldsymbol Z \) on \( \tilde \Delta_i^\circ \setminus \overline U_{\delta_n} \) is analogous and yields 
 \[
\boldsymbol I + \boldsymbol{\mathcal O}_\varepsilon\left(e^{-cn\delta_n^2}\max\big\{1,n^{2|\re(\nu)|}/(n\delta_n^2)\big\}\right)
\]
for an adjusted constant \( c> 0 \), where the rate estimate follows from \eqref{zeta0-phi} as
\[
\big|\Phi\big(s^{(1)}\big)\big| = \exp\left\{(-1)^i\re\big(\zeta_0^2(s)\big)\right\} = \mathcal O\big(e^{-c\delta_n^2}\big), \quad s\in\tilde \Delta_i \setminus U_{\delta_n},
\]
since \( \zeta_0(z) \) is real on \( \tilde\Delta_1\cup\tilde\Delta_3 \) and is purely imaginary on  \( \tilde\Delta_2\cup\tilde\Delta_4 \).

Finally, it holds on \( \tilde \Delta_i^\circ \cap U_{\delta_n} \) that the jump of \( \boldsymbol Z \) is equal to
\begin{multline*}
\boldsymbol I + \left(1 - \frac{b_i(0)}{b_i(z)} \right) \frac{(\rho_i+\rho_{i+1})(s)}{(\rho_i\rho_{i+1})(s)} \boldsymbol P_{0+}(s)\left( \begin{matrix} 0 & 0 \\ 1 & 0 \end{matrix} \right)\boldsymbol P_{0+}^{-1}(s) = \\ \boldsymbol I + \mathcal O(\delta_n^\ell)\boldsymbol E_0(s) \left( \begin{matrix} [\boldsymbol\Psi_+(s)]_{1j}[\boldsymbol\Psi_+(s)]_{2j} & -[\boldsymbol\Psi_+(s)]_{1j}^2 \medskip \\ [\boldsymbol\Psi_+(s)]_{2j}^2 & -[\boldsymbol\Psi_+(s)]_{1j}[\boldsymbol\Psi_+(s)]_{2j} \end{matrix} \right) \boldsymbol E_0^{-1}(s)
\end{multline*}
by \eqref{approximations} and \eqref{P0}, where \( j=1 \) for \( s\in\tilde\Delta_1\cup\tilde\Delta_3 \) and \( j=2 \) for \( s\in\tilde\Delta_2\cup\tilde\Delta_4 \), and we set for brevity \( \boldsymbol\Psi(z):=\boldsymbol\Psi_{s_1,s_2}\big(n^{1/2}\zeta_0(z)\big) \) (observe also that \( \det(\boldsymbol\Psi(z))\equiv1 \)). It follows from the asymptotic expansion \eqref{pcf-exp} that \( D_\mu(x) \) is bounded for \( x\geq0 \). Thus, we deduce from the definition of \( \boldsymbol \Psi(z) \) that the above jump matrix can be estimated as
\[
\boldsymbol I + \mathcal O(\delta_n^\ell)\boldsymbol E_0(s)\boldsymbol{\mathcal O}(1)\boldsymbol E_0^{-1}(s) = \boldsymbol I + \boldsymbol{\mathcal O}_\varepsilon\left( n^{|\re(\nu)|} \delta_n^\ell \right),
\]
where the last equality follows from \eqref{E0star} and \eqref{E0} as \( \boldsymbol E_0(z) \) is equal to a bounded matrix that depends only on \( \epsilon_{\nu,n} \) multiplied by \( (4n)^{\nu\sigma_3/2} \) on the right.

When \( \ell \geq 4|\re(\nu)|(1+|\re(\nu)|)/(1-2|\re(\nu)|) \), choose
\begin{equation}
\label{deltan}
\delta_n = \delta_0\exp\left\{-\frac12\frac{1+4|\re(\nu)|}{\ell+1+2|\re(\nu)|}\ln n\right\}.
\end{equation}
Then it holds that \( n^{2|\re(\nu)|}/(n\delta_n^2) = \mathcal O(1) \) and
\[
n^{|\re(\nu)|}(\delta_n/\delta_0)^\ell = \big(n(\delta_n/\delta_0)^2\big)^{-|\re(\nu)|-1/2} = n^{-d_{\nu,\ell}}
\]
with \( d_{\nu,\ell} \) defined in \eqref{dnuell}. Otherwise, take
\[
\delta_n = \delta_0\exp\left\{-\frac12\frac{3}{\ell+3+2|\re(\nu)|}\ln n\right\}.
\]
In this case \( n^{2|\re(\nu)|}/(n\delta_n^2)\to\infty \) as \( n\to\infty \) and
\[
n^{|\re(\nu)|}(\delta_n/\delta_0)^\ell = n^{2|\re(\nu)|}\big(n(\delta_n/\delta_0)^2\big)^{-|\re(\nu)|-3/2} = n^{-d_{\nu,\ell}}.
\]

Since \( d_{\nu,\ell}<1 \), it holds that the jumps of \( \boldsymbol Z \) on \( \Sigma_n \) are of order \( \boldsymbol I + \boldsymbol{\mathcal O}_\varepsilon( n^{-d_{\nu,\ell}}) \), where \( \boldsymbol{\mathcal O}_\varepsilon(\cdot) \) does not depend on \( n \). Then, by arguing as in \cite[Theorem~7.103 and Corollary~7.108]{Deift} we obtain that the matrix \( \boldsymbol Z \) exists for all \( n\in\N_{\rho,\varepsilon} \) large enough and that
\[
\|\boldsymbol Z_\pm-\boldsymbol I\|_{2,\Sigma_n} = \mathcal O_\varepsilon\big( n^{-d_{\nu,\ell}}\big).
\]
Since the jumps of \( \boldsymbol Z \) on \( \Sigma_n \) are restrictions of holomorphic matrix functions, the standard deformation of the contour technique and the above estimate yield that
\begin{equation}
\label{z-rate}
\boldsymbol Z = \boldsymbol I + \boldsymbol{\mathcal{O}}_\varepsilon\big( n^{-d_{\nu,\ell}}\big)
\end{equation}
locally uniformly in \( \overline\C\setminus\{0\} \).

\subsection{Proofs of Theorems~\ref{thm:asymptotics} and \ref{thm:pade}}

Given \( \boldsymbol Z(z) \),  a solution of \hyperref[rhz]{\rhz}, \( \boldsymbol P_{a_i}(z) \) and \( \boldsymbol P_0(z) \), defined in \eqref{Pa} and \eqref{P0}, respectively, and \( \boldsymbol C(\boldsymbol{MD})(z) \)  from \eqref{M} and \eqref{CD}, it can be readily verified that 
\begin{equation}
\label{X}
\boldsymbol X(z) :=  \boldsymbol C\boldsymbol Z(z) \left\{  \begin{array} {ll}
\boldsymbol P_{a_i}(z), & z \in U_i, \ i \in\{1,2 ,3 ,4\}, \smallskip \\
\boldsymbol P_0(z), & z \in U_{\delta}, \smallskip \\
(\boldsymbol {MD})(z), & \text{otherwise},
\end{array} \right.
\end{equation}
solves  \hyperref[rhx]{\rhx}. Given a closed set \( K\subset\overline\C\setminus\Delta  \), the contour \( \Sigma_n \) can always be adjusted so that \( K \) lies in the exterior domain of \( \Sigma_n \). Then it follows from \eqref{eq:x} that \( \boldsymbol Y(z)=\boldsymbol X(z) \) on \( K \). Formulae \eqref{Qnasymp} and \eqref{Rnasymp} now follow immediately from \eqref{Y-matrix}, \eqref{eq:x}, \eqref{M}, \eqref{CD}, and \eqref{Psin} since
\[
w^{i-1}(z)[(\boldsymbol{ZMD})(z)]_{1i} = (1 + \upsilon_{n1}(z))\Psi_n\big(z^{(i-1)}\big) + \upsilon_{n2}(z)\Psi_{n-1}\big(z^{(i-1)}\big),
\]
where \( 1+\upsilon_{n1}(z),  \upsilon_{n2}(z) \) are the first row entries of \( \boldsymbol Z(z)(\boldsymbol I+\boldsymbol L_\nu/z) \). Estimates \eqref{upsilons} are direct consequence of \eqref{defL} and \eqref{z-rate}. Relations \eqref{Lni} follow from \eqref{L}. Similarly, if \( K \) is a compact subset of \( \Delta^\circ \), the lens \( \Sigma_n \) can be arranged so that \( K \) does not intersect \( \overline U\cup \overline U_{\delta_n} \). As before, we get that
\begin{multline*}
[(\boldsymbol{ZMD})(z)]_{11} =  \left((1 + \upsilon_{n1}(z))\Psi_n\big(z^{(0)}\big) + \upsilon_{n2}(z)\Psi_{n-1}\big(z^{(0)}\big)\right) \pm \\ (\rho_i w)^{-1}(z) \left((1 + \upsilon_{n1}(z))\Psi_n\big(z^{(1)}\big) + \upsilon_{n2}(z)\Psi_{n-1}\big(z^{(1)}\big)\right)
\end{multline*}
for \( z\in \Omega_{i\pm} \setminus \big( \overline U\cup \overline U_{\delta_n} \big) \). Formula \eqref{Qnasymp1} now follows by taking the trace of \( [(\boldsymbol{ZMD})(z)]_{11} \) on \( \Delta_{i\pm}\setminus \big( \overline U\cup \overline U_{\delta_n} \big) \) and using \eqref{Psin-jump}.

\subsection{Behavior of \( Q_n(z) \) around the Origin when \( \ell=\infty \) and \( |\re(\nu)|<1/2 \)}
\label{ssec:at0}

Assume that \( \ell=\infty \). In this case \( \delta=\delta_n=\delta_0 \) in \eqref{deltan} is independent of \( n \) and \( \boldsymbol P_0(z) \) is the exact  parametrix (that is, the second group of jumps in \hyperref[rhz]{\rhz}(b) is not present). Assume further that \( |\re(\nu)|<1/2 \).  The definition of the matrix function \( \boldsymbol M(z) \) as \( \left( \boldsymbol I + \boldsymbol L_\nu/z\right)\boldsymbol M^\star(z) \) is absolutely necessary when \( |\re(\nu)|=1/2 \), see \eqref{M}, but can be simplified to \( \boldsymbol M(z)=\boldsymbol M^\star(z) \) when \( |\re(\nu)|<1/2 \). That is, we can take \( \boldsymbol L_\nu \) to be the zero matrix. In this case the error rate in \hyperref[rhpo]{\rhpo}(c) will become \( \boldsymbol{\mathcal O}(n^{|\re(\nu)|-1/2}) \) and the matrix \( \boldsymbol E_0(z) \) will simplify to
\[
\boldsymbol E_0(z) = \boldsymbol M(z)K_0^{n\sigma_3}(z)r^{\sigma_3} (z) \boldsymbol J(z) (2\xi_n)^{-\nu \sigma_3}, \quad \xi_n:=\sqrt n\zeta_0(z),
\]
see \eqref{E0star} and \eqref{E0}. Assume now that \( z \) is in the second quadrant, in which case \( \boldsymbol J = \boldsymbol I \). It then follows from \eqref{K0} and \eqref{zeta0-phi} that \( K_0^n(z) = \Phi^n(z^{(0)})e^{\xi_n^2} \). Thus, we get from \eqref{P0} as well as \eqref{N} and \eqref{CD} that
\[
\boldsymbol P_0(z) =  \boldsymbol E_0(z)\boldsymbol\Psi(\xi_n)r_2^{-\sigma_3}(z), \quad  \boldsymbol E_0(z)=\boldsymbol C^{-1}\boldsymbol N(z) \left( r_2(z)e^{\xi_n^2}/(2\xi_n)^\nu \right)^{\sigma_3},
\]
where we write \( \boldsymbol \Psi(\zeta) \) for \( \boldsymbol \Psi_{s_1,s_2}(\zeta) \). Now, \eqref{Y-matrix} and \eqref{eq:x} yield that
\(
Q_n(z) = [\boldsymbol X(z)]_{11} + \rho_3^{-1}(z)[\boldsymbol X(z)]_{12}
\)
for \( z\in \Omega_{3+} \). Therefore, we get from \eqref{X} that
\[
\gamma_n^{-1}Q_n(s) = \left(\begin{matrix} 1 & 0 \end{matrix}\right)\boldsymbol Z(s)\left( [\boldsymbol P_0(s)]_{1+}+\rho_3^{-1}(s)[\boldsymbol P_0(s)]_{2+}\right)
\]
for \( s\in\Delta_3\cap U_\delta \), where \( [\boldsymbol P_0(z)]_i \) stands for the \( i \)-th column of \( \boldsymbol P_0(z) \). It follows from the analyticity of \( \boldsymbol E_0(z) \) in \( U_\delta \) that
\[
\gamma_n^{-1}Q_n(s) = \left(\begin{matrix} 1 & 0 \end{matrix}\right)\boldsymbol Z(s)\boldsymbol E_0(s)\left(r_2^{-1}(s)[\boldsymbol \Psi(\xi_n)]_1 + r_3^{-1}(s)[\boldsymbol \Psi(\xi_n)]_2\right)
\]
since \( r_2(s)r_3(s)=\rho_3(s) \). Using the expression for \( \boldsymbol E_0(z) \) from above as well as \eqref{N} and \eqref{Psin-jump} we get that
\begin{equation}
\label{QnatZero}
\gamma_n^{-1}Q_n(s) = \left(\begin{matrix} 1 & 0 \end{matrix}\right)\boldsymbol Z(s) \left(\begin{matrix} \Psi_{n+}^{(0)}(s) & \Psi_{n-}^{(0)}(s) \\ \Psi_{n-1+}^{(0)}(s)  & \Psi_{n-1-}^{(0)}(s)  \end{matrix}\right) \left( \begin{matrix} (2\xi_n)^{-\nu}A_\rho(\xi_n) \\ (2\ic\xi_n)^\nu B_\rho(\xi_n) \end{matrix} \right)
\end{equation}
for \( s\in\Delta_3\cap U_\delta \), where, since \(\zeta_0(s)\) has argument \( -\pi/4 \) for \( s\in\Delta_3 \), we set
\[
\left\{
\begin{array}{l}
A_\rho(\zeta) := e^{\zeta^2}\big(D_\nu(2\zeta) + \alpha_\rho D_{-\nu-1}(2\ic\zeta)\big) \medskip \\
B_\rho(\zeta) := e^{-\zeta^2} \big(D_{-\nu}(2\ic\zeta) + \beta_\rho D_{\nu-1}(2\zeta)\big)
\end{array}
\right.
\] 
with \( \alpha_\rho := -e^{\pi\ic\nu/2}d_1(r_2/r_3)(s)=d_1(\rho_4/\rho_2)(s) \), \( \beta_\rho := -e^{-\pi\ic\nu/2}d_2(\rho_2/\rho_4)(s) \) and \( d_1,d_2 \) given by \eqref{d1d2}, which are constants by the definition of \( \mathcal W_\infty \). Recall that \( \left(\begin{matrix} 1 & 0 \end{matrix}\right)\boldsymbol Z(s) \), the first row of \( \boldsymbol Z(s) \), behaves like \( \left(\begin{matrix} 1 +o(1) & o(1) \end{matrix}\right) \), where \( o(1)=\mathcal O(n^{|\re(\nu)|-1/2}) \), in the considered case. Therefore, by multiplying \eqref{QnatZero} out, we can get an asymptotic expression for \( Q_n(s) \) around the origin on \( \Delta_3 \). Clearly, we can get similar expressions on the remaining arcs \( \Delta_1, \Delta_2 \) and \( \Delta_4 \). 

A computation along these lines can be performed in the case \( \re(\nu)=1/2 \), but the resulting formula is even more involved than \eqref{QnatZero}.

\small

\end{document}